\newtheorem{thm}{Theorem}[section]
\newtheorem{cor}[thm]{Corollary}
\newtheorem{prop}[thm]{Proposition}
\newtheorem{lem}[thm]{Lemma}
\theoremstyle{definition}
\newtheorem{defn}[thm]{Definition}
\newtheorem{cnd}[thm]{Condition}
\newtheorem{rem}[thm]{Remark}
\newcommand{\thmref}[1]{Theorem~{\rm \ref{#1}}}
\newcommand{\lemref}[1]{Lemma~{\rm \ref{#1}}}
\newcommand{\cndref}[1]{Condition~{\rm \ref{#1}}}
\newcommand{\propref}[1]{Proposition~{\rm \ref{#1}}}
\newcommand{\defref}[1]{Definition~{\rm \ref{#1}}}
\newcommand{\remref}[1]{Remark~{\rm \ref{#1}}}
\newcommand{\apref}[1]{Appendix~{\rm \ref{#1}}}
\newcommand{\sectref}[1]{Section~{\rm \ref{#1}}}
\makeatletter \@addtoreset{equation}{section}
\def\R{\ensuremath {\mathbb R}}
\newcommand{\EE}{\mathbb E}
\newcommand{\PP}{\mathbb P}
\newcommand{\NN}{\mathbb N}
\newcommand{\RR}{\mathbb R}
\newcommand{\I}{{\mathcal I}}
\newcommand{\A}{{\mathcal A}}
\newcommand{\yzstar}{{y_0^*}}
\newcommand{\zzstar}{{z_0^*}}
\newcommand{\gn}{G_n}
\newcommand{\wdh}{\widehat}
\newcommand{\wdt}{\widetilde}
\title{ A Weak Convergence Approach to Inventory Control Using a Long-term Average Criterion\thanks{This research was supported in part by     the Simons Foundation (grant award numbers  246271 and  523736) and the NSFC (No. 11671034).}}
\author[1]{K.L. Helmes}
\author[2]{R.H. Stockbridge}
\author[2]{C. Zhu}
\affil[1]{\small Institute for Operations Research, Humboldt University of Berlin, Spandauer Str. 1, 10178, Berlin, Germany, {\tt helmes@wiwi.hu-berlin.de}}
\affil[2]{Department of Mathematical Sciences,   University of Wisconsin-Milwaukee,   Milwaukee, WI 53201,   USA,   {\tt stockbri@uwm.edu}, {\tt zhu@uwm.edu}}
\date{}
\begin{document}
\maketitle

\begin{abstract}
  This paper continues the examination of inventory control in which the inventory is modelled by a diffusion process and a long-term average cost criterion is used to make decisions.  The class of such models under consideration have general drift and diffusion coefficients and boundary points that are consistent with the notion that demand should tend to reduce the inventory level.  The conditions on the cost functions are greatly relaxed from those in \cite{helm:15b}.  Characterization of the cost of a general $(s,S)$ policy as a function of two variables naturally leads to a nonlinear optimization problem over the ordering levels $s$ and $S$.  Existence of an optimizing pair $(s_*,S_*)$ is established for these models under very weak conditions; non-existence of an optimizing pair is also discussed.  Using average expected occupation and ordering measures and weak convergence arguments, weak conditions are given for the optimality of the $(s_*,S_*)$ ordering policy in the general class of admissible policies.  The analysis involves an auxiliary function that is globally $C^2$ and which, together with the infimal cost, solves a particular system of linear equations and inequalities related to but different from the long-term average Hamilton-Jacobi-Bellman equation.  This approach provides an analytical solution to the problem rather than a solution involving intricate analysis of the stochastic processes.  The range of applicability of these results is illustrated on a drifted Brownian motion inventory model, both unconstrained and reflected, and on a geometric Brownian motion inventory model under two different cost structures.
  
  \vspace{5 mm}
\noindent
{\em MSC Classifications.}\/ 93E20, 90B05, 60H30
\vspace{3 mm}

\noindent
{\em Key words.}\/ inventory, impulse control, long-term average cost, general diffusion models, $(s,S)$ policies, weak convergence

\end{abstract}

\section{Introduction}
This paper further develops our examination of the long-term average cost criterion for inventory models of diffusion type in which the only control over the inventory levels is through the action of ordering additional stock; see \cite{helm:15b} for our previous investigation.  It identifies {\em weak}\/ sufficient conditions for optimality of an $(s,S)$ ordering policy in the {\em general class}\/ of admissible policies as well as providing some sufficient conditions for non-existence of an optimal $(s,S)$ policy. 

We model the inventory processes (in the absence of orders) as solutions to a stochastic differential equation
\begin{equation} \label{dyn}
dX_0(t) = \mu(X_0(t))\, dt + \sigma(X_0(t))\, dW(t), \qquad X_0(0) = x_0,
\end{equation}
taking values in an interval ${\mathcal I} = (a,b)$; negative values of $X_0(t)$ represent back-ordered inventory.  The detailed discussion in \cite{chen:10} indicates the validity of state-dependent diffusion models for inventory management.

An ordering policy $(\tau,Y)$ is a sequence of pairs $\{(\tau_k,Y_k): k \in \NN\}$ in which $\tau_k$ denotes the (random) time at which the $k^{th}$ order is placed and $Y_k$ denotes its size.  Since order $k+1$ cannot be placed before order $k$, $\{\tau_k: \in \NN\}$ is an increasing sequence of times.  The inventory level process $X$ resulting from an ordering policy $(\tau,Y)$ therefore satisfies the equation
\begin{equation} \label{controlled-dyn}
X(t) = x_0 + \int_0^t \mu(X(s))\, ds + \int_0^t \sigma(X(s))\, dW(s) + \sum_{k=1}^\infty I_{\{\tau_k \leq t\}} Y_k.
\end{equation}
Note the initial inventory level $X(0-)=x_0$ may be such that an order is placed at time $0$ resulting in a new inventory level at time $0$; this possibility occurs when $\tau_1 = 0$.  Also observe that $X(\tau_k-)$ is the inventory level just prior to the $k^{th}$ order being placed while $X(\tau_k)$ is the level with the new inventory.  Thus, this model assumes that orders are filled instantaneously.

Let $(\tau,Y)$ be an ordering policy and $X$ be the resulting inventory level process satisfying \eqref{controlled-dyn}.  Let $c_0$ and $c_1$ denote the holding/back-order cost rate and ordering cost functions, respectively.  We assume there is some constant $k_1 > 0$ such that $c_1 \geq k_1$; this constant represents the fixed cost for placing each order.  The long-term average expected holding/back-order plus ordering costs is
\begin{equation} \label{lta-obj-fn}
J_0(\tau,Y):= \limsup_{t\rightarrow \infty} t^{-1} \EE\left[\int_0^t c_0(X(s))\, ds + \sum_{k=1}^\infty I_{\{\tau_k \leq t\}} c_1(X(\tau_k-),X(\tau_k))\right].
\end{equation}
The goal is to identify an admissible ordering policy so as to minimize the cost.   

As mentioned earlier, we revisit the problem examined in \cite{helm:15b}.  We thus refer the reader to that paper for a discussion of the existing literature related to this problem; see also \cite{bens:11}.  The current manuscript relaxes assumptions imposed on $c_1$ such as the concavity requirement in \cite{yao:15}.  We focus the remainder of our comments on that which distinguishes this manuscript from our earlier publication.

The present paper differs from the published one in three significant respects: {\em solution approach, technical requirements}\/ and {\em generality of the class of admissible policies}.  Here, we employ weak convergence of average expected occupation and ordering measures to analyze the long-term average costs whereas \cite{helm:15b} relies heavily on intricate pathwise analysis of the stochastic inventory processes.  The benefit of the weak convergence approach lies in the ease of establishing the tightness of the expected {\em occupation}\/ measures and hence their sequential compactness.  This compactness ensures the existence of long-term average limiting measures that are key to the analysis of the costs.  Quite surprisingly, tightness is {\em not} required of the average expected ordering measures so there may not exist any limiting ordering measures.  Nevertheless, a limiting argument establishes optimality in the general class of admissible policies of an $(s_*,S_*)$ policy.  \apref{appendix-C} presents an example of an ordering policy having finite cost for which the average expected occupation measures are tight but the average expected ordering measures are not tight.

From a technical point of view, the weak convergence approach allows conditions on both cost functions $c_0$ and $c_1$ to be considerably relaxed by removing many structural requirements (compare \cndref{cost-cnds} below with Condition~2.3 of \cite{helm:15b}).  For example, the requirement that $c_0$ approaches $\infty$ at each boundary and the rather restrictive modularity condition (2.6) of that paper on $c_1$ are unnecessary.  In their places, we merely require that these functions be continuous, with $c_0$ continuous at the boundaries, an integrability condition on $c_0$ at the boundary $b$, as well as other boundedness relations contained in Conditions \ref{extra-cnd} and \ref{AG0-unif-int}.  

Most significantly, the two papers differ in how the auxiliary function is defined which, together with the infimal cost, form a solution of a particular system of linear equations and inequalities (see \propref{qvi-ish}) related to but different from  the long-term average Hamilton-Jacobi-Bellman equation.  In the present manuscript, the auxiliary function $G_0$ is a $C^2$ function defined globally on ${\cal I}$, contrasting with the function $G$ in the earlier one which $C^1$-smoothly pastes two functions at a significant point in ${\cal I}$.  Moreover, frequently in the literature, the  long-term average auxiliary function is obtained from the value functions of discounted problems using the method of vanishing discount.  The function $G_0$ in this paper is not obtained in this manner.

The functions $G$ and $G_0$ in the two papers play a central role in establishing general optimality of an $(s,S)$ ordering policy.  This manuscript provides an analytic solution for the general model that applies to all admissible policies.  In contrast, the earlier publication only shows optimality in a smaller class of ordering policies for the general models under a more restrictive set of conditions.  Optimality of $(s,S)$ policies for the general class of admissible policies for two examples from \cite{he:arxiv} and \cite{helm:15b} was established using ad hoc methods specifically tuned to the examples. 

The initial steps to the solution of the inventory control problem are similar to those in \cite{helm:15b}.  The next section briefly develops the model formulation, identifies two important functions and analyzes $(s,S)$ ordering policies with an emphasis on the differences between the manuscripts.  In particular, we concentrate on the more relaxed conditions in the current paper and we provide some new results that are required for the weak convergence analysis.  Some results are common to both papers; we refer to \cite{helm:15b} for those results whose proofs remain valid but we provide complete proofs in \apref{appendix-A} for \thmref{F-optimizers} since the previous proof is no longer valid for the general models of this paper.  \sectref{sect:form} is designed to briefly set some of the common foundations in the two papers so as to allow the reader to quickly access the new ideas and approaches in succeeding sections of this paper.  \sectref{sect:occ-meas} introduces the expected occupation and expected ordering measures and discusses their tightness (or not).    

The main contributions of the paper are contained in Sections \ref{sect-main result} and \ref{sect:optim}.  \sectref{sect-main result} defines a particular auxiliary function $G_0$ at the heart of our analytical approach and gives two important results related to $G_0$.  \sectref{sect:optim} then approximates $G_0$ by functions in a subclass and uses these approximations to establish optimality of the $(s_*,S_*)$ ordering policy in the general class of admissible policies.  Finally, \sectref{sect:examples} illustrates the ease of application of these results on examples involving a drifted Brownian motion, both unconstrained and with reflection at $\{0\}$, a geometric Brownian motion having two different cost structures and  Feller's branching diffusion process.  These examples are chosen to illustrate the more general applicability of this paper's results over those in \cite{helm:15b}.  

\section{Formulation and Preliminary Results} \label{sect:form}
This section briefly establishes the models under consideration, which are more general than those in \cite{helm:15b}.  The approach is very similar to the one taken in Sections 2 and 3 of that paper so we refer the reader to those sections for more details.  We emphasize the differences between the conditions.  Some proofs must be modified for the more general models of this paper.  Also some new results are required for our weak convergence arguments.  This overview will be kept brief so that the reader may reach the new ideas of this paper with a minimum of effort.  The reader may also find Chapter 15 of \cite{karl:81} to be a good reference for the boundary classifications of one-dimensional diffusions, especially for properties of the scale function and speed measure.

\subsection{Formulation}
Let ${\mathcal I} = (a,b) \subseteq \R$.  In the absence of ordering, the inventory process $X_0$ satisfies \eqref{dyn} and is a regular diffusion.  Throughout the paper we assume that the functions $\mu$ and $\sigma$ are continuous on $\I$, and that \eqref{dyn} is nondegenerate. The initial position of $X_0$ is taken to be $x_0$ for some $x_0 \in {\cal I}$.  Let $\{{\mathcal F}_t\}$ denote the filtration generated by $X_0$, augmented so that it satisfies the usual conditions.  We place the following assumptions on the underlying diffusion model.

\begin{cnd} \label{diff-cnd}
\begin{description}
\item[(a)] Both the speed measure $M$ and the scale function $S$ of the process $X_0$ are absolutely continuous with respect to Lebesgue measure.
\item[(b)] The left boundary $a$ is attracting and the right boundary $b$ is non-attracting.  Moreover, when $b$ is a natural boundary, $M[y,b) < \infty$ for each $y \in \I$.  The boundaries $a = -\infty$ and $b = \infty$ are required to be natural.
\end{description}
\end{cnd}

When $b$ is a natural boundary, the requirement that $M[y,b) < \infty$ for each $y\in \I$ is imposed in this paper as this is required for our general approach and does not hold in general.  Lemma~2.4 of \cite{helm:15b} shows that this condition follows from the more restrictive assumptions in that paper.

Associated with the scale function $S$ of \cndref{diff-cnd}, one can define the scale measure on the Borel sets of ${\mathcal I}$ by $S[y,z] = S(z) - S(y)$ for $[y,z] \subset {\mathcal I}$. 

From the modeling point of view, \cndref{diff-cnd}(b) is reasonable since it essentially says that, in the absence of ordering, demand tends to reduce the size of the inventory.  The  boundary point $a$ may be regular, exit or natural with $a$ being attainable in the first two cases and unattainable in the third.  In the case that $a$ is a regular boundary, its boundary behavior must also be specified as being either reflective or sticky.  The boundary point $b$ is either natural or entrance and is unattainable from the interior in both cases.  Following the approach in \cite{helm:15b}, we define the state space of possible inventory levels to be the interval ${\cal E}$ which excludes any natural boundary point and includes $a$ when it is attainable and $b$ when it is entrance.  Since orders typically increase the inventory level, define ${\cal R} = \{(y,z) \in {\cal E}^2: y < z\}$ in which $y$ denotes the pre-order and $z$ the post-order inventory levels, respectively.

Since we are using weak convergence methods for measures on ${\cal E}$ and ${\cal R}$, we will need their closures as well.
Define $\overline{\mathcal{E}}$ to be the {\em closure in $\RR$}\/ of $\mathcal{E}$; thus when a boundary is finite and natural, it is not an element of $\mathcal{E}$ but is in $\overline{\mathcal{E}}$.  Note $\pm \infty \notin \overline{\cal E}$.  Also set $\overline{\cal R} = \{(y,z) \in {\cal E}^2: y \leq z\}$; in contrast to ${\cal R}$, the set $\overline{\cal R}$ includes orders of size $0$.  Notice the subtle distinction between $\overline{\cal E}$ which includes boundaries that are finite and natural and $\overline{\cal R}$ which does not allow either coordinate to be such a point.  The reason for this distinction lies in the fact that the expected ordering measures are not required to be tight.  

We adopt the standard assumptions on the class ${\cal A}$ of admissible ordering policies; namely that $\{\tau_k:k\in \NN\}$ is an increasing sequence of $\{{\cal F}_t\}$-stopping times and for each $k \in \NN$, $Y_k$ is nonnegative, ${\cal F}_{\tau_k}$-measurable and satisfies $X(\tau_{k}) \in {\cal E}$.

Turning to the cost functions, we impose the following standing assumptions throughout the paper.
\begin{cnd} \label{cost-cnds}
\begin{itemize}
\item[(a)] The holding/back-order cost function $c_0: {\mathcal  I} \rightarrow \R^+$ is  continuous.  Moreover, at the boundaries 
$$\lim_{x\rightarrow a} c_0(x) =: c_0(a) \mbox{  exists in $\overline{\R^+}$ and } \lim_{x\rightarrow b} c_0(x) =: c_0(b) \mbox{  exists in $\overline{\R^+}$;}$$ 
we require $c_0(\pm\infty) = \infty$.  Finally, for each $y \in {\cal I}$,
\begin{equation} \label{c0-M-integrable}
\int_y^b c_0(v)\, \, dM(v)  < \infty.
\end{equation}
\item[(b)] The function $c_1:\overline{\cal R} \rightarrow \overline{\R^+}$ is in $C(\overline{\mathcal R})$ with $c_1 \geq k_1 > 0$ for some constant $k_1$. 
\end{itemize}
\end{cnd}

Again, since we will use weak convergence arguments for measures on $\overline{\cal E}$, we require $c_0$ to be continuous at the boundaries, even when they are natural.  The models in \cite{helm:15b} impose the stricter condition that $c_0(a)=c_0(b)=\infty$.  The condition \eqref{c0-M-integrable} is required in both papers but \cite{helm:15b} also imposes $\int_y^b \int_u^b c_0(v)\, dM(v)\, dS(u) = \infty$ for each $y\in {\cal I}$.  Thus, with the view for applications, the current paper allows a more flexible class of functions $c_0$.  Both papers assume $c_1$ is continuous and strictly bounded away from $0$.  \cite{helm:15b} also imposes on $c_1$ monotonicity ($c_1(y,z) \geq c_1(z,z)$ for $y \leq z$) and (restrictive) modularity ($c_1(w,z) + c_1(x,y) = c_1(w,y) + c_1 (x,z)$ for $w\leq x \leq y \leq z$). 

Both papers assume $c_0$ is inf-compact; that is, $\{x\in \overline{\cal E}: c_0(x) \leq k\}$ is compact for each $k \geq 0$.  Though this is not explicitly stated in \cndref{cost-cnds}(a), it is immediate when $c_0(a)=\infty$ and $c_0(b)=\infty$.  When $c_0(a) < \infty$ and $c_0(b) < \infty$, $a$ and $b$ must be finite.  Continuity of $c_0$, including at the boundaries, then establishes that $\{x\in \overline{\cal E}: c_0(x) \leq k\} \cap [a,b]$ is compact for every $k \geq 0$.

The generator of the process $X$ between jumps (corresponding to the diffusion $X_0$) is
$Af = \mbox{$\frac{\sigma^2}{2}$} f'' + \mu f'$, which is defined for all $f \in C^2({\mathcal I})$ or equivalently, $Af = \frac{1}{2} \frac{d~}{dM} \left(\frac{df}{dS}\right)$.   To capture the effect ordering has on the inventory process, define the jump operator $B: C({\cal E}) \rightarrow C(\overline{\mathcal{R}})$ by $Bf(y,z) = f(z) - f(y)$ for $(y,z) \in \overline{\cal R}.$

\subsection{Important functions}

As in \cite{helm:15b}, two functions play a central role in our search for an optimal ordering policy.  Using the initial position $x_0\in \mathcal{I}$, define the functions $g_0$ and $\zeta$ on $\I$ by
\begin{align} \label{g0-fn}
  g_{0}(x) := \int_{x_0}^{x} \int_{u}^{b} 2 c_{0}(v)\, dM(v)\, dS(u) , \qquad \text{and} \qquad
\zeta(x) := \int_{x_0}^{x} \int_{u}^{b} 2\, dM(v)\, dS(u)
\end{align}
and extend these functions to $\overline{\cal E}$ by continuity.  Observe that both $g_0$ and $\zeta$ are negative on $(a,x_0)$ and positive on $(x_0,b)$; also $g_0$ may take values $\pm \infty$ at the boundaries.  The definitions of $g_0$ and $\zeta$ differ slightly from the similar functions in \cite{helm:15b} in that we specify the lower limit of the outer integrals to be the initial point $x_0$.  Using the second characterization of $A$, it immediately follows that $g_0$ and $\zeta$, respectively, are particular solutions on $\mathcal{I}$ of
$$\left\{\begin{array}{l}
Af = - c_0, \\
f(x_0) = 0,
\end{array} \right. \quad \mbox{ and } \quad \left\{\begin{array}{l}
Af = -1, \\
f(x_0) = 0.
\end{array}\right.$$
Other solutions to these differential equations having value $0$ at $x_0$ include summands of the form $K(S(x) - S(x_0))$, $K \in \RR$, since the constant function and the scale function $S$ are linearly independent solutions of the homogeneous equation $Af = 0$.  However, such additional terms grow too quickly near the boundary $b$ so that a transversality condition in \propref{sS-in-Q2} fails (see \remref{rem-ho-homogeneous-solns}) and therefore the definitions of $g_0$ and $\zeta$ in \eqref{g0-fn} 
   exclude these terms.

To gain some intuition for the functions $g_0$ and $\zeta$, let $(y,z) \in {\cal R}$ and let $X_0$ satisfy \eqref{dyn} with $X_0(0) = z$.  Define $\tau_y = \inf\{t \geq 0: X_0(t) = y\}$.  Then
$$\EE_z\left[\int_0^{\tau_y} c_0(X_0(s))\, ds\right] = g_0(z) - g_0(y) \quad \mbox{and}\quad \EE_z[\tau_y] = \zeta(z) - \zeta(y).$$
This result is contained in Proposition~2.6 of \cite{helm:15b}, whose proof only requires \cndref{diff-cnd}(b) and \eqref{c0-M-integrable} of \cndref{cost-cnds}(a) in this paper.  Thus these relationships continue to hold for the models in this paper.

The proof of Proposition 3.5 in \cite[pp. 1848-9]{helm:15b} establishes that $\lim_{y\rightarrow a} \zeta(y) > -\infty$ when $a$ is attainable (regular or exit) and $\lim_{y\rightarrow a} \zeta(y) = -\infty$ when $a$ is unattainable (natural).  The proof remains valid for the models in this paper.

We now provide the first new result that arises from the possibility of finite values of $c_0(a)$ and $c_0(b)$.  Its technical proof is given in \apref{appendix-A}.   
\begin{lem} \label{g0-at-a}
Assume \cndref{diff-cnd}.  Suppose $a$ and $b$ are natural boundaries and let $c_0(a)$ and $c_0(b)$ be as in \cndref{cost-cnds}(a).  Then the following asymptotic behaviours hold:
\begin{align} \label{z-fixed-g0-zeta-diff-at-a}
& \lim_{y\rightarrow a} \frac{g_0(z) - g_0(y)}{\zeta(z) - \zeta(y)} = c_0(a), \quad \forall z\in \mathcal{I}; \quad & & \lim_{z\rightarrow b} \frac{g_0(z) - g_0(y)}{\zeta(z) - \zeta(y)}  = c_0(b), \quad \forall y\in \mathcal{I}; & \\
\label{double-g0-zeta-diff-at-a}
& \lim_{(y,z)\rightarrow (a,a)} \frac{g_0(z) - g_0(y)}{\zeta(z) - \zeta(y)} = c_0(a); \quad & & \lim_{(y,z) \rightarrow (b,b)} \frac{g_0(z) - g_0(y)}{\zeta(z) - \zeta(y)}   = c_0(b); & \\
\label{g0-zeta-ratio-at-a}
& \lim_{y\rightarrow a} \frac{g_0(y)}{\zeta(y)} = c_0(a); \quad & &  \lim_{z\rightarrow b} \frac{g_0(z)}{\zeta(z)}   = c_0(b),& 
\end{align}
implying $\lim_{y\rightarrow a} g_0(y) = -\infty$ when $c_0(a) > 0$ and $\lim_{z\rightarrow b} g_0(z) = \infty$ when $c_0(b) > 0$.
\end{lem}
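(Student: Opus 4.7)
I would prove all three parts by reducing them to a single pointwise statement about the weighted average $F/G$, where $F(u):=\int_u^b c_0(v)\,dM(v)$ and $G(u):=M[u,b)$, so that
$$g_0(z)-g_0(y)=\int_y^z 2F(u)\,dS(u), \qquad \zeta(z)-\zeta(y)=\int_y^z 2G(u)\,dS(u).$$
Since $g_0(x_0)=\zeta(x_0)=0$, the ratios in \eqref{g0-zeta-ratio-at-a} are literally \eqref{z-fixed-g0-zeta-diff-at-a} specialised to $z=x_0$ (resp.\ $y=x_0$), so no separate argument is needed for them; and the asserted blow-up $g_0\to\pm\infty$ drops out of \eqref{g0-zeta-ratio-at-a} combined with \lemref{zeta-at-a} and its symmetric companion at $b$ (proved by the same splitting mechanism as \lemref{zeta-at-a}), since a positive ratio limit against a divergent denominator forces a divergent numerator of the correct sign.

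The technical heart of the plan is the pointwise claim that $F(u)/G(u)\to c_0(a)$ as $u\to a$ and $F(u)/G(u)\to c_0(b)$ as $u\to b$. At $b$ this is immediate: the weights $dM(v)/M[u,b)$ are supported on $[u,b)$, which shrinks to $\{b\}$, and continuity of $c_0$ with value $c_0(b)$ supplies the average. At $a$ I would split $F(u)$ and $G(u)$ at some $a+\delta$; the tail pieces $\int_{a+\delta}^b c_0\,dM$ and $M[a+\delta,b)$ are finite constants, while on the near-$a$ piece I need $M[u,a+\delta]\to\infty$ as $u\to a$. This blow-up is extracted from the combination of $a$ natural (so $\Sigma(a)=\int_a^{a+\delta}M[u,a+\delta]\,dS(u)=\infty$) with $S(a,a+\delta]<\infty$ (attracting), exactly as in the proof of \lemref{zeta-at-a}. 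Continuity of $c_0$ on $(a,a+\delta)$ then traps the remaining quotient within any prescribed $\epsilon$-neighbourhood of $c_0(a)$; the edge cases $c_0(a)\in\{0,\infty\}$ are handled by the obvious one-sided bounds, and symmetrically for $c_0(b)$.

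The integrated claims follow mechanically from these pointwise bounds. For the double limit \eqref{double-g0-zeta-diff-at-a}, fix $\epsilon>0$, choose $\delta$ so that $(c_0(a)-\epsilon)G\le F\le(c_0(a)+\epsilon)G$ on $(a,a+\delta)$, and integrate against $dS$ over any sub-interval $(y,z)\subset(a,a+\delta)$; the same inequality passes directly to $(g_0(z)-g_0(y))/(\zeta(z)-\zeta(y))$, and the $b$-side is identical. For the one-sided limit in \eqref{z-fixed-g0-zeta-diff-at-a} with $z$ fixed and $y\to a$, I would split the integrals at $a+\delta$: on $(y,a+\delta)$ the pointwise bounds apply, while the $(a+\delta,z)$ contributions are fixed constants that become negligible once $\int_y^{a+\delta}G(u)\,dS(u)\to\infty$, which is the same divergence that drives \lemref{zeta-at-a}. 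The $z\to b$ variant is symmetric. The principal obstacle is the very first step---extracting the pointwise blow-up $M[u,a+\delta]\to\infty$ cleanly from naturality plus attraction at $a$ and verifying that it swallows the fixed tail contributions uniformly enough to sustain the $\epsilon$-control; once that is in hand, all the remaining limits reduce to elementary integration of pointwise bounds together with the identity $g_0(x_0)=\zeta(x_0)=0$.
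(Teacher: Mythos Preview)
Your proposal is correct and uses essentially the same ingredients as the paper---continuity of $c_0$ at the boundaries, the divergence $M(a,x]=\infty$ (extracted from $a$ natural plus attracting), and $\zeta\to\pm\infty$ at natural boundaries---but organizes them differently. You first isolate a pointwise ``derivative ratio'' lemma, namely $F(u)/G(u)\to c_0(a)$ (resp.\ $c_0(b)$), and then obtain all three displays by integrating these bounds against $dS$; the paper instead manipulates the double integrals $\int_y^z\int_u^b(\cdot)\,dM\,dS$ directly, establishes \eqref{g0-zeta-ratio-at-a} first (with a case split according to whether $g_0(y)$ stays bounded or diverges), and only then derives \eqref{z-fixed-g0-zeta-diff-at-a}. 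Your route is a bit cleaner: it removes the boundedness case split entirely, and your observation that \eqref{g0-zeta-ratio-at-a} is literally \eqref{z-fixed-g0-zeta-diff-at-a} at $z=x_0$ (resp.\ $y=x_0$) is a nice simplification that the paper does not exploit. What the paper's ordering buys is that it never needs to state the intermediate pointwise claim about $F/G$; either way the analytic content is the same.
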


\subsection{Analysis of $(s,S)$ Ordering Policies}\label{sect-sS-policy}
Both this paper and \cite{helm:15b} rely on characterizing the long-term average cost for $(s,S)$ ordering policies.  For $(y,z) \in {\cal R}$, define the $(y,z)$ ordering policy $(\tau,Y)$ such that $\tau_0=0$ and
\begin{equation} \label{sS-tau-def}
\tau_k = \inf\{t > \tau_{k-1}: X(t-) \leq y\}, \quad \mbox{ and } \quad Y_k = z-X(\tau_k-),
\quad  k \geq 1,
\end{equation}
in which $X$ is the inventory level process satisfying \eqref{controlled-dyn} with this ordering policy.  The above definition of $\tau_k$ must be slightly modified when $k=1$ to be $\tau_1 = \inf\{t \geq 0: X(t-) \leq y\}$ to allow for the first jump to occur at time $0$ when $x_0 \leq y$.

Theorem~2.1 of \cite{sigm:93} provides existence and uniqueness of the stationary distribution for the process $X$ arising from a $(y,z)$ ordering policy for any $y,z \in {\cal E}$ with $y < z$ and moreover the one-dimensional distributions $\PP(X(t) \in \cdot)$ converge weakly to the stationary distribution as $t$ tends to infinity.  Proposition~3.1 of \cite{helm:15b} derives the density $\pi$ of the stationary distribution for $X$ and the long-run frequency $\kappa = \frac{1}{B\zeta(y,z)}$ of orders.  Its proof remains valid for the models in this paper.

Proposition~3.4 of the same paper uses a renewal argument to characterize the long-term average cost $J_0(\tau,Y)$ for the $(y,z)$ ordering policy in \eqref{sS-tau-def}:  
\begin{equation}\label{eq-c0-integral-slln}
 \lim_{t\rightarrow \infty} \frac{1}{t} \int_{0}^{t} c_{0}(X(s))ds = \frac{Bg_0(y,z)}{B\zeta(y,z)} \;\;(a.s. \mbox{ and in } L^1)
\end{equation} 
and therefore 
\begin{equation}
\label{eq-sS-cost}
J_0(\tau, Y) = \frac{c_{1}(y,z) + Bg_{0}(y,z)}{B\zeta(y,z)}.
\end{equation}
The $L^1$ convergence is given in the proof of Proposition~3.4  though it is not stated directly in the proposition.  The proposition applies to models in which $c_0$ is asymptotically infinite at both boundaries and $(y,z)$ is in the interior of ${\cal R}$.  These conditions are relaxed in the current paper so $c_0$ may be finite at a boundary.  The proof remains valid whenever $y=a>-\infty$ is attainable or $z=b<\infty$ is an entrance point (in which cases $a,b \in \mathcal{E}$).  Note that when $b$ is an entrance boundary $g_0(b)$ is allowed to be finite or infinite.

Motivated by \eqref{eq-sS-cost}, define the function $F_0: \overline{\cal R} \rightarrow \overline{\RR^+}$ by
\begin{equation} \label{eq-F-fn}
F_0(y,z) := \left\{\begin{array}{cl}\displaystyle 
\frac{c_1(y,z) + Bg_0 (y,z)} {B\zeta(y,z)}, & \quad (y,z) \in {\mathcal R}, \rule[-15pt]{0pt}{15pt}\\
\infty, & \quad  (y,z) \in \overline{\cal R} \text{ with } y=z.
\end{array} \right.
\end{equation}
Observe that $F_0$ is well-defined and continuous on $\overline{\mathcal{R}}$.  $F_0$ is the same function as $F$ in \cite{helm:15b}, though its definition here is explicit on the boundary $y=z$ representing orders of size $0$.

The goal is to optimize $F_0$.  Since $c_1 > 0$, $F_0(y,z) > 0$ for every $(y,z) \in \overline{\mathcal{R}}$ and thus $\inf_{(y,z)\in \overline{\mathcal{R}}} F_0(y,z) =:F_0^* \geq 0$.  The models in this paper allow $F_0^* = 0$ in which case it immediately follows that there is no minimizing pair $(\yzstar,\zzstar)$ of $F_0$.  A new proposition gives some sufficient conditions under which this occurs.

\begin{prop} \label{no-optimizers}
Assume Conditions \ref{diff-cnd} and \ref{cost-cnds} hold.  Also assume any of the following conditions:
\begin{description}
\item[(a)] The point $a>-\infty$ is a natural boundary and either (i) or (ii) holds:
\begin{description}
\item[(i)] for some $z \in \mathcal{E}$, $\liminf_{y \rightarrow a} \frac{c_1(y,z) - g_0(y)}{\zeta(y)} = 0$;
\item[(ii)]$c_0(a) = 0$ and 
\begin{equation} \label{asymptotic-order-costa}
\liminf_{(y,z) \rightarrow (a,a)} \frac{c_1(y,z)}{\zeta(z) - \zeta(y)} = 0.
\end{equation}
\end{description}
\item[(b)] The point $b<\infty$ is a natural boundary and and either (i) or (ii) holds:
\begin{description}
\item[(i)] for some $y \in \mathcal{E}$, $\liminf_{z\rightarrow b} \frac{c_1(y,z) + g_0(z)}{\zeta(z)} = 0$;
\item[(ii)] $c_0(b) = 0$ and $\liminf_{(y,z) \rightarrow (b,b)} \frac{c_1(y,z)}{\zeta(z) - \zeta(y)} = 0.$
\end{description}
\end{description} 
Then $F_0^* = 0$ and there does not exist any pair $(\yzstar,\zzstar) \in {\cal R}$ which minimizes $F_0$.
\end{prop}

\begin{proof}
The arguments in cases (a) and (b) are essentially the same so we establish the result when (a) holds.  Since $a$ is a natural boundary, $\zeta(y)\rightarrow -\infty$ as $y \rightarrow a$.  Thus under the hypothesis in (a,i), for some $z\in \mathcal{E}$, 
$$0 = \liminf_{y \rightarrow a} \frac{c_1(y,z) - g_0(y)}{\zeta(y)} = \liminf_{y \rightarrow a} \frac{c_1(y,z) + g_0(z) - g_0(y)}{\zeta(z) - \zeta(y)} = \liminf_{y\rightarrow a} F_0(y,z).$$
Since $F_0(y,z) > 0$ for all $(y,z)\in \mathcal{R}$, the result follows.  If (a,ii) holds, then combining \eqref{asymptotic-order-costa} with \eqref{double-g0-zeta-diff-at-a} of \lemref{g0-at-a} establishes that $F_0^* = 0$ and, again, the result holds. 
\end{proof}

As in \cite{helm:15b}, the main results depend on the existence of an optimizing pair $(\yzstar,\zzstar)\in {\cal R}$ of $F_0$.  Central to the proof of this existence in Proposition~3.5 of \cite{helm:15b} is the assumption $c_0(a)=c_0(b)=\infty$.  The relaxed assumptions on $c_0$ in \cndref{cost-cnds} allow us to identify a weaker set of conditions for the existence of an optimizing pair.  (\sectref{sect:examples} illustrates models which satisfy these conditions but which fail to satisfy the more restrictive conditions in \cite{helm:15b}.) 
\begin{cnd} \label{extra-cnd}  
The following conditions hold: 
\begin{description}
\item[(a)] The boundary $a$ is regular; or exit; or $a$ is a natural boundary for which either (i) or (ii) hold:
\begin{itemize}
\item[(i)] $c_0(a)=\infty$; 
\item[(ii)] $c_0(a) < \infty$, the function $F_0(\cdot, z)$ is strictly decreasing in a neighborhood of $a$ for each $z\in \mathcal E$ and there exists some $(\wdh{y},\wdh{z}) \in \mathcal{R}$ such that $F_0(\wdh{y},\wdh{z}) < c_0(a)$.
\end{itemize}
\item[(b)] The boundary $b$ is entrance; or $b$ is natural for which either (i) or (ii) hold:
\begin{itemize}
\item[(i)] $c_0(b)=\infty$;  
\item[(ii)] $c_0(b) < \infty$, $F_0(y, \cdot)$ is strictly increasing in a neighborhood of $b$ for every $y\in \mathcal E$ and there exists some $(\widetilde{y},\widetilde{z}) \in \mathcal{R}$ such that $F_0(\widetilde{y},\widetilde{z}) < c_0(b)$.
\end{itemize}
\end{description}
\end{cnd}

Observe that the conditions $F_0(\wdh{y},\wdh{z}) < c_0(a)$ and $F_0(\wdt{y},\wdt{z}) < c_0(b)$ imply that $c_0(a) > F_0^*$ and similarly, $c_0(b) > F_0^*$.  

A sufficient condition for the monotonicity of $F_0(\cdot,z)$ in \cndref{extra-cnd}(a,ii) is that  for each $z \in \mathcal{E}\backslash\{a\}$, there exists some $y_z > a$ such that $c_1(\cdot,z)$ is differentiable in the interval $(a,y_z)$ and  
\begin{equation} \label{F-decr-at-a} 
\frac{-\frac{\partial c_1}{\partial y}(y,z) +g_0'(y)}{\zeta'(y)} > F_0(y,z).
\end{equation}
A similar sufficient condition for the monotonicity of $F_0$ in \cndref{extra-cnd}(b,ii) can be formulated.

The inequality \eqref{F-decr-at-a}  has an economic interpretation.  For simplicity, assume $(a,b) \subseteq (0,\infty)$. The function $F_0$ on ${\cal R}$ represents the long-term average cost \eqref{eq-sS-cost} for each $(y,z)$ ordering policy as the ratio of the expected cost $c_1(y,z) + g_0(z) - g_0(y)$ over a cycle to the expected cycle length $\zeta(z) - \zeta(y)$ for the $(y,z)$ ordering policy.  

Observe that $\zeta$ is a strictly increasing function so the denominator of \eqref{F-decr-at-a} is positive.  Thus using the definition of $F_0$ in \eqref{eq-F-fn}, the inequality \eqref{F-decr-at-a} can equivalently be written as
$$\frac{\frac{\partial}{\partial y}[c_1(y,z) + g_0(z) - g_0(y)]}{c_1(y,z) + g_0(z) - g_0(y)} < \frac{\frac{\partial}{\partial y}[\zeta(z)-\zeta(y)]}{\zeta(z)-\zeta(y)}$$
and multiplying by $z$ on both sides preserves the inequality.  Thus this sufficient condition says that for each $y\in \mathcal{E}$, there exists some ($y$-dependent) neighbourhood of $a$ on which the expected cost over a cycle {\em is less elastic}\/ than the expected cycle length (with respect to variations of $z$).  The similar analysis for the boundary $b$ using \cndref{extra-cnd}(b) results in the reverse inequality and more elasticity.

The key result of this section is the existence of a minimizing pair $(\yzstar,\zzstar)$.  This improves the result in Proposition~3.5 of \cite{helm:15b}; these relaxed conditions differ significantly enough that it is necessary to provide a careful proof.  The proof is given in \apref{appendix-A}.

\begin{thm} \label{F-optimizers}
Assume Conditions \ref{diff-cnd}, \ref{cost-cnds} and \ref{extra-cnd} hold.  Then  there exists a pair $(\yzstar,\zzstar) \in \mathcal R$ such that
\begin{equation} \label{eq-F-optimal}
F_0(\yzstar,\zzstar) = F_0^* = \inf\{F_0(y,z): (y,z) \in \overline{\mathcal R}\}.
\end{equation}
\end{thm}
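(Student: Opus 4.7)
The plan is to take a minimizing sequence $\{(y_n, z_n)\} \subset \overline{\mathcal{R}}$ with $F_0(y_n, z_n) \downarrow F_0^*$, extract a subsequential limit $(\yzstar, \zzstar) \in [a, b]^2$, and use Condition~\ref{extra-cnd} together with \lemref{g0-at-a} to show this limit lies in the open region $\mathcal{R}$; continuity of $F_0$ on $\mathcal{R}$ then delivers $F_0(\yzstar, \zzstar) = F_0^*$. Since $F_0$ is finite on $\mathcal{R}$, $F_0^* < \infty$, and three degenerate limits must be excluded: (i) $\yzstar = \zzstar$, (ii) $\yzstar = a$ with $a$ a natural boundary, and (iii) $\zzstar = b$ with $b$ a natural boundary.

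The diagonal case is immediate: if $\yzstar = \zzstar =: w \in \mathcal{E}$, continuity of $c_1$ yields $c_1(y_n, z_n) \to c_1(w, w) \geq k_1 > 0$, while $B\zeta(y_n, z_n) \to 0$ and $Bg_0(y_n, z_n) \to 0$, so $F_0(y_n, z_n) \to \infty$, contradicting $F_0(y_n, z_n) \to F_0^*$. To exclude $\yzstar = a$ with $a$ natural, use the identity
\begin{equation*}
F_0(y_n, z_n) = \frac{c_1(y_n, z_n)}{\zeta(z_n) - \zeta(y_n)} + \alpha_n \cdot \frac{g_0(z_n)}{\zeta(z_n)} + \beta_n \cdot \frac{g_0(y_n)}{\zeta(y_n)},
\end{equation*}
with $\alpha_n := \zeta(z_n)/(\zeta(z_n) - \zeta(y_n))$ and $\beta_n := -\zeta(y_n)/(\zeta(z_n) - \zeta(y_n))$ satisfying $\alpha_n + \beta_n = 1$. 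By \lemref{zeta-at-a}, $\zeta(y_n) \to -\infty$, so $\alpha_n \to 0$ and $\beta_n \to 1$ whenever $\zzstar \in \mathcal{E}$, and $(\alpha_n, \beta_n) \to (\alpha, \beta) \in [0,1]^2$ with $\alpha + \beta = 1$ when $\zzstar = b$ is natural; the first summand tends to $0$ since its denominator diverges. Invoking \lemref{g0-at-a}, $g_0(y_n)/\zeta(y_n) \to c_0(a)$ and, when $\zzstar = b$ is natural, $g_0(z_n)/\zeta(z_n) \to c_0(b)$. Hence $\lim F_0(y_n, z_n)$ equals $c_0(a)$ when $\zzstar \in \mathcal{E}$ and the convex combination $\alpha\, c_0(b) + \beta\, c_0(a)$ when $\zzstar = b$ is natural. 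Under \cndref{extra-cnd}(a)(i), $c_0(a) = \infty$ forces the limit to $\infty$; under \cndref{extra-cnd}(a)(ii), \eqref{a-not-optimal} gives $c_0(a) > F_0^*$, and, paired with \eqref{b-not-optimal} in the joint-boundary subcase, the convex combination strictly exceeds $F_0^*$. Either subcase contradicts $F_0(y_n, z_n) \to F_0^*$, so $\yzstar \neq a$. A symmetric argument rules out $\zzstar = b$.

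With all three degenerate limits excluded, $(\yzstar, \zzstar) \in \mathcal{R}$, and continuity of $F_0$ at this interior point gives $F_0(\yzstar, \zzstar) = \lim F_0(y_n, z_n) = F_0^*$. The most delicate step is the joint boundary limit $y_n \to a$ and $z_n \to b$, where both $\alpha_n$ and $\beta_n$ may possess nontrivial limits in $(0, 1)$; here one leans critically on the strict inequalities $F_0^* < c_0(a)$ and $F_0^* < c_0(b)$ supplied by \eqref{a-not-optimal} and \eqref{b-not-optimal} to push the convex combination strictly above $F_0^*$. The derivative inequalities \eqref{F-decr-at-a} and \eqref{F-incr-at-b} offer a complementary local viewpoint: they show that $F_0$ is strictly decreasing in $y$ near $a$ and strictly increasing in $z$ near $b$, so any candidate minimizer too close to a natural boundary can be strictly improved by moving inward, reinforcing that no minimizing sequence can accumulate there.
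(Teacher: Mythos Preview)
Your overall strategy---take a minimizing sequence, pass to a subsequential limit in $[a,b]^2$, and rule out all limit points outside $\mathcal R$---is the same as the paper's, and your treatment of the interior diagonal ($w\in\mathcal E$) and of the single-boundary cases $y_n\to a$ natural with $z_n\to z^*\in\mathcal E$ (respectively $z_n\to b$ natural with $y_n\to y^*\in\mathcal E$) is correct. One interesting feature of your write-up is that the main argument uses only \eqref{a-not-optimal} and \eqref{b-not-optimal} from \cndref{extra-cnd}(a,ii) and (b,ii), whereas the paper in its Cases~ii and~iii uses the derivative inequalities \eqref{F-decr-at-a} and \eqref{F-incr-at-b} to show that $F_0(\cdot,z)$ is decreasing near $a$ and $F_0(y,\cdot)$ is increasing near $b$. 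Your route via the ratio limit $Bg_0/B\zeta\to c_0(a)$ is a legitimate alternative.

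There is, however, a genuine gap: the corner limits $(y_n,z_n)\to(a,a)$ and $(y_n,z_n)\to(b,b)$ with $a$ (resp.~$b$) natural are not covered. Your diagonal case~(i) explicitly assumes $w\in\mathcal E$, which excludes these corners, and your case~(ii) splits only into the sub-cases $z^*\in\mathcal E$ and $z^*=b$ natural, omitting $z^*=a$ natural. At $(a,a)$ your $\alpha_n,\beta_n$ decomposition breaks down: with $z_n<x_0$ one has $\zeta(z_n)<0$, so $\alpha_n<0$ and $\beta_n>1$, and the ``convex combination'' interpretation fails. The fix is to bypass the decomposition and invoke \eqref{double-g0-zeta-diff-at-a} of \lemref{g0-at-a} directly, which gives $\liminf F_0(y_n,z_n)\ge\liminf Bg_0(y_n,z_n)/B\zeta(y_n,z_n)=c_0(a)>F_0^*$; this is exactly the paper's Cases~iv and~v.

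A second, smaller imprecision occurs at the $(a,b)$ corner when $c_0(a)=\infty$ or $c_0(b)=\infty$: the assertion that the limit is ``the convex combination $\alpha\,c_0(b)+\beta\,c_0(a)$'' is not well-defined when one factor is infinite and its weight tends to zero (a $0\cdot\infty$ indeterminacy). The remedy is to argue with lower bounds rather than exact limits: since eventually $\alpha_n,\beta_n\in(0,1)$ with $\alpha_n+\beta_n=1$, and both ratios $g_0(z_n)/\zeta(z_n)$ and $g_0(y_n)/\zeta(y_n)$ are eventually at least $(c_0(a)\wedge c_0(b))-\epsilon$, one gets $\liminf Bg_0/B\zeta\ge c_0(a)\wedge c_0(b)>F_0^*$. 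The paper's Case~vi carries out essentially this estimate. Finally, the claim that $c_1(y_n,z_n)/(\zeta(z_n)-\zeta(y_n))\to 0$ is unjustified since $c_1$ need not be bounded; but this is harmless because you only need the lower bound $F_0\ge Bg_0/B\zeta$, so simply drop that term.
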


An immediate corollary is that when the optimization is restricted to the class of $(y,z)$ ordering policies, the minimal cost is achieved by the $(\yzstar,\zzstar)$ ordering policy and has an optimal value of $F_0^*$.

\begin{cor}
Assume Conditions \ref{diff-cnd}, \ref{cost-cnds} and \ref{extra-cnd} hold and let $(\yzstar,\zzstar)$ denote a minimizing pair for the function $F$.  Then the ordering policy $(\tau^*,Y^*)$  defined using  $(\yzstar,\zzstar)$ in \eqref{sS-tau-def} is optimal in the class of all $(s,S)$ ordering policies with corresponding optimal value of $F_0^*=F_0(\yzstar,\zzstar) = J_0(\tau^*,Y^*)$.
\end{cor}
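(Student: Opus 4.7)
The plan is simply to chain together the two main results immediately preceding the corollary. \propref{prop-sS-cost} computes, for each $(y,z) \in \mathcal{R}$, the long-term average cost of the associated $(y,z)$ ordering policy \eqref{sS-tau-def} as $J_0(\tau,Y) = F_0(y,z)$; meanwhile \thmref{F-optimizers} guarantees, under Conditions \ref{diff-cnd}--\ref{extra-cnd}, the existence of a minimizer $(\yzstar,\zzstar) \in \mathcal{R}$ of $F_0$ on $\overline{\mathcal{R}}$. So the corollary should fall out as a one-line synthesis.

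Concretely, I would first take an arbitrary $(s,S)$ ordering policy; by definition it coincides with the policy \eqref{sS-tau-def} for some $(y,z) \in \mathcal{R}$, so \propref{prop-sS-cost} gives $J_0(\tau,Y) = F_0(y,z) \geq F_0^*$. Then I would specialize to $(y,z) = (\yzstar,\zzstar)$ and apply \propref{prop-sS-cost} again to get $J_0(\tau^*, Y^*) = F_0(\yzstar,\zzstar)$, which by \thmref{F-optimizers} equals $F_0^*$. Combining the two inequalities $J_0(\tau^*,Y^*) = F_0^* \leq F_0(y,z) = J_0(\tau,Y)$ yields optimality of $(\tau^*, Y^*)$ within the $(s,S)$ class, together with the claimed identity $F_0^* = F_0(\yzstar,\zzstar) = J_0(\tau^*, Y^*)$.

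The only item worth checking is that the hypothesis of \propref{prop-sS-cost} — namely $g_0(a) > -\infty$ when $a$ is regular or exit, and $g_0(b) < \infty$ when $b$ is entrance — holds under Conditions \ref{diff-cnd}--\ref{extra-cnd}. But this follows directly from the defining formula \eqref{g0-fn} together with the integrability bound \eqref{c0-M-integrable} and the finiteness of $S$ on attainable boundaries (\cndref{diff-cnd}(b)), with continuity of $c_0$ at an entrance boundary $b \in \mathcal{E}$ handling the corresponding endpoint; this is a bookkeeping check, not a substantive step. Hence there is no genuine obstacle — the corollary is, in the strictest sense, a corollary.
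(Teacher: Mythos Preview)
Your proposal is correct and mirrors the paper's approach exactly: the paper states the corollary immediately after noting that ``Combining the results in \propref{prop-sS-cost} and \thmref{F-optimizers} establishes that \ldots\ the minimal cost is achieved by the $(\yzstar,\zzstar)$ ordering policy and has an optimal value of $F_0^*$,'' which is precisely the two-step synthesis you give. Your added remark on the finiteness hypotheses of \propref{prop-sS-cost} is more explicit than anything the paper writes, but the underlying argument is the same.
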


\section{Expected Occupation and Ordering Measures} \label{sect:occ-meas}
To establish general optimality of the $(\yzstar,\zzstar)$ policy, we apply weak convergence arguments with average expected occupation  and average expected ordering measures, which we now define.

For $(\tau,Y) \in \A$, let $X$ denote the resulting inventory level process satisfying \eqref{controlled-dyn}.  For each $t > 0$, define the average expected occupation measure $\mu_{0,t}$ on ${\cal E}$ and the average expected ordering measure $\mu_{1,t}$ on $\overline{\cal R}$ by
\begin{equation} \label{mus-t-def}
\begin{array}{rcll}
\mu_{0,t}(\Gamma_0) &=& \displaystyle \mbox{$\frac{1}{t}$} \EE\left[\int_0^t I_{\Gamma_0}(X(s))\, ds\right], & \quad \Gamma_0 \in {\cal B}({\cal E}), \rule[-15pt]{0pt}{15pt} \\
\mu_{1,t}(\Gamma_1)  &=& \displaystyle \mbox{$\frac{1}{t}$} \EE\left[\sum_{k=1}^\infty I_{\{\tau_k \leq t\}} I_{\Gamma_1}(X(\tau_k-),X(\tau_k))\right], & \quad \Gamma_1 \in {\cal B}(\overline{\cal R}).
\end{array}
\end{equation}
If $a$ is a reflecting boundary, define the average expected local time measure $\mu_{2,t}$ for each $t > 0$ to place a point mass on $\{a\}$ given by 
\begin{equation} \label{lta-local-time}
\mu_{2,t}(\{a\}) = \mbox{$\frac{1}{t}$} \EE[L_a(t)]
\end{equation}
in which $L_a$ denotes the local time of $X$ at $a$.

\begin{rem} \label{masses-observation}
For each $t > 0$, the average expected occupation measure $\mu_{0,t}$ is a probability measure on ${\cal E}$.  In addition, for each $(\tau,Y) \in \A$ with $J_0(\tau,Y) < \infty$, $\mu_{1,t}$ has finite mass for each $t$ and $\limsup_{t\rightarrow \infty} \mu_{1,t}(\overline{\cal R}) \leq J_0(\tau,Y)/k_1$.  Finally observe that when $a$ is a sticky boundary, $\mu_{0,t}$ places a point mass at $a$ for those policies $(\tau,Y)$ that allow the process $X$ to stick at $a$ with positive probability.
\end{rem}

For general policies $(\tau,Y)$, we are interested in the relative compactness of the collection of $\mu_{0,t}$ measures and the associated convergence (or not) of the functionals with integrand $c_0$.  The first proposition below shows that for any sequence $\{t_i\}$ going to $\infty$, $\{\mu_{0,t_i}\}$ is tight for policies $(\tau,Y)$ having finite costs.   To understand the second proposition, consider the $(y,z)$ ordering policy defined by \eqref{sS-tau-def} for $(y,z) \in {\cal R}$.  The $L^1$-convergence in \eqref{eq-c0-integral-slln} implies the remarkable result that for $(y,z)$ policies  
\begin{equation} \label{running-cost-conv}
\lim_{t\rightarrow \infty} \int_{\cal E} c_0(x)\, \mu_{0,t}(dx) = \int_{\cal E} c_0(x)\, \mu_0(dx),
\end{equation}
even when $c_0$ is unbounded, in which $\mu_0$ is the unique stationary measure on ${\cal E}$.  \propref{J-finite} demonstrates that \eqref{running-cost-conv} may not hold for general policies but rather an inequality relation holds.

\begin{prop} \label{mu0-tightness}
Assume Conditions~\ref{diff-cnd} and \ref{cost-cnds} hold.  For $(\tau,Y) \in \A$, let $X$ denote the resulting inventory process satisfying \eqref{controlled-dyn}.  Let $\{t_i: i \in \NN\}$ be a sequence such that $\lim_{i\rightarrow \infty} t_i = \infty$ and for each $i$, define $\mu_{0,t_i}$ by \eqref{mus-t-def}.  If $\{\mu_{0,t_i}: i \in \NN\}$ is not tight, then $J_0(\tau,Y) = \infty$.
\end{prop}

\begin{proof}
If $a,b$ are both finite, then each $\mu_{0,t}$ has its support in $\overline{\mathcal{E}}=[a,b]$ regardless of the types of boundary points.  The collection $\{\mu_{0,t}: t\geq 1\}$ is therefore tight.  Thus for this collection not to be tight, either $a=-\infty$ or $b=\infty$. 

Suppose $\{\mu_{0,t_i}\}$ is not tight.  By \cndref{cost-cnds}(a), $c_0(x) \rightarrow \infty$ as $x \rightarrow a$ or as $x \rightarrow b$.  The lack of tightness of $\{\mu_{0,t_i}\}$ means that there exists some $\epsilon > 0$ such that for all compact sets $K$ and $T > 0$, there exists $t_i \geq T$ for which $\mu_{0,t_i}(K^c) \geq \epsilon$.  Arbitrarily select $M < \infty$ and set $K = \{x \in \overline{\mathcal{E}}: c_0(x) \leq M/\epsilon\}$, noting that $K$ is compact.  Then there exists a subsequence $\{t_{i_j}: i_j\in \NN\}$ for which $\mu_{0,t_{i_j}}(K^c) \geq \epsilon$.  Therefore for each $i_j$,\
\begin{align*}
t_{i_j}^{-1} \EE\left[\int_0^{t_{i_j}} c_0(X(s))\,ds \right] &= \int_{{\cal E}} c_0(x)\, \mu_{0,t_{i_j}}(dx) 
\geq \int_{K^c} c_0(x)\, \mu_{0,t_{i_j}}(dx) 
&\geq \mbox{$\frac{M}{\epsilon}$} \mu_{0,t_{i_j}}(K^c) \geq M.
\end{align*}
Thus $\displaystyle \limsup_{t\rightarrow \infty} t^{-1} \EE\left[\int_0^t c_0(X(s))\, ds\right] \geq M$.  It therefore follows that $J_0(\tau,Y) = \infty$ since $M$ is arbitrary.  
\end{proof}

The above proof shows that tightness of $\{\mu_{0,t}\}$ follows from finiteness of the long-term average holding costs, $\lim_{t\rightarrow \infty} \frac{1}{t} \EE\left[\int_0^t c_0(X(s))\, ds\right]$, regardless of the limiting behaviour of the average expected ordering costs.

As a consequence of \propref{mu0-tightness}, when either $a=-\infty$ or $b=\infty$ we only need to consider those ordering policies $(\tau,Y)$ for which the collection $\{\mu_{0,t}: t \geq 0\}$ is tight.  We denote a generic weak limit as $t\rightarrow \infty$ by $\mu_0$.  Observe that the support of $\mu_0$ might be $\overline{\cal E}$.  The continuity assumptions on $c_0$ at the boundaries in \cndref{cost-cnds} are required due to the limiting measures possibly placing mass there.

\begin{prop} \label{J-finite}
Assume Conditions \ref{diff-cnd} and \ref{cost-cnds} hold.  Let $(\tau,Y) \in {\cal A}$ with $J_0(\tau,Y) < \infty$, $X$ satisfy \eqref{controlled-dyn}, and define $\mu_{0,t}$ by \eqref{mus-t-def} for each $t > 0$.  Then for each $\mu_0$ attained as a weak limit of some sequence $\{\mu_{0,t_j}\}$ as $t_j \rightarrow \infty$, 
$$\int_{\overline{\cal E}} c_0(x)\, \mu_0(dx) \leq J_0(\tau,Y) < \infty.$$
\end{prop}

\begin{proof}
Let $\mu_0$ and $\{t_j\}$ and $\{\mu_{0,t_j}\}$ be as in the statement of the proposition.  Since $\{\mu_{0,t_j}\}$, $\mu_0 \in {\cal P}(\overline{\cal E})$, the Skorokhod representation theorem implies there exist random variables $\{\Xi_j\}$ and $\Xi$, defined on a common probability space $(\wdt{\Omega},\wdt{\cal F},\wdt{\PP})$, such that $\Xi$ has distribution $\mu_0$, $\Xi_j$ has distribution $\mu_{0,t_j}$ for each $j$, and $\Xi_j \rightarrow \Xi$ almost surely as $j \rightarrow \infty$.  Since $c_0$ is bounded below, we may apply Fatou's lemma to obtain
$$\int_{\overline{\cal E}} c_0(x)\, \mu_0(dx) = \wdt{\EE}[c_0(\Xi)] \leq \liminf_{j\rightarrow \infty} \wdt{\EE}[c_0(\Xi_j)] = \liminf_{j\rightarrow \infty} \int_{\overline{\cal E}} c_0(x)\, \mu_{0,t_j}(dx) \leq J_0(\tau,Y) < \infty.$$
\end{proof}
We note that $c_0$ being infinite at a boundary implies that $\mu_0$ cannot assign any positive mass at this point.  In particular, for models in which $a$ is a sticky boundary and $c_0(a) = \infty$, any policy which allows $X$ to stick at $a$ on a set of positive probability incurs an infinite average expected cost for each $t$ and thus has $J_0(\tau,Y) = \infty$.  The condition $J_0(\tau,Y) < \infty$ therefore eliminates such $(\tau,Y)$ from consideration.

This inf-compactness of $c_0$ is central to finiteness of $J_0(\tau,Y)$ implying that $\{\mu_{0,t}\}$ is tight as $t\rightarrow \infty$.  A similar result follows in the case that $c_1$ is inf-compact as well.  However, inf-compactness fails for many natural ordering cost functions; in particular, the most commonly studied function $c_1(y,z) = k_1 + k_2(z-y)$, $(y,z) \in {\cal R}$, comprising fixed plus proportional costs is not inf-compact on an unbounded region of $\RR^2$.  \apref{appendix-C} contains an example which shows that even the requirement of a finite long-term average cost is insufficient to guarantee tightness of $\{\mu_{1,t}\}$ as $t\rightarrow \infty$.  Fortunately, our method of solution does not rely on  tightness of the average expected ordering measures.

\section{The Auxiliary Function $G_0$}\label{sect-main result} 
Our extension of the {\em optimality}\/ of the $(\yzstar,\zzstar)$ policy to the class $\mathcal{A}$ requires the existence of an optimizing pair $(\yzstar,\zzstar) \in \mathcal{R}$ with $F_0(\yzstar,\zzstar) = F_0^*$, which we now impose on the models.  Note that \cndref{extra-cnd} is a sufficient condition for such a pair to exist.  Recall, \cndref{cost-cnds} requires continuity of $c_0$ at the boundary, even for finite, natural boundaries; $c_0$ may take value $\infty$ at the boundaries.

Define the auxiliary function $G_0$ on $\mathcal{E}$ by
\begin{equation} \label{G0-def}
G_0 = g_0 - F_0^* \zeta
\end{equation}
and observe that $G_0 \in C(\mathcal{E})\cap C^2(\I)$.  Moreover, $G_0$ extends uniquely to $\overline{\cal E}$ due to the existence of $(\wdh{y},\wdh{z})$ and $(\wdt{y},\wdt{z})$ in \cndref{extra-cnd} or $c_0$ being infinite at the boundaries.  This observation follows immediately when $a$ is attainable and when $b$ is an entrance boundary since $\zeta$ is finite in these cases.  When $a$ or $b$ are natural boundaries, \lemref{g0-at-a} combined with \cndref{extra-cnd} shows that
\begin{equation} \label{nat-bdry-limits}
\lim_{x\rightarrow a} G_0(x) = \lim_{x\rightarrow a} (g_0(x) - F_0^* \zeta(x)) = \lim_{x\rightarrow a} \left(\mbox{$\frac{g_0(x)}{\zeta(x)}$} - F_0^*\right) \zeta(x) = -\infty
\end{equation}
and similarly $\lim_{x\rightarrow b} G_0(x) = \infty$.  

\begin{rem}
The function $G_0$ differs from the function $G$ used in \cite{helm:15b}.  $G_0$ is defined globally using \eqref{G0-def} whereas $G$ agrees with $G_0$ on the set $[\yzstar,b]$ but is $C^1$-smoothly pasted with $c_1(\cdot,\zzstar)$ on $[a,\yzstar]$ (using the notation in this paper).  The proof of optimality is significantly simplified since $G_0$ has a single expression.

$G_0$ has the following interpretation.  Let $y,z \in \mathcal{E}$.  Then
\begin{eqnarray*}
c_1(y,z) + BG_0(y,z) &=& c_1(y,z) + Bg_0(y,z) - F_0^* B\zeta(y,z) \\
&=& \left(\frac{c_1(y,z) + Bg_0(y,z)}{B\zeta(y,z)} - F_0^*\right) B\zeta(y,z) = (F_0(y,z) - F_0^*) \zeta(y,z).
\end{eqnarray*}
Notice the relation $F_0^* \leq F_0(y,z)$ holds for all $(y,z) \in \mathcal{R}$. Thus the function $c_1(y,z) + BG_0(y,z)$ gives the {\em increase in cost over a cycle}\/ incurred by using the $(y,z)$ ordering policy rather than an optimal ordering policy.

The function $G$ of \cite{helm:15b} has the same interpretation for $y \geq \yzstar$.  But for $y < \yzstar$, it arises from a policy which places an immediate order from $y$ to $\zzstar$ and then follows the $(\yzstar,\zzstar)$ policy.
\end{rem}

The function $G_0$ satisfies an important system of relations.

\begin{prop} \label{qvi-ish}
Assume Conditions \ref{diff-cnd}, \ref{cost-cnds} and \ref{extra-cnd} hold and let $G_0$ be as in \eqref{G0-def}.  Then $G_0$ is a solution of the system 
$$\left\{\begin{array}{rcll}
Af(x) + c_0(x) - F_0^*&=& 0, & \quad x\in \mathcal{I}, \\
Bf(y,z) + c_1(y,z) &\geq& 0,   & \quad (y,z) \in \overline{\mathcal{R}} \\
f(x_0) &=& 0,  & \\
Bf(\yzstar,\zzstar) + c_1(\yzstar,\zzstar) &=& 0. &
\end{array} \right.$$
Moreover, the first relation extends by continuity to $\overline{\cal E}$.
\end{prop}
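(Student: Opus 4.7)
The proof is essentially a direct verification using the defining ODEs of $g_0$ and $\zeta$ together with the definition $G_0 = g_0 - F_0^*\zeta$, so I would proceed identity-by-identity rather than via any deep argument.

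First, for the ODE. Recall that $g_0$ and $\zeta$ were constructed so that $Ag_0 = -c_0$ and $A\zeta = -1$ on $\mathcal{I}$, with $g_0(x_0) = \zeta(x_0) = 0$. Linearity of $A$ gives
\begin{equation*}
AG_0(x) = Ag_0(x) - F_0^*\, A\zeta(x) = -c_0(x) + F_0^*, \qquad x \in \mathcal{I},
\end{equation*}
which is the first equation. The extension to $\overline{\mathcal{E}}$ follows since $c_0$ is continuous up to the boundaries under \cndref{cost-cnds}(a) (with the value $\infty$ allowed at natural boundaries, matching the convention that the limit of $AG_0 + c_0 - F_0^*$ remains $0$). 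The third identity $G_0(x_0)=0$ is immediate from the two normalizations at $x_0$.

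Next, for the jump inequality on $\mathcal{R}$. By definition of $B$ and linearity,
\begin{equation*}
BG_0(y,z) = Bg_0(y,z) - F_0^*\, B\zeta(y,z), \qquad (y,z)\in \mathcal{R}.
\end{equation*}
Since $\zeta$ is strictly increasing on $\mathcal{I}$, $B\zeta(y,z) > 0$ on $\mathcal{R}$. Using the definition of $F_0$ in \eqref{eq-F-fn} and the fact that $F_0(y,z) \geq F_0^*$ on $\mathcal{R}$ (by \thmref{F-optimizers}),
\begin{equation*}
c_1(y,z) + Bg_0(y,z) = F_0(y,z)\, B\zeta(y,z) \geq F_0^*\, B\zeta(y,z),
\end{equation*}
and rearranging gives $BG_0(y,z) + c_1(y,z) \geq 0$, which is the second relation. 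Since $(\yzstar,\zzstar) \in \mathcal{R}$ achieves $F_0(\yzstar,\zzstar) = F_0^*$ by \thmref{F-optimizers}, the same calculation with equality throughout yields the fourth relation $BG_0(\yzstar,\zzstar) + c_1(\yzstar,\zzstar) = 0$.

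There is no substantive obstacle: the only ingredients are the two ODEs defining $g_0$ and $\zeta$, the normalization at $x_0$, strict positivity of $B\zeta$ on $\mathcal{R}$, and the existence of a minimizer of $F_0$ established in \thmref{F-optimizers}. The mildest point requiring care is the continuous extension of the first identity to $\overline{\mathcal{E}}$, which is handled purely by the boundary continuity of $c_0$ from \cndref{cost-cnds}(a).
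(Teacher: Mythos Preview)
Your proof is correct and follows essentially the same approach as the paper's: both verify the first and third relations directly from $Ag_0=-c_0$, $A\zeta=-1$, and the normalizations at $x_0$, and both obtain the jump inequality by rewriting $c_1+Bg_0=F_0\,B\zeta\ge F_0^*\,B\zeta$ and rearranging, with equality at the minimizer $(\yzstar,\zzstar)$.
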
 

The proof is straightforward so is left to the reader.  

As a small digression to illuminate the definitions of $g_0$ and $\zeta$, the next proposition establishes an important transversality condition involving $G_0$ with the ensuing remark providing further clarification.

\begin{prop} \label{sS-in-Q2}
Assume Conditions \ref{diff-cnd}, \ref{cost-cnds} and \ref{extra-cnd}.  Let $x_0 \in \mathcal{I}$ be fixed.  For $a \leq y < z < b$, let $(\tau,Y)$ be the $(y,z)$ ordering policy defined by \eqref{sS-tau-def} and $X$ satisfy \eqref{controlled-dyn}.  Define the process $\wdt{M}$ by  
\begin{equation} \label{M-tilde-def}
\wdt{M}(t) := \int_0^t \sigma(X(s)) G_0'(X(s))\, dW(s), \quad t \geq 0.
\end{equation}
Then there exists a localizing sequence $\{\beta_n: n\in \NN\}$ of stopping times such that for each $n$, $\wdt{M}(\cdot \wedge \beta_n)$ is a martingale and the following transversality condition holds:
\begin{equation} \label{G0-transv-cnd}
\lim_{t\rightarrow \infty} \lim_{n\rightarrow \infty} \mbox{$\frac{1}{t}$} \EE[G_0(X(t\wedge \beta_n))] = 0.
\end{equation}
In addition, defining $\mu_0$ to be the stationary measure and $\mu_1$ to place point mass $\kappa=\frac{1}{B\zeta(y,z)}$ (the long-run frequency of orders) on $\{(y,z)\}$, we have
\begin{equation} \label{G0-constr}
\int_{\cal E} AG_0(x)\, \mu_0(dx) + \int_{\cal R} BG_0(y,z)\, \mu_1(dy\times dz) = 0. 
\end{equation}
\end{prop}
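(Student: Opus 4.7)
The plan is to apply It\^o's formula with jumps to $G_{0}(X(\cdot))$ on a localized interval and then extract the asymptotics using the ergodic identities from Section~4. For the localizing sequence I take $\beta_{n}=\inf\{t\ge 0: X(t)\ge b_{n}\}\wedge n$, where $\{b_{n}\}\subset(z,b)$ satisfies $b_{n}\uparrow b$. Because $b$ is unattainable by \cndref{diff-cnd}(b) and, under the $(y,z)$ policy, $X$ never crosses below $y$ after $\tau_{1}$, on $[0,\beta_{n}]$ the process $X$ stays in the compact set $[x_{0}\wedge y,\,b_{n}]\subset\mathcal{I}$; since $\sigma G_{0}'$ is continuous (hence bounded) there, $\wdt{M}(\cdot\wedge\beta_{n})$ is a square integrable martingale, and $\beta_{n}\uparrow\infty$ almost surely (again because $b$ is unattainable).

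Next, It\^o's formula applied to $G_{0}\in C^{2}(\mathcal{I})$, accounting for the jumps of $X$ at the ordering times $\tau_{k}$ and using $G_{0}(x_{0})=0$, yields
\begin{equation*}
G_{0}(X(t\wedge\beta_{n}))=\int_{0}^{t\wedge\beta_{n}}AG_{0}(X(s))\,ds+\wdt{M}(t\wedge\beta_{n})+\sum_{k:\tau_{k}\le t\wedge\beta_{n}}BG_{0}(X(\tau_{k}-),X(\tau_{k})).
\end{equation*}
By \propref{qvi-ish}, $AG_{0}=F_{0}^{*}-c_{0}$ on $\mathcal{I}$, and the definition of the $(y,z)$ policy forces $X(\tau_{k}-)=y$ and $X(\tau_{k})=z$ for every $k\ge 1$, except possibly when $x_{0}<y$ at $k=1$, which introduces only a bounded deterministic correction. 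Writing $N(s)$ for the number of orders placed in $[0,s]$, taking expectations (so that $\wdt{M}$ drops out) and dividing by $t$ gives
\begin{equation*}
\frac{1}{t}\EE[G_{0}(X(t\wedge\beta_{n}))]=\frac{1}{t}\EE\!\left[\int_{0}^{t\wedge\beta_{n}}(F_{0}^{*}-c_{0}(X(s)))\,ds\right]+\frac{BG_{0}(y,z)}{t}\EE[N(t\wedge\beta_{n})]+\varepsilon_{t},
\end{equation*}
where $\varepsilon_{t}=o(1)$ as $t\to\infty$. Sending $n\to\infty$, the identity $\EE[t\wedge\beta_{n}]\uparrow t$ together with monotone convergence (valid since $c_{0}\ge 0$ and $N$ is nondecreasing) passes the right-hand side to $t^{-1}\EE[\int_{0}^{t}(F_{0}^{*}-c_{0}(X(s)))\,ds]+t^{-1}BG_{0}(y,z)\EE[N(t)]+\varepsilon_{t}$, which then determines $\lim_{n\to\infty}t^{-1}\EE[G_{0}(X(t\wedge\beta_{n}))]$.

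Finally, letting $t\to\infty$ and invoking \remref{rem-sS-cost} to obtain $t^{-1}\EE[\int_{0}^{t}c_{0}(X(s))\,ds]\to Bg_{0}(y,z)/B\zeta(y,z)$, together with \propref{sS-stationary} identifying $\kappa=1/B\zeta(y,z)$ as the asymptotic expected ordering frequency (so $t^{-1}\EE[N(t)]\to 1/B\zeta(y,z)$), and using $BG_{0}=Bg_{0}-F_{0}^{*}B\zeta$ to simplify,
\begin{equation*}
\lim_{t\to\infty}\lim_{n\to\infty}\frac{1}{t}\EE[G_{0}(X(t\wedge\beta_{n}))]=F_{0}^{*}-\frac{Bg_{0}(y,z)}{B\zeta(y,z)}+\frac{BG_{0}(y,z)}{B\zeta(y,z)}=0,
\end{equation*}
which is \eqref{G0-transv-cnd}. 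The identity \eqref{G0-constr} is a rephrasing of the same computation: since $\mu_{0}$ has density $\pi$ from \eqref{eq-pi-defn}, \remref{rem-sS-cost} gives $\int AG_{0}\,d\mu_{0}=F_{0}^{*}-Bg_{0}(y,z)/B\zeta(y,z)$, while $\mu_{1}$ has mass $1/B\zeta(y,z)$ at $\{(y,z)\}$, so $\int BG_{0}\,d\mu_{1}=BG_{0}(y,z)/B\zeta(y,z)=Bg_{0}(y,z)/B\zeta(y,z)-F_{0}^{*}$; the two sum to zero. The main technical obstacle is the careful application of It\^o's formula with jumps on the localized interval and the justification of the order of the two limits; once the monotone convergence step in $n$ is performed first, the collapse of the double limit to $0$ is a direct algebraic consequence of the definition of $G_{0}$.
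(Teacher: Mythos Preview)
Your proof is correct and follows essentially the same route as the paper: localize with hitting times of levels $b_n\uparrow b$, apply It\^o's formula with jumps to $G_0(X(\cdot\wedge\beta_n))$, take expectations, pass $n\to\infty$ via monotone convergence, and then use the ergodic limits $t^{-1}\EE[\int_0^t c_0(X(s))\,ds]\to Bg_0(y,z)/B\zeta(y,z)$ and $t^{-1}\EE[N(t)]\to 1/B\zeta(y,z)$ to collapse the expression to zero. Your derivation of \eqref{G0-constr} by directly evaluating $\int AG_0\,d\mu_0$ and $\int BG_0\,d\mu_1$ is in fact a slight streamlining of the paper's argument, which instead routes through the weak convergence $\mu_{0,t}\Rightarrow\mu_0$ and $\mu_{1,t}\Rightarrow\mu_1$; both arrive at the same identity.
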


\begin{proof}
Let $(y,z)$, $(\tau,Y)$ and $X$ be as in the statement of the proposition.  Let $\{b_n:n\in\NN\}\subset \mathcal{I}$ be a strictly increasing sequence such that $x_0 \vee z \leq b_1$ and $\lim_{n\rightarrow \infty} b_n = b$ and define the sequence of localizing times $\{\beta_n: n\in \NN\}$ by $\beta_n = \inf\{t\geq 0: X(t) = b_n\}$.  Using It\^{o}'s formula, it follows that for each $n$,
\begin{eqnarray*}
G_0(X(t\wedge\beta_n)) = G_0(x_0) &+& \int_0^{t\wedge \beta_n} AG_0(X(s))\, ds + \int_0^{t\wedge \beta_n} \sigma(X(s)) G_0'(X(s))\, dW(s) \\
&+& \sum_{k=1}^\infty I_{\{\tau_k \leq t\wedge \beta_n\}} BG_0(X(\tau_k-),X(\tau_k));
\end{eqnarray*}
note that no local time term is included in this identity when $a$ is a reflecting boundary even when $y = a$ since an order is placed the first time $X$ hits $a$ so no local time is accrued.  Taking expectations and using the fact that $AG_0 = F_0^*-c_0$, we then obtain
\begin{eqnarray} \label{G0-id-6}
\EE[G_0(X(t\wedge \beta_n))] = G_0(x_0) + F_0^*\EE[t\wedge \beta_n] &-& \EE\left[\int_0^{t\wedge \beta_n} c_0(X(s))\, ds \right] \\ \nonumber
&+& \EE\left[\sum_{k=1}^\infty I_{\{\tau_k \leq t\wedge\beta_n\}} BG_0(X(\tau_k-),X(\tau_k))\right].
\end{eqnarray}
Since $b$ is inaccessible from the interior and $z < b$, it follows that $\beta_n \rightarrow \infty\; (a.s.)$ as $n\rightarrow \infty$.  Applying the monotone and dominated convergence theorems,  we have
\begin{align*}
& \lim_{n\rightarrow \infty} \EE[G_0(X(t\wedge \beta_n))] \\ &\ \ = G_0(x_0) + F_0^* t - \EE\left[\int_0^t c_0(X(s))\, ds+ \sum_{k=1}^\infty I_{\{\tau_k \leq t\}} BG_0(X(\tau_k-),X(\tau_k))\right] \\
&\ \ = G_0(x_0) + F_0^* t - \EE\left[\int_0^t c_0(X(s))\, ds\right] + BG_0(y,z) \EE\left[\sum_{k=1}^\infty I_{\{\tau_k \leq t\}}\right]  \\
& \ \  \quad +\; (BG_0(x_0,z) - BG_0(y,z))\EE[I_{\{\tau_1 = 0\}}].
\end{align*}
Divide by $t$ and let $t \rightarrow \infty$.  Note $\lim_{t\rightarrow \infty} \frac{1}{t} \EE\left[\sum_{k=1}^\infty I_{\{\tau_k \leq t\}}\right] = \frac{1}{B\zeta(y,z)}$ and the $L^1$ convergence in \eqref{eq-c0-integral-slln} gives $\lim_{t\rightarrow \infty} \frac{1}{t} \EE\left[\int_0^t c_0(X(s))\, ds\right] = \frac{Bg_0(y,z)}{B\zeta(y,z)}$.  Therefore
$$\lim_{t\rightarrow \infty} \lim_{n\rightarrow \infty} \mbox{$\frac{1}{t}$} \EE[G_0(X(t\wedge \beta_n))] = F_0^* - \frac{Bg_0(y,z)}{B\zeta(y,z)} + \frac{Bg_0(y,z) - F_0^* B\zeta(y,z)}{B\zeta(y,z)} = 0.$$
Using the definitions of $\mu_{0,t}$ and $\mu_{1,t}$ in \eqref{mus-t-def}, this argument establishes that
\begin{equation} \label{limit1}
\lim_{t\rightarrow \infty}\left[\int_{\cal E} AG_0(x)\, \mu_{0,t}(dx) + \int_{\cal R} BG_0(y,z)\, \mu_{1,t}(dy\times dz)\right] = 0.
\end{equation}
Since $AG_0 = F_0^* - c_0$, by \eqref{running-cost-conv} and the definition of weak convergence, 
\begin{equation} \label{limit2}
\lim_{t\rightarrow \infty} \int_{\cal E} AG_0(x)\, \mu_{0,t}(dx) = \int_{\cal E} AG_0(x)\, \mu_0(dx).
\end{equation}
Turning to the convergence involving $\mu_{1,t}$, the $(y,z)$ ordering policy places one order at time $0$ when $x_0 < y$ so $\mu_{1,t}$ has mass on $(x_0,z)$ but thereafter orders are only placed when $X(t-)=y$ so all further mass of $\mu_{1,t}$ is on $\{(y,z)\}$. When $x_0 \geq y$, $\mu_{1,t}$ only has mass on $\{(y,z)\}$.   
This implies $\mu_{1,t} \Rightarrow \mu_1$ as $t\rightarrow \infty$, which in turn implies  
\begin{equation} \label{limit3}
\lim_{t\rightarrow \infty} \int_{\cal R} BG_0(y,z)\, \mu_{1,t}(dy \times dz) = \int_{\cal R} BG_0(y,z)\, \mu_1(dy\times dz).
\end{equation}
Combining \eqref{limit1}, \eqref{limit2} and \eqref{limit3} establishes \eqref{G0-constr}. 
\end{proof}

\begin{rem} \label{rem-ho-homogeneous-solns}
By definition, the functions $g_0$ and $\zeta$ are solutions of $Af = -c_0$ and $Af = -1$, respectively, for which $g_0(x_0) = 0 = \zeta(x_0)$.  The choice of the function $G_0 = g_0-F_0^*\zeta$ is the only $C^2$ solution of the inhomogeneous equation $Af =F_0^*-c_0$ with $f(x_0)=0$ for which the transversality condition \eqref{G0-transv-cnd} holds.  To see this, recall the scale function $S$ and the constant function are two linearly independent solutions to the homogeneous equation $Af=0$.  Define the function $\wdh{S}(x) = S(x) - S(x_0)$ for $x \in \mathcal{E}$ so that $\wdh{S}$ is a solution of $Af=0$ with $\wdh{S}(x_0)=0$.  Following the previous proof to \eqref{G0-id-6} using $\wdh{S}$ yields
\begin{eqnarray*}
\EE[\wdh{S}(X(t\wedge \beta_n))] &=& \EE\left[\sum_{k=1}^\infty I_{\{\tau_k \leq t\wedge\beta_n\}} B\wdh{S}(X(\tau_k-),X(\tau_k))\right] \\
&=& B\wdh{S}(y,z) \EE\left[\sum_{k=1}^\infty I_{\{\tau_k \leq t\wedge \beta_n\}}\right] + [B\wdh{S}(x_0,z)-B\wdh{S}(y,z)]\EE[I_{\{\tau_k=0\}}].
\end{eqnarray*}
Thus letting $n\rightarrow \infty$, dividing by $t$ and letting $t\rightarrow \infty$ yields 
\begin{equation} \label{non-zero-lim}
\lim_{t\rightarrow \infty} \lim_{n\rightarrow \infty} \mbox{$\frac{1}{t}$} \EE[\wdh{S}(X(t\wedge \beta_n))] = \frac{B\wdh{S}(y,z)}{B\zeta(y,z)} = \frac{S(z) - S(y)}{\zeta(z) - \zeta(y)} > 0.
\end{equation}
Observe that for $K \in \RR$, $\wdh{G}_0 = G_0 + K\wdh{S}$ is the general solution to the inhomogeneous equation $Af=F_0^*-c_0$ with $f(x_0)=0$.  As a result of \propref{sS-in-Q2} and \eqref{non-zero-lim}, we have
$$\lim_{t\rightarrow \infty} \lim_{n\rightarrow \infty} \mbox{$\frac{1}{t}$} \EE[\wdh{G}_0(X(t\wedge\beta_n))] = \frac{K(S(z)-S(y))}{\zeta(z)-\zeta(y)}$$
and the transversality condition \eqref{G0-transv-cnd} fails for any $\wdh{G}_0$ with non-zero $K$.
\end{rem}

\section{Policy Class ${\cal A}_0$ and Optimality} \label{sect:optim}
For models having a reflecting boundary point $a$, we are able to prove optimality of the $(\yzstar,\zzstar)$ ordering policy within a slightly restricted class of admissible policies.  (Note there is no restriction on the class $\A$ when $a$ is not a reflecting boundary.)

\begin{defn} \label{admissible-class-A0-defn} 
For models in which $a$ is a reflecting boundary point, the class $\A_0 \subset \A$ consists of those policies $(\tau,Y)$ for which the transversality condition
\begin{equation} \label{no-local}
\lim_{t\rightarrow \infty} t^{-1} \EE[L_a(t)] = 0
\end{equation}
holds; recall, $L_a$ denotes the local time process of $X$ at $a$.
\end{defn} 

Referring to  Definition~3.11 of \cite{helm:15b}, restriction of the admissible class is also required in our previous paper.  In particular, the class $\A_1$ in that paper requires \eqref{no-local} along with the existence of a localizing sequence $\{\beta_n\}$ such that the stopped processes \eqref{M-tilde-def} are martingales and, using limits inferior, the additional transversality condition \eqref{G0-transv-cnd} holds; the conditions use the function $G$ of that paper.  This paper only requires the transversality condition \eqref{no-local} on the local time process.

For $(y,z)$ ordering policies, \propref{sS-in-Q2} shows that the corresponding limiting measures $(\mu_0,\mu_1)$ of the pairs $\{(\mu_{0,t},\mu_{1,t}): t > 0\}$ satisfy \eqref{G0-constr}.  However for a general ordering policy $(\tau,Y) \in {\cal A}_0$, the measures $\{\mu_{1,t}\}$ may not have weak limits as $t\rightarrow \infty$.  We establish the limiting adjoint relation \eqref{bar} for a class of test functions ${\cal D}$, defined below, that is used to show that $J_0(\tau,Y) \geq F_0^*$ for any $(\tau,Y) \in {\cal A}_0$.  

Recall, \propref{mu0-tightness} demonstrates that the measures $\{\mu_{0,t}\}$ are tight (as $t\rightarrow \infty$) whenever $J_0(\tau,Y) < \infty$.  It follows that any weak limit $\mu_0$ will have support in $\overline{\cal E}$.  However, when $c_0(a) = \infty$ or $c_0(b) = \infty$, $J_0(\tau,Y) < \infty$ implies that each limiting measure $\mu_0$ places no mass on the corresponding boundary.

We begin by identifying a particular class of test functions that plays a key role in the analysis.  

\begin{defn} \label{class-D-def}
A function $f$ is in ${\cal D}$ provided it satisfies
\begin{itemize}
\item[(a)] $f\in C(\overline{\cal E}) \cap C^2(\I)$ and there exists $L_f < \infty$ such that 
\begin{itemize}
\item[(i)] $|f| \leq L_f$;
\item[(ii)] $(\sigma f')^2 \leq L_f(1+c_0)$;
\item[(iii)] $|Af| \leq L_f$;
\end{itemize}
\item[(b)] 
\begin{itemize}
\item[(i)] for all models, at each boundary where $c_0$ is finite, $Af$ extends continuously to the boundary with a finite value;
\item[(ii)] when $a$ is a reflecting boundary, $|f'(a)| < \infty$; and 
\item[(iii)] when $a$ is a sticky boundary and $c_0(a) < \infty$, $\sigma f'$ extends continuously at $a$ to a finite value. 
\end{itemize} 
\end{itemize}
\end{defn}

As pointed out at the end of Section 3, any policy $(\tau,Y)$ which allows $X$ to stick at $a$ with positive probability when $c_0(a) = \infty$ has $J_0(\tau,Y) = \infty$, so is clearly not optimal.  Thus the stickiness of $a$ is only of concern when $c_0(a) < \infty$, in which case \defref{class-D-def}(b,i) and (b,iii) are required for our analysis.

\begin{prop} \label{bdd-adjoint}
Assume Conditions \ref{diff-cnd} and \ref{cost-cnds}.  Let $(\tau,Y) \in {\cal A}_0$ with $J_0(\tau,Y) < \infty$ and let $X$ satisfy \eqref{controlled-dyn}.  For $t > 0$, define $(\mu_{0,t},\mu_{1,t})$ by \eqref{mus-t-def} and let $\mu_0$ be such that $\mu_{0,t_j} \Rightarrow \mu_0$ as $j\rightarrow \infty$ for some sequence $\{t_j: j \in \NN\}$ with $\lim_{j\rightarrow \infty} t_j = \infty$.  Then the limiting adjoint relation
\begin{equation} \label{bar}
\forall f \in {\cal D}, \quad \int_{\overline{\cal E}} Af(x)\, \mu_0(dx) + \lim_{j\rightarrow \infty} \int_{\overline{\cal R}} Bf(y,z)\, \mu_{1,t_j}(dy\times dz) = 0
\end{equation}
holds.
\end{prop}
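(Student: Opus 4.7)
The plan is to apply It\^o's formula to $f(X(t))$ for an arbitrary $f \in \mathcal{D}$, take expectations, normalize by $t_j$, and let $j\to\infty$. The class $\mathcal{D}$ is tailored precisely so that every term in this procedure admits an identifiable limit; in particular, the constraints on $f'$, $Af$, and $(\sigma f')^2$ in \defref{class-D-def} match, one-for-one, the obstacles that arise.

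First I would invoke It\^o's formula on the jump-diffusion $X$ of \eqref{controlled-dyn}, yielding
\begin{equation*}
f(X(t)) = f(x_0) + \int_0^t Af(X(s))\, ds + \int_0^t \sigma(X(s)) f'(X(s))\, dW(s) + \sum_{k=1}^\infty I_{\{\tau_k \leq t\}} Bf(X(\tau_k-), X(\tau_k)) + f'(a)\, L_a(t),
\end{equation*}
where the local time term appears only when $a$ is reflecting and $f'(a)$ is finite by \defref{class-D-def}(b,ii); at a sticky boundary, the continuous extensions of $Af$ and $\sigma f'$ granted by \defref{class-D-def}(b,i) and (b,iii) make the drift and stochastic integrals meaningful at $a$. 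Next I would argue that the stochastic integral is a true martingale: by \defref{class-D-def}(a,ii), $(\sigma f')^2 \leq L_f(1+c_0)$, and since $J_0(\tau,Y) < \infty$ forces $\EE\left[\int_0^t c_0(X(s))\, ds\right] < \infty$ for every $t > 0$ (otherwise $\limsup t^{-1}\EE[\int_0^t c_0(X(s))\,ds] = \infty$), the stochastic integral has zero expectation. Taking expectations and rewriting with the measures from \eqref{mus-t-def} yields
\begin{equation*}
\frac{\EE[f(X(t_j))] - f(x_0)}{t_j} = \int Af\, d\mu_{0,t_j} + \int Bf\, d\mu_{1,t_j} + \frac{f'(a)\, \EE[L_a(t_j)]}{t_j}.
\end{equation*}

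The final step is to let $j\to\infty$. The left-hand side vanishes via $|f| \leq L_f$ from \defref{class-D-def}(a,i); the local-time term vanishes by \defref{class-D-def}(b,ii) together with condition (iii) of \defref{admissible-class-A-defn}; and $\int Af\, d\mu_{0,t_j} \to \int Af\, d\mu_0$ follows from $\mu_{0,t_j} \Rightarrow \mu_0$, the boundedness of $Af$ from \defref{class-D-def}(a,iii), and its continuous extension to boundaries where $c_0$ is finite, as in \defref{class-D-def}(b,i). At natural boundaries where $c_0 = \infty$, \propref{J-finite} combined with $J_0(\tau,Y) < \infty$ forces $\mu_0$ to place no mass there, so the behaviour of $Af$ at such points is immaterial. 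Since every other term converges, the remaining quantity $\int Bf\, d\mu_{1,t_j}$ is forced to converge as well, and \eqref{bar} follows.

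The most delicate point will be the careful application of It\^o's formula at non-standard boundaries (reflecting and sticky) together with the verification that the weak limit $\mu_0$ does not see the pathological boundary behaviour of $Af$ when $c_0$ is infinite. Each of these issues is, however, handled by exactly one clause in \defref{class-D-def} (or by \propref{J-finite} combined with the admissibility structure), so no new analytic tool is required beyond the careful bookkeeping outlined above.
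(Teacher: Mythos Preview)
Your proposal is correct and follows essentially the same approach as the paper's proof: apply It\^o's formula to $f(X(t))$, use the bound $(\sigma f')^2 \leq L_f(1+c_0)$ together with $J_0(\tau,Y)<\infty$ to kill the stochastic integral in expectation, divide by $t_j$, and pass to the limit term by term using exactly the clauses of \defref{class-D-def} you identify. The paper likewise notes that $\mu_0$ places no mass at boundaries where $c_0=\infty$, so the continuous extension of $Af$ is only needed at boundaries with $c_0$ finite, and the existence of the limit $\lim_j\int Bf\,d\mu_{1,t_j}$ is inferred from the convergence of all other terms in the identity.
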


\begin{rem}
For policies $(\tau,Y)$ with $J_0(\tau,Y) < \infty$, \propref{mu0-tightness} only establishes tightness of the average expected occupation measures $\{\mu_{0,t}\}$ but not tightness of the average expected ordering measures $\{\mu_{1,t}\}$.  Ordering policies for which the collection $\{\mu_{1,t}\}$ is tight as $t\rightarrow \infty$ will have weak limits.  For any pair $(\mu_0,\mu_1)$ with $(\mu_{0,t_j},\mu_{1,t_j}) \Rightarrow (\mu_0,\mu_1)$ as $j\rightarrow \infty$, \eqref{bar} is more simply expressed as 
$$\forall f \in {\cal D}, \quad \int_{\overline{\cal E}} Af(x)\, \mu_0(dx) + \int_{\overline{\cal R}} Bf(y,z)\, \mu_1(dy\times dz) = 0.$$
\end{rem}

\begin{proof}
Let $(\tau,Y)$, $X$, $\{t_j\}$, $\{(\mu_{0,t_j},\mu_{1,t_j})\}$ and $\mu_0$ be as in the statement of the proposition.  The following analysis considers the case of $a$ being a reflecting boundary; when $a$ is not reflecting, the local time term is omitted.  For each $f \in {\cal D}$ and $j\in \NN$, It\^{o}'s formula yields
\begin{eqnarray*}
f(X(t_j)) = f(x_0) &+& \int_0^{t_j} Af(X(s))\, ds + \sum_{k=1}^\infty I_{\{\tau_k \leq t_j\}} Bf(X(\tau_k-),X(\tau_k)) \\
&+& \int_0^{t_j} \sigma(X(s))f'(X(s))\, dW(s) + f'(a) L_a(t_j).
\end{eqnarray*}
Observe that for each $t_j$, 
$$\EE\left[\int_0^{t_j} \left(\sigma(X(s)) f'(X(s))\right)^2\, ds\right] \leq 2\EE\left[\int_0^{t_j} L_f [1+c_0(X(s))]]\, ds\right] < \infty$$
so $\sigma f' \in L^2(\Omega \times [0,t_j])$ and the stochastic integral has mean $0$.  Thus taking expectations and dividing by $t_j$ yields
\begin{align} \label{simple-conv}\nonumber
{\mbox{$\frac{1}{t_j}$}\EE_{x_0}[f(X(t_j))]}&= \mbox{$\frac{f(x_0)}{t_j}$} + \int_{\cal E} Af(x)\, \mu_{0,t_j}(dx) + \int_{\overline{\cal R}} Bf(y,z)\, \mu_{1,t_j}(dy\times dz) + f'(a) \mbox{$\frac{\EE[L_a(t_j)]}{t_j}$}.
\end{align}
Observe carefully that $\overline{\cal E}$ differs from ${\cal E}$ only when at least one of the boundaries is finite and natural.  Moreover, for each $t_j$, $\mu_{0,t_j}$ places no mass on these boundaries so the integration may also be viewed as being over $\overline{\cal E}$ and this is required when we pass to the limit as $t_j \rightarrow \infty$ since the limiting measure $\mu_0$ may put positive mass on such a boundary point. 

Since by the definition of ${\cal D}$, the function $f\in C(\overline{\cal E})\cap C^2({\cal I})$ is bounded, implying $Bf$ is also bounded and continuous.  In addition, $Af$ is bounded and continuous and extends continuously to a finite value at $a$ when $a$ is a finite, natural boundary, and $f'(a)$ is finite.  Upon letting $j \rightarrow \infty$ we obtain
$$\int_{\overline{\cal E}} Af(x)\, \mu_0(dx) + \lim_{j \rightarrow \infty} \int_{\overline{\cal R}} Bf(y,z)\, \mu_{1,t_j}(dy\times dz) = 0;$$
the existence of $\lim_{j \rightarrow \infty} \int Bf\, d\mu_{1,t_j}$ follows from the limits existing for the other terms in the previous equation.  Notice, in particular, that $\mu_0$ places no mass on boundaries where $c_0$ is infinite so the weak convergence argument does not require $Af$ to extend continuously at such boundaries (cf., \defref{class-D-def}(b,i)).  When $a$ is a sticky boundary, however, the extensions in \defref{class-D-def}(b,i) and (b,iii) are required to apply It\^{o}'s formula.  
\end{proof}

\begin{cor}
Assume Conditions \ref{diff-cnd}, \ref{cost-cnds} and \ref{extra-cnd}.  Suppose $G_0 \in {\cal D}$.  Then for every $(\tau,Y) \in {\cal A}_0$, $J_0(\tau,Y) \geq F_0^*$ and hence the $(\yzstar,\zzstar)$ ordering policy is optimal in the class ${\cal A}_0$.
\end{cor}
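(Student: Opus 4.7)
The plan is to show $J_0(\tau,Y) \ge F_0^*$ for every $(\tau,Y) \in {\cal A}_0$, by coupling the adjoint relation of \propref{bdd-adjoint} applied to the test function $G_0$ with the inequality $BG_0 + c_1 \ge 0$ from \propref{qvi-ish}. Combined with the corollary at the end of Section 4, which gives $J_0 = F_0^*$ for the $(\yzstar,\zzstar)$ policy, this yields optimality in ${\cal A}_0$.

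First I would dispose of the case $J_0(\tau,Y) = \infty$ trivially and assume $J_0(\tau,Y) < \infty$. By \propref{mu0-tightness} the family $\{\mu_{0,t}\}$ is then tight on $\overline{\cal E}$. I would select $t_j \to \infty$ so that
$$\int c_0\, d\mu_{0,t_j} + \int c_1\, d\mu_{1,t_j} \longrightarrow J_0(\tau,Y),$$
and, by passing to a subsequence, arrange $\mu_{0,t_j} \Rightarrow \mu_0$ for some $\mu_0 \in {\cal P}(\overline{\cal E})$. No tightness is claimed or needed for $\{\mu_{1,t_j}\}$.

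Next, since $G_0 \in {\cal D}$ by hypothesis, \propref{bdd-adjoint} gives
$$\int AG_0\, d\mu_0 + \lim_{j \to \infty} \int BG_0\, d\mu_{1,t_j} = 0,$$
and in particular the scalar limit on the right exists in $\R$. Using $AG_0 = F_0^* - c_0$ from \propref{qvi-ish}, this rearranges to
$$F_0^* = \int c_0\, d\mu_0 - \lim_{j \to \infty} \int BG_0\, d\mu_{1,t_j}.$$
The inequality $BG_0(y,z) + c_1(y,z) \ge 0$ from \propref{qvi-ish} extends to the diagonal of $\overline{\cal R}$ because $BG_0(y,y) = 0$ while $c_1 \ge k_1 > 0$. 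Integrating gives $-\int BG_0\, d\mu_{1,t_j} \le \int c_1\, d\mu_{1,t_j}$ for every $j$, so
$$F_0^* \le \int c_0\, d\mu_0 + \liminf_{j \to \infty} \int c_1\, d\mu_{1,t_j}.$$
Combining the Skorohod/Fatou bound $\int c_0\, d\mu_0 \le \liminf_j \int c_0\, d\mu_{0,t_j}$ (the key intermediate estimate in the proof of \propref{J-finite}) with superadditivity of $\liminf$ produces
$$F_0^* \le \liminf_{j \to \infty}\left[\int c_0\, d\mu_{0,t_j} + \int c_1\, d\mu_{1,t_j}\right] = J_0(\tau,Y),$$
by the choice of $\{t_j\}$.

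The hard part is that $\{\mu_{1,t}\}$ need not be tight, so one cannot simply take a weak limit $\mu_1$ and integrate $BG_0$ against it. The argument sidesteps this obstacle by appealing to \propref{bdd-adjoint}, whose It\^{o}-based derivation produces the single finite scalar limit $\lim_j \int BG_0\, d\mu_{1,t_j}$ even when $\{\mu_{1,t_j}\}$ itself has no weak limit; the pointwise inequality $BG_0 \ge -c_1$ is then robust enough, after integration, to convert this finite limit into the required cost bound. The other subtle requirement is that $G_0 \in {\cal D}$, which is imposed as a hypothesis and presumably is to be verified case-by-case via the integrability and boundary conditions encoded in \defref{class-D-def}.
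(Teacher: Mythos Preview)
Your proof is correct and follows essentially the same approach as the paper: both combine the adjoint relation from \propref{bdd-adjoint} for the test function $G_0$ with the identities $AG_0 + c_0 = F_0^*$ and $BG_0 + c_1 \ge 0$ from \propref{qvi-ish}. The only cosmetic difference is that the paper exploits directly that $AG_0 + c_0 \equiv F_0^*$ is constant, so $\int (AG_0+c_0)\,d\mu_{0,t_j} = F_0^*$ for every $j$ and no Fatou step is needed; your detour through $\int c_0\,d\mu_0$ and the Skorohod/Fatou bound is correct but unnecessary.
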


\begin{proof} 
Let $(\tau,Y) \in {\cal A}_0$, $X$ satisfy \eqref{controlled-dyn} and let $\{t_j: j\in \NN\}$ be a sequence such that
$$J_0(\tau,Y) = \lim_{j\rightarrow \infty} \mbox{$\frac{1}{t_j}$} \EE\left[\int_0^{t_j} c_0(X(s))\, ds + \sum_{k=1}^\infty I_{\{\tau_k \leq t_j\}} c_1(X(\tau_k-),X(\tau_k))\right].$$
By considering a subsequence, if necessary, let $\mu_{0,t_j} \Rightarrow \mu_0$ for some $\mu_0 \in {\cal P}(\overline{\cal E})$.  The combination of Propositions \ref{qvi-ish} and \ref{bdd-adjoint} immediately implies that   
\begin{eqnarray*}
J_0(\tau,Y) &=& \lim_{j\rightarrow \infty} \left(\int_{\overline{\cal E}} c_0(x)\, \mu_{0,t_j}(dx) + \int_{\overline{\cal R}} c_1(y,z)\, \mu_{1,t_j}(dy\times dz)\right) \\
&=& \lim_{j\rightarrow \infty} \left(\int_{\overline{\cal E}} (AG_0 (x) + c_0(x))\, \mu_{0,t_j}(dx) + \int_{\overline{\cal R}} (BG_0(y,z) + c_1(y,z))\, \mu_{1,t_j}(dy\times dz)\right) \\
&\geq& \int_{\overline{\cal E}} F_0^* \, \mu_0(dx) = F_0^*
\end{eqnarray*}
and thus the $(\yzstar,\zzstar)$ ordering policy is optimal in the class ${\cal A}_0$.  
\end{proof}

In general $G_0 \notin \mathcal{D}$ so it is necessary to approximate $G_0$ by functions in $\mathcal{D}$ and pass to a limit.    Interestingly, the following analysis works with the approximating functions $\gn$ defined in \lemref{lem:G0-approx} without establishing an adjoint relation involving the function $G_0$.  

Recall from \eqref{nat-bdry-limits} that when $a$ is a natural boundary, $G_0(a) := \lim_{x\rightarrow a} G_0(x) = -\infty$ and similarly, $G_0(b):= \lim_{x\rightarrow b} G_0(x) = \infty$ when $b$ is natural.

To proceed, we impose another set of conditions.

\begin{cnd} \label{AG0-unif-int}
Let $G_0$ be as defined in \eqref{G0-def}.   
\begin{itemize}
\item[(a)] There exists some $L < \infty$ and some $y_1 > a$ such that 
\begin{itemize}
\item[(i)] for models having $c_0(a) = \infty$,
$$\frac{c_0(x)}{(1 + |G_0(x)|)^2} + \frac{(\sigma(x) G_0'(x))^2}{(1 + |G_0(x)|)^3} 
\leq L, \qquad a < x < y_1;$$
\item[(ii)] for models in which $c_0(a) < \infty$, there is some $\epsilon \in (0,1)$ such that 
$$\frac{(\sigma(x) G_0'(x))^2}{(1 + |G_0(x)|)^{2+\epsilon}} \leq L, \qquad a \leq x < y_1.$$
\end{itemize} 
\item[(b)] There exists some $L < \infty$ and some $z_1 < b$ such that 
\begin{itemize}
\item[(i)] for models having $c_0(b) = \infty$,
$$\frac{c_0(x)}{(1 + |G_0(x)|)^2} 
+ \frac{(\sigma(x)G_0'(x))^2}{(1+|G_0(x)|)(1+c_0(x))} \leq L, \qquad z_1 < x < b;$$
\item[(ii)] for models in which $c_0(b) < \infty$, there is some $\epsilon \in (0,1)$ such that 
$$\frac{(\sigma(x) G_0'(x))^2}{(1 + |G_0(x)|)^{2+\epsilon}} + \frac{(\sigma(x)G_0'(x))^2}{(1+|G_0(x)|)(1+c_0(x))} \leq L, \qquad z_1 < x \leq b.$$
\end{itemize} 
\item[(c)] \begin{itemize}
\item[(i)] When $G_0(a) > -\infty$, or when $a$ is a sticky boundary with $c_0(a) < \infty$,  $\displaystyle\lim_{x\rightarrow a} \sigma(x) G_0'(x)$ exists and is finite; 
\item[(ii)] when $a$ is a reflecting boundary, $G_0'(a)$ exists and is finite; and 
\item[(iii)] when $G_0(b) < \infty$,  $\displaystyle\lim_{x\rightarrow b} \sigma(x) G_0'(x)$ exists and is finite.
\end{itemize}
\end{itemize}
\end{cnd}

First note that the bound in \cndref{AG0-unif-int}(b,i) at the boundary $b$ is more restrictive than the similar bound in \cndref{AG0-unif-int}(a,i) at $a$ since
\begin{equation} \label{bi-implies-ai}
\frac{(\sigma(x) G_0'(x))^2}{(1 + |G_0(x)|)^3} = \frac{(\sigma(x)G_0'(x))^2}{(1+|G_0(x)|)(1+c_0(x))} \cdot \frac{1+c_0(x)}{(1+|G_0(x)|)^2} \leq L(1+L).
\end{equation}
The need for tighter restrictions at the boundary $b$ than at $a$ is not unexpected since there is no way to control the process from diffusing upwards whereas ordering can prevent the process from diffusing towards $a$.

The reason for having two different conditions in \cndref{AG0-unif-int}(a,b) based on whether $c_0$ at the boundary is finite or infinite is that any limiting pair of measures $(\mu_0,\mu_1)$ arising from an admissible policy $(\tau,Y)$ having finite cost $J_0(\tau,Y)$ must place no $\mu_0$-mass at a boundary where $c_0$ is infinite.  A weak limit $\mu_0$ may have positive mass at a boundary when $c_0$ is finite.  Also notice the subtle assumption in \cndref{AG0-unif-int}(a,ii) and (b,ii) that the bounds extend to the boundary whereas there is no assumption needed at the boundary in \cndref{AG0-unif-int}(a,i) and (b,i).

For models that satisfy Conditions~\ref{diff-cnd}, \ref{cost-cnds} and \ref{extra-cnd}, Condition \ref{AG0-unif-int} places additional restrictions.  Section~\ref{ex-branching} illustrates the interplay between the cost rate function and the dynamics of the inventory process necessary for this condition to be satisfied.

Now define the elementary function $h$ on $\RR$ which is central to the approximation of $G_0$:
$$h(x) = \left\{\begin{array}{cl}
-\frac{1}{8} x^4 + \frac{3}{4} x^2 + \frac{3}{8}, & \quad \mbox{for } |x| \leq 1, \rule[-10pt]{0pt}{12pt} \\
|x|, & \quad \mbox{for } |x| \geq 1.
\end{array}\right.$$
Note that $h \in C^2(\RR)$, $h > 0$ and $h(x) \geq |x|$.  For later reference, elementary calculations show that $h$ has the following properties on $\RR$: 
\begin{itemize}\parskip=2pt
\item $h'<0$ on $(-\infty,0)$ and $h'>0$ on $(0,\infty)$;
\item $h''(x) \geq 0$ for all $x \in \RR$ and $h''(x) = 0$ for $|x| \geq 1$;
\item $0 \leq h(x) - x h'(x) \leq \frac{3}{8}$; and 
\item $h(x) + x h'(x) \geq 0$. 
\end{itemize}

In the next two lemmas, we define a sequence of functions $\{\gn: n\in \NN\} \subset {\cal D}$ which approximate $G_0$ and examine the convergence of $A\gn$ and $B\gn$.  The proofs are given in \apref{appendix-B}.

\begin{lem}  \label{lem:G0-approx}
Assume Conditions \ref{diff-cnd}, \ref{cost-cnds}, \ref{extra-cnd} and \ref{AG0-unif-int} with $G_0$ defined by \eqref{G0-def}.  For each $n \in \NN$, define the function $\gn$ by 
\begin{equation} \label{Gn-def}
\gn = \frac{G_0}{1+\frac{1}{n} h(G_0)}.
\end{equation}
Then $\gn \in {\cal D}$.
\end{lem}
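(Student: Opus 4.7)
The plan is to write $G_n = \phi_n \circ G_0$ with
\[
\phi_n(u) := \frac{u}{1 + h(u)/n}, \qquad u \in \RR,
\]
and to translate each clause of \defref{class-D-def} into a pointwise bound on $\phi_n$, $\phi_n'$, $\phi_n''$ combined with the growth estimates in \cndref{AG0-unif-int}. First I would record the elementary identities $|\phi_n(u)| \le n\,h(u)/(n+h(u)) \le n$, which already gives clause (a,i) with $L = n$;
\[
\phi_n'(u) = \frac{1 + [h(u) - uh'(u)]/n}{(1 + h(u)/n)^2},\qquad 0 \le \phi_n'(u) \le \frac{2}{(1+h(u)/n)^2};
\]
and, on $\{|u| \ge 1\}$ where $h(u) = |u|$ and $h(u) - uh'(u) = 0$, the sharper formulas $\phi_n'(u) = (1+|u|/n)^{-2}$ and $\phi_n''(u) = -2\,\mathrm{sgn}(u)/[n(1+|u|/n)^3]$, while $\phi_n', \phi_n''$ remain continuous on $\{|u|\le 1\}$.

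Next, applying the chain rule together with the identity $AG_0 = F_0^* - c_0$ from \propref{qvi-ish} gives
\[
G_n' = \phi_n'(G_0)\, G_0', \qquad AG_n = \phi_n'(G_0)(F_0^* - c_0) + \tfrac{1}{2}\phi_n''(G_0)(\sigma G_0')^2.
\]
Using $(1+|G_0|/n)^{-k} \le n^k(1+|G_0|)^{-k}$ for $|G_0|\ge 1$, verification of clauses (a,ii) and (a,iii) of \defref{class-D-def} reduces to showing that near each boundary the three quantities
\[
\frac{(\sigma G_0')^2}{(1+|G_0|/n)^4},\qquad \frac{c_0}{(1+|G_0|/n)^2},\qquad \frac{1}{n}\cdot\frac{(\sigma G_0')^2}{(1+|G_0|/n)^3}
\]
are dominated by a constant multiple of $1 + c_0$. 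Near $b$ with $c_0(b) = \infty$ this follows from \cndref{AG0-unif-int}(b,i) together with the observation \eqref{bi-implies-ai}; near $b$ with $c_0(b) < \infty$, the two estimates in \cndref{AG0-unif-int}(b,ii) combine, the exponent $\epsilon < 1$ being exactly what is needed to absorb $(1+|G_0|)^{2+\epsilon}$ against $(1+|G_0|/n)^3$. The analogous arguments near $a$ invoke \cndref{AG0-unif-int}(a). On the closed set $\{|G_0| \le 1\}$, $\phi_n'$ and $\phi_n''$ are bounded while $G_0$, $\sigma$, $\mu$ and $c_0$ are continuous on $\overline{\mathcal{E}}$ by \cndref{cost-cnds}(a), so the bounds there follow automatically.

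Finally, I would dispatch the boundary-extension clauses of \defref{class-D-def}(b). At boundaries where $G_0$ is finite (regular, exit, entrance, and sticky boundaries with $c_0 < \infty$), \cndref{AG0-unif-int}(c) supplies a finite limit of $\sigma G_0'$, whence both summands of $AG_n$ and the product $\sigma G_n'$ inherit finite limits via the formula above. At a natural boundary with $c_0$ finite, $|G_0|\to\infty$ forces $\phi_n'(G_0) \to 0$ and $\phi_n''(G_0) \to 0$ at rates fast enough (using $\epsilon < 1$ in \cndref{AG0-unif-int}) to make $AG_n \to 0$, providing the required continuous extension; when $c_0$ is infinite at a natural boundary, no extension is required. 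For a reflecting left boundary, direct differentiation shows that $G_0'(a) = g_0'(a) - F_0^* \zeta'(a)$ is finite by \eqref{G0-prime-bdd-at-a} and \cndref{diff-cnd}(b,i), so $G_n'(a) = \phi_n'(G_0(a))G_0'(a)$ is finite as well. I expect the main obstacle to be the matching step in the second paragraph: in each of the four sub-cases of \cndref{AG0-unif-int} one must pair the right inequality from the hypothesis with the correct power of $(1+|G_0|/n)^{-1}$ coming from $\phi_n'$ or $\phi_n''$, and the exponents $2$, $3$, $4$ appearing in those powers turn out to line up with the exponents $2$, $3$, $2{+}\epsilon$ assumed in \cndref{AG0-unif-int} precisely because of the choices in the definition of $\phi_n$.
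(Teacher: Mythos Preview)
Your proposal is essentially the paper's argument rewritten in the cleaner composition notation $G_n = \phi_n \circ G_0$; the paper carries out the same chain-rule computation explicitly, decomposes $AG_n$ into the analogous three terms, and invokes \cndref{AG0-unif-int} in exactly the places you indicate.

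One point of imprecision to tighten: you state that verifying clause (a,iii) of \defref{class-D-def} reduces to showing the second and third displayed quantities are dominated by a constant multiple of $1+c_0$. That would not suffice, since (a,iii) demands a uniform \emph{constant} bound on $|AG_n|$; near a boundary where $c_0 \to \infty$, a bound by $C(1+c_0)$ is vacuous. Fortunately the references you cite (\cndref{AG0-unif-int}(a,i),(b,i) and \eqref{bi-implies-ai}) actually deliver constant bounds on $c_0/(1+|G_0|)^2$ and $(\sigma G_0')^2/(1+|G_0|)^3$, so after your inequality $(1+|G_0|/n)^{-k} \le n^k(1+|G_0|)^{-k}$ you do obtain the required uniform bound --- just state the target correctly. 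A second minor point: your case split into ``near each boundary'' versus ``$\{|G_0|\le 1\}$'' is not a partition, and your claim that $\sigma,\mu$ are continuous on $\overline{\mathcal E}$ is not assumed (only on $\mathcal I$); the paper instead splits into $(a,y_1)$, $[y_1,z_1]$, $(z_1,b)$ with $y_1,z_1$ from \cndref{AG0-unif-int}, which cleanly separates the boundary estimates from the compact interior where continuity suffices.
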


\begin{lem} \label{AGn-BGn-conv}
Assume Conditions \ref{diff-cnd}, \ref{cost-cnds}, \ref{extra-cnd} and \ref{AG0-unif-int}.  Let $G_n$ be as defined in \eqref{Gn-def}.  Then
$$\lim_{n\rightarrow \infty} AG_n(x) = AG_0(x) \quad \forall x \in {\cal I}$$
and 
$$\lim_{n\rightarrow \infty} BG_n(y,z) = BG_0(y,z) \quad \forall (y,z) \in \overline{\cal R}.$$
Moreover, at each boundary where $c_0$ is finite, $\lim_{n\rightarrow \infty} AG_n \geq AG_0$.
\end{lem}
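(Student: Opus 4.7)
The plan is to treat the three assertions separately, exploiting the explicit decomposition $A\gn = A\gn^{(1)} + e_n^{(2)} + e_n^{(3)}$ displayed in \eqref{AGn-bound} and the definition $\gn = G_0/(1 + \tfrac{1}{n}h(G_0))$. For the pointwise convergence of $B\gn$ on $\overline{\cal R}$, note that $\overline{\cal R} \subseteq {\cal E}^2$ avoids the finite natural boundary points at which $|G_0|$ may be infinite; hence $G_0(x)$ is finite for every $x\in{\cal E}$, so $\tfrac{1}{n}h(G_0(x))\to 0$, which gives $\gn(x)\to G_0(x)$ pointwise on ${\cal E}$, and subtraction yields $B\gn(y,z)\to BG_0(y,z)$. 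For the pointwise convergence of $A\gn$ on ${\cal I}$, fix $x\in{\cal I}$; then $G_0(x)$, $G_0'(x)$, $\sigma(x)$, $c_0(x)$ and the values of $h$, $h'$, $h''$ at $G_0(x)$ are all finite. As $n\to\infty$, the bracketed factor $[1+\tfrac{1}{n}h(G_0(x))-\tfrac{1}{n}G_0(x)h'(G_0(x))]/(1+\tfrac{1}{n}h(G_0(x)))^2$ tends to $1$, so $A\gn^{(1)}(x)\to AG_0(x)$, while $e_n^{(2)}(x)$ and $e_n^{(3)}(x)$ each carry an explicit factor $1/n$ multiplying quantities that, for fixed $x$, stay bounded in $n$, so they vanish in the limit.

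For the boundary inequality I focus on $a$, the argument at $b$ being entirely parallel. \cndref{extra-cnd}(a,ii) provides a pair $(\wdh{y},\wdh{z})\in{\cal R}$ with $F_0^*\le F_0(\wdh{y},\wdh{z})<c_0(a)$, hence $AG_0(a)=F_0^*-c_0(a)<0$. I split into two sub-cases according to whether $|G_0(a)|$ is finite. If $G_0(a)$ is finite then $a$ must be attainable, since at a natural boundary with $c_0(a)<\infty$ the asymptotics \eqref{g0-zeta-ratio-at-a} of \lemref{g0-at-a} combined with $c_0(a)>F_0^*$ force $G_0(a)=-\infty$. Attainability makes $\zeta(a)$ and $g_0(a)$ finite, and \cndref{AG0-unif-int}(c,i) supplies a finite continuous extension of $\sigma G_0'$ at $a$; the pointwise argument from the interior case then extends by continuity, delivering $A\gn(a)\to AG_0(a)$. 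If instead $|G_0(a)|=\infty$, then the proof of \lemref{lem:G0-approx} has already defined $A\gn(a)=0$ for every $n$ as the continuous extension, so $\lim_{n\to\infty} A\gn(a) = 0 > AG_0(a)$.

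\textbf{Main obstacle.} The subtle point is the second sub-case of the boundary inequality: the value $A\gn(a)=0$ used in \lemref{lem:G0-approx} to make $A\gn$ continuous at $a$ is a \emph{convention} rather than the pointwise limit of an $AG_0$-type expression, and one must verify that it is consistent with the desired inequality $\lim_{n\to\infty}A\gn \ge AG_0$. The key is that \cndref{extra-cnd}(a,ii) forces the strict inequality $F_0^*<c_0(a)$ whenever $c_0(a)$ is finite, and this is precisely what pushes $AG_0(a)$ strictly below zero and makes the required inequality automatic; \cndref{extra-cnd}(b,ii) handles the boundary $b$ analogously.
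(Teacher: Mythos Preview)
Your proposal is correct and follows essentially the same route as the paper: pointwise convergence of $\gn$ on $\cal E$ gives the $B\gn$ statement, the three-term decomposition \eqref{AGn-bound} handles $A\gn$ on $\cal I$, and the boundary inequality splits on whether $|G_0|$ is finite there, invoking \cndref{AG0-unif-int}(c) in the finite case and the continuous extension $A\gn=0$ from \lemref{lem:G0-approx} together with $F_0^*<c_0(a)$ in the infinite case.

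Two small presentational points. First, your invocation of \cndref{extra-cnd}(a,ii) to obtain $F_0^*<c_0(a)$ is placed \emph{before} the case split, but that part of \cndref{extra-cnd} is only in force when $a$ is a natural boundary with $c_0(a)<\infty$; when $a$ is regular or exit there is no such pair $(\wdh y,\wdh z)$. This causes no real damage because, as your own sub-case~1 argument shows (via \lemref{g0-at-a} and $c_0(a)>F_0^*$), the case $|G_0(a)|=\infty$ with $c_0(a)<\infty$ forces $a$ to be natural, so \cndref{extra-cnd}(a,ii) is indeed available precisely where you use it---just move the citation inside sub-case~2. Second, calling $A\gn(a)=0$ a ``convention'' slightly undersells it: the proof of \lemref{lem:G0-approx} shows this value is the genuine continuous extension $\lim_{x\to a}A\gn(x)$ under \cndref{AG0-unif-int}(a,ii), which is what makes $\gn\in\cal D$ and hence what makes the comparison with $AG_0(a)$ meaningful.
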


The following proposition gives the first important result involving $AG_n$ and $c_0$.

\begin{prop} \label{AGn-c0-rel}
Assume Conditions \ref{diff-cnd}, \ref{cost-cnds}, \ref{extra-cnd} and \ref{AG0-unif-int} hold.  Let $(\tau,Y) \in {\cal A}_0$ with $J_0(\tau,Y) < \infty$, $X$ satisfy \eqref{controlled-dyn}, $\mu_{0,t}$ be defined by \eqref{mus-t-def} and let $\mu_0$ be any weak limit of $\{\mu_{0,t}\}$ as $t\rightarrow \infty$.  Define $G_n$ by \eqref{Gn-def}.  Then 
$$\liminf_{n\rightarrow \infty} \int_{\overline{\cal E}} (AG_n(x) + c_0(x))\, \mu_0(dx) \geq \int_{\overline{\cal E}} (AG_0(x) + c_0(x))\, \mu_0(dx) \geq F_0^*.$$
\end{prop}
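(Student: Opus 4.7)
The right-hand inequality is in fact an equality. By \propref{qvi-ish}, $AG_0 + c_0 \equiv F_0^*$ on $\mathcal{I}$, and this identity extends continuously to each boundary at which $c_0$ is finite. Since $J_0(\tau,Y) < \infty$, \propref{J-finite} gives $\int c_0\, d\mu_0 \leq J_0(\tau,Y) < \infty$, which forces $\mu_0$ to assign no mass at any boundary where $c_0 = \infty$. Therefore $\int(AG_0 + c_0)\, d\mu_0 = F_0^* \cdot \mu_0(\overline{\mathcal{E}}) = F_0^*$.

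The left-hand inequality will be proved via Fatou's lemma. \lemref{AGn-BGn-conv} provides the pointwise convergence $AG_n(x) \to AG_0(x)$ on $\mathcal{I}$ and the bound $\liminf_n AG_n(x) \geq AG_0(x)$ at boundaries where $c_0$ is finite; coupled with the fact that $\mu_0$ charges no boundary at which $c_0 = \infty$, this yields $\liminf_n (AG_n + c_0) \geq AG_0 + c_0$ $\mu_0$-almost everywhere. The remaining, and principal, task is to produce a $\mu_0$-integrable lower bound for $AG_n + c_0$ that is uniform in $n$.

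To that end, use $AG_0 = F_0^* - c_0$ to rewrite the expression for $AG_n$ derived in \lemref{lem:G0-approx} as
$$AG_n + c_0 = F_0^* A_n + (1-A_n) c_0 + e_n^{(2)} + e_n^{(3)}, \qquad A_n := \frac{1+\frac{1}{n}(h(G_0)-G_0 h'(G_0))}{(1+\frac{1}{n}h(G_0))^2}.$$
The listed properties of $h$ (in particular $h - G_0 h' \geq 0$ and $h + G_0 h' \geq 0$) yield $0 < A_n \leq 1$, so the first two summands are nonnegative. From the definitions of $e_n^{(2)}$ and $e_n^{(3)}$, their sum is nonnegative wherever $G_0 \leq 0$; the only region requiring further work is $\{G_0 > 0\}$, which consists of a neighbourhood of $b$ together with a relatively compact interior set. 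Near $b$, both sub-cases of \cndref{AG0-unif-int}(b) supply the estimate $(\sigma G_0')^2 \leq L(1+|G_0|)(1+c_0)$. Since $|G_0(x)| \geq 1$ in a sufficiently small neighbourhood of $b$, there $h(G_0) = |G_0|$ and $h''(G_0) = 0$, whence $e_n^{(2)} = 0$ and
$$|e_n^{(3)}| \;\leq\; \frac{2(\sigma G_0')^2\, n^2}{(n+|G_0|)^3} \;\leq\; \frac{2L(1+|G_0|)(1+c_0)\, n^2}{(n+|G_0|)^3} \;\leq\; 2L(1+c_0),$$
using the elementary bounds $(1+|G_0|)/(n+|G_0|) \leq 1$ and $n/(n+|G_0|) \leq 1$ (valid for $n\geq 1$). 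On the complementary bounded region, continuity of $\sigma G_0'$ on compact subsets of $\mathcal{I}$ gives $|e_n^{(2)}| + |e_n^{(3)}| = O(1/n)$. Combining the pieces produces a constant $C < \infty$, independent of $n$, with $AG_n + c_0 \geq -C(1+c_0)$ everywhere. Since $\int(1+c_0)\, d\mu_0 \leq 1 + J_0(\tau,Y) < \infty$, Fatou's lemma delivers
$$\liminf_n \int (AG_n + c_0)\, d\mu_0 \geq \int (AG_0 + c_0)\, d\mu_0 = F_0^*.$$

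The delicate point—and the only real obstacle—is the uniform-in-$n$ control of $e_n^{(3)}$ near $b$: the factors of $n$ appearing in both the numerator and the cubed denominator $\phi_n^3$ must be tracked carefully, and it is precisely the combined estimate in \cndref{AG0-unif-int}(b) bounding $(\sigma G_0')^2/[(1+|G_0|)(1+c_0)]$ that makes this tracking possible.
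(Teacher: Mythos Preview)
Your proof is correct and somewhat more streamlined than the paper's. The paper also decomposes $A\gn + c_0 = (A\gn^{(1)}+c_0) + e_n^{(2)} + e_n^{(3)}$ and observes that $A\gn^{(1)}+c_0 = F_0^* A_n + (1-A_n)c_0 \geq 0$, but it then treats each of the three summands separately and region by region: Fatou for $A\gn^{(1)}+c_0$, bounded convergence for $e_n^{(2)}$, and near $b$ the dominated convergence theorem for $e_n^{(3)}$ (using exactly your estimate $|e_n^{(3)}|\le 2L(1+c_0)$ from \cndref{AG0-unif-int}(b)), together with Fatou near $a$ where $e_n^{(3)}\ge 0$. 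You instead assemble a single $\mu_0$-integrable lower bound $-C(1+c_0)$ valid for all $n$ and apply Fatou once to the whole expression, which is cleaner. Both routes hinge on the same key inequality $(\sigma G_0')^2 \le L(1+|G_0|)(1+c_0)$ near $b$.

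One small point to tighten: when you say the complementary region where $G_0>0$ away from $b$ is handled by ``continuity of $\sigma G_0'$ on compact subsets of $\mathcal I$'', this is not quite enough if that region reaches a boundary. This can happen, for instance, when $a$ is attainable with $c_0(a)<\infty$ and $G_0(a)$ happens to be positive, or when $b$ is an entrance boundary with $G_0(b)<\infty$. In those cases it is \cndref{AG0-unif-int}(c) (finiteness of $\lim \sigma G_0'$ at the relevant boundary) that supplies the needed uniform bound on $(\sigma G_0')^2$, and hence the $O(1/n)$ control of $e_n^{(2)}+e_n^{(3)}$ there. With that citation added your argument is complete.
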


\begin{proof}
Let $(\tau,Y)$, $\{\mu_{0,t}\}$ and $\mu_0$ be as in the statement of the proposition.  Using \eqref{AGn-bound} in Appendix B, write $A\gn = AG_n^{(1)} + e_n^{(2)}  + e_n^{(3)}$. Thus  for $x \in \mathcal I$,
\begin{align*}
A\gn^{(1)}(x)+ c_{0}(x)  & = (-c_{0}(x) + F_{0}^{*} ) \frac{ [1 + \frac{1}{n} h(G_0(x)) - \frac{1}{n} G_0(x) h'(G_0(x))]}{(1 + \frac{1}{n} h(G_0(x)))^2}  + c_{0} (x)\\ 
 & = c_{0}(x) \frac{\frac{1}{n^{2}} (h(G_{0}(x)))^{2}  + \frac1n h(G_{0}(x)) + \frac1n G_{0}(x) h'(G_{0}(x))}{ (1+ \frac1n h(G_{0}(x)))^{2}} \\  & \qquad + F_{0}^{*} \frac{ [1 + \frac{1}{n} h(G_0(x)) - \frac{1}{n} G_0(x) h'(G_0(x))]}{(1 + \frac{1}{n} h(G_0(x)))^2}.
 \end{align*}
 As noted earlier, $h$ satisfies $h(x) + xh'(x) > 0$ and $0 \le h(x) - x h'(x) \le \frac{3}{8}$ for all $x\in \R$. Thus both terms in the rewritten expression for  $AG_n^{(1)}(x)+ c_{0}(x) $ are positive on ${\cal I}$ and are non-negative on $\overline{\cal E}$. 
Therefore Fatou's lemma implies
 \begin{align*}
& \liminf_{n\to \infty} \int_{\overline{\mathcal E}} (AG_n^{(1)}(x)+ c_{0}(x))\, \mu_{0}(dx) \\
  &\ \ \ge \int_{\overline{\mathcal E}} \liminf_{n\to \infty} (AG_n^{(1)}(x)+ c_{0}(x))\, \mu_{0}(dx) \\ 
  & \ \ = \int_{ {\mathcal E}} \liminf_{n\to \infty} (AG_n^{(1)}(x)+ c_{0}(x))\, \mu_{0}(dx) + \int_{\overline{\mathcal E}\backslash\mathcal E} \liminf_{n\to \infty} (AG_n^{(1)}(x)+ c_{0}(x))\, \mu_{0}(dx). \end{align*}
When $c_{0}(a)$ or $c_{0}(b)$ are infinite, the limiting measure $\mu_{0}$ does not place any mass at $a$ or $b$.  Recall, $AG_0 = F_0^*-c_0$ and $|G_0(x)| < \infty$ for all $x\in {\cal I}$.  For all $x\in \overline{\mathcal E} $ with $|G_{0}(x)| < \infty$, it follows that $\lim_{n\to \infty} AG_n^{(1)}(x)  + c_{0}(x)= F_{0}^{*}$.  Turning to the boundary $a$, when $|G_{0}(a)| =\infty$ and $c_{0}(a) < \infty$, then $AG_n^{(1)}(a) = 0$ and hence \cndref{extra-cnd}(a,ii) implies that $AG_n^{(1)}(a) + c_{0}(a) > F_{0}^{*}$.  A similar observation applies to the right boundary $b$ when $|G_0(b)| = \infty$.  Therefore it follows that   
\begin{equation} \label{eq-e1+c0-conv} 
\liminf_{n\to \infty} \int_{\overline{\mathcal E}} (AG_n^{(1)}(x)+ c_{0}(x))\, \mu_{0}(dx) \ge   \int_{{\mathcal E}} F_{0}^{*}\, \mu_{0}(dx)  +  \int_{\overline{\mathcal E}\backslash\mathcal E}  F_{0}^{*}\, \mu_{0}(dx)  = F_0^*.
\end{equation} 
 
Now consider the term $e_n^{(2)}$.  Recall, $h''(x) = 0$ for all $x\in \R$ with $|x| \ge 1 $. When $|G_{0}(a)|=\infty$, then $\lim_{x\to a} e_n^{(2)}(x) =0$; when $|G_{0}(a)| < \infty$,   it follows from Condition \ref{AG0-unif-int}(c) that  $\lim_{x\to a} e_n^{(2)}(x) $ exists and is finite. A similar analysis applies to the right boundary $b$.  From the definition of $e_n^{(2)}$, it therefore follows that $|e_n^{(2)}(x)| \le \frac{K}{n} \leq K < \infty$ uniformly in $x$ and $n$. Thus the bounded convergence theorem implies that 
\begin{equation}
\label{eq-e2-conv}
\liminf_{n\to \infty} \int_{\overline{\mathcal E}} e_n^{(2)}(x)\, \mu_{0}(dx)= \int_{\overline{\mathcal E}}  \liminf_{n\to \infty} e_n^{(2)}(x)\, \mu_{0}(dx)=0.
\end{equation}

It remains to analyze the term $e_n^{(3)}(x)$. If $|G_{0}(a)| = \infty$, then as argued in the proof of Lemma \ref{AGn-BGn-conv}, we must have $G_{0}(a) =-\infty$.  
Consequently there exists some $y_1 > a$ such that $G_{0}(x) < 0$ for all $a \le x < y_1$.  Thus on $(a,y_1)$, $h'(G_{0}(x)) < 0$ and $e_n^{(3)}(x) \ge 0$ and these relations extend by continuity to $a$.  Applying Fatou's lemma, we have
\begin{equation}
\label{eq1-e3-conv}
\liminf_{n\to\infty} \int_{[a, y_{1})} e_n^{(3)}(x)\, \mu_{0}(dx) \ge \int_{[a, y_{1})}   \liminf_{n\to\infty} e_n^{(3)}(x)\, \mu_{0}(dx) \ge 0.
\end{equation} If $|G_{0}(a)|  <  \infty$, then \cndref{AG0-unif-int}(c) implies that $\lim_{x\to a} \sigma(x) G_{0}'(x)$ exists and is finite. This, together with the fact that $h'(x)[1+\frac1n h(x) - \frac1n x h'(x)]$ is uniformly bounded for all $x\in \R$ and $n\in \NN$, implies that there exist a   positive constant $K$  and some $y_{1} > a$ such that $|e_n^{(3)}(x)| \le \frac{K}{n} \le K $ for all $a\le x < y_{1}$ and $n\in \mathbb N$.  As a result, \eqref{eq1-e3-conv} still holds (with ``$\ge $'' replaced by ``$=$'') by virtue of the bounded convergence theorem.
For all $x\in \overline{\mathcal E}$, $|h'(G_0(x))| [1 + \frac{1}{n} h(G_0(x)) - \frac{1}{n} G_0(x) h'(G_0(x))] \leq 2$ so by \cndref{AG0-unif-int}(b) there exists some $z_1< b$ such that for all  $z_1 < x \le b$,   
\begin{displaymath}\begin{aligned}
|e_n^{(3)}(x)|&  \le \frac{2(\sigma(x)G_0'(x))^{2}}{n(1+\frac{1}{n} h(G_0(x)))^{3}} \\ 
&= \frac{2(\sigma(x)G_0'(x))^{2}}{(n+|G_0(x)|)(1+ c_{0}(x))} \cdot\frac{1+c_{0}(x)}{(1+\frac{1}{n} h(G_0(x)))^{2}} \le 2 L (1+ c_{0}(x)).
\end{aligned}\end{displaymath}  
The function $(1+ c_{0}(x))$ is integrable with respect to $\mu_{0}$ and hence the dominated convergence theorem implies that 
\begin{displaymath}
\lim_{n\to\infty} \int_{(z_{1}, b]} e_n^{(3)}(x)\, \mu_{0}(dx) = \int_{(z_{1}, b]}  \lim_{n\to\infty} e_n^{(3)}(x)\, \mu_{0}(dx).
\end{displaymath} 
It is immediate from the definition of $e_n^{(3)}$ that $\lim_{n\to\infty} e_n^{(3)}(x)=0$ for $x \in (z_1, b)$; this limit also holds when $c_{0}(b) < \infty$ and $G_{0}(b)  < \infty$. When $c_{0}(b) < \infty$ and $G_{0}(b) = \infty$, then continuity of $e_n^{(3)}$ along with \cndref{AG0-unif-int}(b) implies that $e_n^{(3)}(b) = \lim_{x\rightarrow b} e_n^{(3)}(x) = 0$. When $c_{0}(b) = \infty$, then $\mu_{0}$ places no mass at $b$.  Thus the dominated convergence theorem again implies  
\begin{equation}
\label{eq2-e3-conv}
\lim_{n\to\infty} \int_{(z_{1}, b]} e_n^{(3)}(x)\, \mu_{0}(dx) = \int_{(z_{1}, b]}  \lim_{n\to\infty} e_n^{(3)}(x)\, \mu_{0}(dx) =0.
\end{equation}
Continuity implies that there exists some $K < \infty$ such that $(\sigma(x)G_0'(x))^2<K$ on the interval $[y_1,z_1]$ so $|e_n^{(3)}(x)| < \frac{2K}{n} \leq 2K$ and $\lim_{n\rightarrow \infty} e_n^{(3)}(x) = 0$.  Thus the bounded convergence theorem also implies that 
\begin{equation}
\label{eq3-e3-conv}
\lim_{n\to\infty} \int_{[y_{1}, z_{1}]} e_n^{(3)}(x)\, \mu_{0}(dx) = \int_{[y_{1}, z_{1}]}  \lim_{n\to\infty}e_n^{(3)}(x)\, \mu_{0}(dx)=0.
\end{equation} 
Combining \eqref{eq1-e3-conv}--\eqref{eq3-e3-conv} yields 
\begin{equation}
\label{eq-e3-conv}
\liminf_{n\to \infty} \int_{\overline{\mathcal E}} e_n^{(3)}(x)\, \mu_{0}(dx) \ge 0. 
\end{equation} 
In light of \eqref{AGn-bound} in Appendix B, the result now follows from \eqref{eq-e1+c0-conv}, \eqref{eq-e2-conv}, and \eqref{eq-e3-conv}. 
\end{proof}

We next establish a similar result involving $BG_n$ and $c_1$, though the lack of tightness of $\{\mu_{1,t}\}$ means that the result cannot be expressed in terms of a limiting measure $\mu_1$.

\begin{prop} \label{BGn-c1-rel}
Assume Conditions \ref{diff-cnd}, \ref{cost-cnds}, \ref{extra-cnd} and \ref{AG0-unif-int} hold.  Let $(\tau,Y) \in {\cal A}_0$ with $J_0(\tau,Y) < \infty$ and $X$ satisfy \eqref{controlled-dyn}.  Let $\{t_j:j\in \NN\}$ be a sequence such that $\lim_{j\rightarrow \infty} t_j= \infty$ and 
$$J_0(\tau,Y) = \lim_{j\rightarrow \infty} \mbox{$\frac{1}{t_j}$}\EE\left[\int_0^{t_j} c_0(X(s))\, ds + \sum_{k=1}^\infty I_{\{\tau_k\leq t_j\}} c_1(X(\tau_k-),X(\tau_k))\right].$$ 
For each $j$, define $\mu_{1,t_j}$ by \eqref{mus-t-def} and define $G_n$ by \eqref{Gn-def}.  Then 
\begin{equation}  \label{Fatou-bdd} 
\liminf_{n\rightarrow \infty} \liminf_{j\rightarrow \infty} \int_{\overline{\cal R}} (BG_n(y,z) + c_1(y,z))\, \mu_{1,t_j}(dy\times dz) \geq 0.
\end{equation}
\end{prop}

\begin{proof}
Let $(\tau,Y)$, $X$ and $\{t_j\}$ be as in the statement of the proposition.  Observe that for $(y,z) \in \mathcal{R}$, 
\begin{align} 
\nonumber c_1(y,z) + B\gn(y,z) &= c_1(y,z) + \frac{G_0(z)}{1+\frac{1}{n} h(G_0(z))} - \frac{G_0(y)}{1+\frac{1}{n} h(G_0(y))}  \\
\nonumber &= \frac{BG_{0}(y,z)+c_1(y,z)}{[1+\frac{1}{n} h(G_0(z))][1+\frac{1}{n} h(G_0(y))]}  +\frac{G_{0}( z) h(G_{0}(y)) - G_{0}(y) h(G_{0}(z))}{n [1+\frac{1}{n} h(G_0(z))][1+\frac{1}{n} h(G_0(y))]}\\
\nonumber &  \ \ \ +\;c_1(y,z) \left(1 - \frac{1}{[1+\frac{1}{n} h(G_0(z))][1+\frac{1}{n} h(G_0(y))]}\right) \\
&\geq \frac{G_{0}( z) h(G_{0}(y)) - G_{0}(y) h(G_{0}(z))}{n [1+\frac{1}{n} h(G_0(z))][1+\frac{1}{n} h(G_0(y))]} =: R_n(y,z). \label{eq-Gn defn}
\end{align}
The first summand in the middle relation is positive due to \propref{qvi-ish} and the third summand is easily seen to be positive.  This relation also holds on the boundary $y=z$ and $R_n(y,y) = 0$ for all $y\in \mathcal{E}$. Therefore, 
$$\liminf_{n\rightarrow \infty} \liminf_{j\rightarrow \infty} \int_{\overline{\cal R}} (B\gn + c_1)(y,z)\, \mu_{1,t_j}(dy\times dz) \geq \liminf_{n\rightarrow \infty} \liminf_{j\rightarrow \infty} \int_{\overline{\cal R}} R_n(y,z)\, \mu_{1,t_j}(dy\times dz).$$
Notice that on $\overline{\cal R}$, $R_n$ takes both positive and negative values.

We examine the double limit of the remainder term $R_n$ in several cases.  Recall, \cndref{extra-cnd} implies the existence of $G_0(a) = \lim_{x\rightarrow a} G_0(x)$ in $\RR\cup\{-\infty\}$ and similarly, the existence of $G_0(b) \in \RR\cup \{\infty\}$.
\medskip

\noindent
{\em Case (i): $G_0(a) > -\infty$ and $G_0(b) < \infty$.}\/ In this case, $|G_{0}(z) h(G_{0}(y)) - G_{0}(y) h(G_{0}(z))|$ is bounded by some $K < \infty$ and hence $|R_n(y,z)| \leq \frac{K}{n}$.  Recalling \remref{masses-observation}, $\{\mu_{1,t_j}(\overline{\cal R})\}$ is uniformly bounded and thus 
$$\liminf_{n\rightarrow \infty} \liminf_{j\rightarrow \infty} \int_{\overline{\cal R}} R_n(y,z)\, \mu_{1,t_j}(dy\times dz) \geq \liminf_{n\rightarrow \infty} \liminf_{j\rightarrow \infty} \int_{\overline{\cal R}} \mbox{$-\frac{K}{n}$}\, \mu_{1,t_j}(dy\times dz) = 0.$$

\noindent
{\em Case (ii): $G_0(a) = -\infty$ and $G_0(b) = \infty$.}\/  Since $G_0(a) = -\infty$, there exists some $y_1$, with $y_1 > a$ such that $G_0(x) < -1$ for all $x < y_1$.  Recall $h(x) = |x|$ on $(-\infty,-1)$ and $h(x) \geq |x|$ for all $x$.  Thus it follows that for all $(y,z) \in {\cal R}$ with $y \leq y_1$,
$$R_n(y,z) = \frac{|G_0(y)|(G_0(z) + h(G_0(z)))}{n [1+\frac{1}{n} h(G_0(z))][1+\frac{1}{n} |G_0(y)|]} \geq 0.$$
Similarly, there exists some $z_1$ with $ z_1 < b$ such that $G_0(x) \geq 1$ for $z_1 < x < b$ and it follows that for $(y,z) \in \overline{\cal R}$ with $z > z_1$, 
$$R_n(y,z) = \frac{G_0(z)(h(G_0(y)) - G_0(y))}{n [1+\frac{1}{n} h(G_0(z))][1+\frac{1}{n} h(G_0(y))]} \geq 0.$$
Define $E_1 = \{(y,z)\in \overline{\cal R}: a < y \leq y_1\}$ and $E_2 = \{(y,z)\in \overline{\cal R}: z > z_1\}$ so that $R_n \geq 0$ on $E_1\cup E_2$.  Observe that $E_3:=\overline{\cal R}\backslash (E_1\cup E_2) = \{(y,z)\in \overline{\cal R}: y_1 \leq y \leq z \leq z_1\}$, $R_n$ defined in \eqref{eq-Gn defn} is continuous on this compact set and therefore bounded by some $\frac{K}{n}$ as in case (i).  It then follows that
$$\liminf_{n\rightarrow \infty} \liminf_{j\rightarrow \infty} \int_{\overline{\cal R}} R_n(y,z)\, \mu_{1,t_j}(dy\times dz) 
\geq \liminf_{n\rightarrow \infty} \liminf_{j\rightarrow \infty} \int_{E_3} R_n(y,z)\, \mu_{1,t_j}(dy \times dz) = 0.$$
\medskip

The two remaining cases are handled similarly.
\end{proof}

Pulling these results together, we obtain our main theorem.

\begin{thm} \label{G0-feasible}
Assume Conditions \ref{diff-cnd}, \ref{cost-cnds}, \ref{extra-cnd} and \ref{AG0-unif-int} hold.  Let $(\tau,Y) \in {\cal A}_0$ with $J_0(\tau,Y) < \infty$.  
Then
$$J_0(\tau,Y) \geq F_0^* = F_0(\yzstar,\zzstar) = J_0(\tau^*,Y^*)$$
in which $(\tau^*,Y^*)$ is the ordering policy \eqref{sS-tau-def} using an optimizing pair $(\yzstar,\zzstar) \in {\cal R}$.
\end{thm}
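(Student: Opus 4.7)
The plan is to use the approximating functions $\gn\in\mathcal{D}$ constructed in \lemref{lem:G0-approx} as test functions in the limiting adjoint relation of \propref{bdd-adjoint}, then push $n\to\infty$ using the two key estimates \propref{AGn-c0-rel} and \propref{BGn-c1-rel}. Because $\{\mu_{1,t}\}$ need not be tight as $t\to\infty$, the entire argument is carried through $\liminf$'s on the ordering integrals; no limiting ordering measure is ever extracted.

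I would begin by fixing $(\tau,Y)\in\mathcal{A}_0$ with $J_0(\tau,Y)<\infty$ and choosing a sequence $\{t_j\}$ with $t_j\to\infty$ along which the $\limsup$ defining $J_0(\tau,Y)$ is attained as a genuine limit. Since $J_0(\tau,Y)<\infty$, \propref{mu0-tightness} forces tightness of $\{\mu_{0,t_j}\}$, so after a further subsequence we may assume $\mu_{0,t_j}\Rightarrow \mu_0$ for some $\mu_0\in\mathcal{P}(\overline{\mathcal{E}})$. For fixed $n$, applying \propref{bdd-adjoint} to $\gn\in\mathcal{D}$ forces $\lim_j\int B\gn\, d\mu_{1,t_j}$ to exist and equal $-\int A\gn\, d\mu_0$; since $A\gn$ is bounded and $\mu_0$-almost-everywhere continuous, weak convergence also yields $\int A\gn\, d\mu_{0,t_j}\to \int A\gn\, d\mu_0$. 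Combining, the bracketed sum $\int A\gn\, d\mu_{0,t_j}+\int B\gn\, d\mu_{1,t_j}$ tends to $0$ as $j\to\infty$.

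The next step is the algebraic rearrangement
\begin{equation*}
\int c_0\, d\mu_{0,t_j}+\int c_1\, d\mu_{1,t_j}=\int(A\gn+c_0)\, d\mu_{0,t_j}+\int(B\gn+c_1)\, d\mu_{1,t_j}-\left[\int A\gn\, d\mu_{0,t_j}+\int B\gn\, d\mu_{1,t_j}\right],
\end{equation*}
from which letting $j\to\infty$ gives
\begin{equation*}
J_0(\tau,Y)=\lim_{j\to\infty}\left[\int(A\gn+c_0)\, d\mu_{0,t_j}+\int(B\gn+c_1)\, d\mu_{1,t_j}\right].
\end{equation*}
Using $\lim(x_j+y_j)\ge\liminf x_j+\liminf y_j$, splitting the occupation integral as $\int A\gn\, d\mu_{0,t_j}+\int c_0\, d\mu_{0,t_j}$, and applying weak convergence to the first piece together with the Fatou argument of \propref{J-finite} to the second, I obtain
\begin{equation*}
J_0(\tau,Y)\geq \int(A\gn+c_0)\, d\mu_0+\liminf_{j\to\infty}\int(B\gn+c_1)\, d\mu_{1,t_j}.
\end{equation*}
Finally, taking $\liminf_{n\to\infty}$ and invoking \propref{AGn-c0-rel} on the first summand and \propref{BGn-c1-rel} on the second yields $J_0(\tau,Y)\geq F_0^*+0=F_0^*$. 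The remaining equalities $F_0^*=F_0(\yzstar,\zzstar)=J_0(\tau^*,Y^*)$ are provided by \thmref{F-optimizers} and \propref{prop-sS-cost}, respectively.

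The main obstacle is the absence of any weak limit for $\{\mu_{1,t_j}\}$: without such a measure $\mu_1$, one cannot simply integrate the inequality $BG_0+c_1\ge 0$ of \propref{qvi-ish}. The entire apparatus of the approximations $\gn$, together with the uniform bounds of \cndref{AG0-unif-int} that render the remainder $R_n$ in \propref{BGn-c1-rel} sign-controlled near each boundary of $\overline{\mathcal{E}}$, is engineered precisely to replace weak convergence of the ordering measures by the double $\liminf_n\liminf_j$ argument used above.
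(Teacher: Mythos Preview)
Your proof is correct and follows essentially the same approach as the paper's own proof: choose $\{t_j\}$ realizing the $\limsup$, pass to a subsequence along which $\mu_{0,t_j}\Rightarrow\mu_0$, insert the adjoint identity \eqref{Gn-bar} for each $\gn$ into the cost representation, split the resulting $\liminf$'s, and finish with Propositions~\ref{AGn-c0-rel} and~\ref{BGn-c1-rel}. The only cosmetic difference is ordering: you first fix $n$, derive $J_0(\tau,Y)\ge\int(A\gn+c_0)\,d\mu_0+\liminf_j\int(B\gn+c_1)\,d\mu_{1,t_j}$, and then let $n\to\infty$, whereas the paper writes the double limit $\liminf_n\lim_j$ up front and splits afterward; the underlying estimates and invocations are identical.
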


\begin{proof}
Let $(\tau,Y)\in {\cal A}_0$ satisfy $J_0(\tau,Y) < \infty$.  Let $X$ satisfy \eqref{controlled-dyn}, $\mu_{0,t}$ and $\mu_{1,t}$ be defined by \eqref{mus-t-def} for each $t > 0$.  Let $\{t_j\}$ be a sequence with $t_j \rightarrow \infty$ and
\begin{eqnarray} \label{cost-representation} \nonumber
J_0(\tau,Y) &=& \lim_{j\rightarrow \infty} \mbox{$\frac{1}{t_j}$} \EE\left[\int_0^{t_j} c_0(X(s))\, ds + \sum_{k=1}^\infty I_{\{\tau_k \leq t_j\}} c_1(X(\tau_k-),X(\tau_k))\right] \\
&=& \lim_{j\rightarrow \infty} \left(\int_{\overline{\cal E}} c_0(x)\, \mu_{0,t_j}(dx) + \int_{\overline{\cal R}} c_1(y,z)\, \mu_{1,t_j}(dy\times dz)\right).
\end{eqnarray}
The tightness of $\{\mu_{0,t_j}\}$ implies the existence of a weak limit $\mu_0$; without loss of generality, assume $\mu_{0,t_j} \Rightarrow \mu_0$ as $j\rightarrow \infty$.  \propref{J-finite} and its proof establish that 
$$\int_{\overline{\cal E}} c_0\, d\mu_0 \leq \liminf_{j\rightarrow \infty} \int_{\overline{\cal E}} c_0\, d\mu_{0,t_j} \leq J_0(\tau,Y) < \infty.$$

Since $\gn \in {\cal D}$, $\displaystyle\lim_{j\rightarrow \infty} \int_{\overline{\cal E}} A\gn\, d\mu_{0,t_j} = \int_{\overline{\cal E}} A\gn\, d\mu_0$.  \propref{bdd-adjoint} implies that for each $n$,
\begin{equation} \label{Gn-bar}
\lim_{j\rightarrow \infty} \left(\int_{\overline{\cal E}} A\gn(x)\, \mu_0(dx) + \int_{\overline{\cal R}} B\gn(y,z)\, \mu_{1,t_j}(dy\times dz)\right) = 0
\end{equation}
so adding \eqref{cost-representation} and \eqref{Gn-bar}  and taking the limit inferior as $n\rightarrow \infty$ yields, 
\begin{align*}
J&_0 (\tau,Y) \\ &= \liminf_{n\rightarrow \infty} \lim_{j\rightarrow \infty} \left(\int_{\overline{\cal E}} (A\gn(x) + c_0(x))\, \mu_{0,t_j}(dx) 
+ \int_{\overline{\cal R}} (B\gn(y,z) + c_1(y,z))\, \mu_{1,t_j}(dy\times dz)\right) \\
&\geq \liminf_{n\rightarrow \infty} \liminf_{j\rightarrow \infty} \int_{\overline{\cal E}} (A\gn(x) + c_0(x))\, \mu_{0,t_j}(dx) \\&  \qquad 
+\; \liminf_{n\rightarrow \infty} \liminf_{j\rightarrow \infty} \int_{\overline{\cal R}} (B\gn(x) + c_1(y))\, \mu_{1,t_j}(dy \times dz) \\
&\geq \liminf_{n\rightarrow \infty} \int_{\overline{\cal E}} (A\gn(x) + c_0(x))\, \mu_0(dx)  
+\; \liminf_{n\rightarrow \infty} \liminf_{j\rightarrow \infty} \int_{\overline{\cal R}} (B\gn(x) + c_1(y))\, \mu_{1,t_j}(dy \times dz) \\
&\geq F_0^*;
\end{align*}
Propositions \ref{AGn-c0-rel} and \ref{BGn-c1-rel} establish the last inequality.
\end{proof}

\section{Examples} \label{sect:examples}
For the cost structure, we consider four examples to illustrate some of the different possible ways in which \propref{no-optimizers} and \thmref{G0-feasible} can be employed.  Some of these cost structures include forms that do not satisfy the modularity condition imposed in \cite{helm:15b}, thus illustrating the breadth of application of the current results.

\subsection{Drifted Brownian motion inventory models}
We begin by examining the classical model that has been studied by \cite{bather-66}, \cite{sulem-86}, \cite{DaiY-13-average}, and many others.  In particular, we show that \thmref{G0-feasible} extends the result in \cite{he:arxiv} and verifies optimality as a result of our analytical approach.   We then examine a drifted Brownian motion process with reflection at $\{0\}$ using a non-traditional cost structure.  

\subsubsection{Classical model}
In the absence of ordering, the inventory level process $X_0$ evolves in the state space $\RR$ and satisfies
\begin{equation} \label{dbm-dyn}
dX_0(t) = -\mu\, dt + \sigma\, dW(t), \quad X_0(0) = x_0\in \I:=(-\infty, \infty)
\end{equation}
in which $\mu, \sigma > 0$ and $W$ is a standard Brownian motion process.  Observe the model includes both positive and negative inventory levels indicating real inventory and items that have been back-ordered, respectively.  

To specify the cost structure,  the holding/back-order cost function $c_0$ defined on $\RR$ and the ordering cost function $c_1$ defined on $\overline{\mathcal{R}} = \{(y,z)\in \RR^2: y\leq z\}$ are
\begin{equation} \label{dbm-cost} 
c_0(x) = \left\{\begin{array}{rl}
-c_b\, x, & \quad x < 0, \\
 c_h\, x, & \quad x \geq 0,
\end{array}\right. \hspace{5 mm} \mbox{ and } \hspace{5 mm}
c_1(y,z) = k_1 + k_2(z-y) 
\end{equation}
in which $c_b, c_h, k_1, k_2 > 0$.  The coefficient $c_b$ denotes the back-order cost rate per unit of inventory per unit of time while $c_h$ is the holding cost rate.   The ordering cost function is comprised of fixed plus proportional (to the order size) costs.

Section 4.1 of \cite{helm:15b} verifies Conditions~\ref{diff-cnd} and \ref{cost-cnds} hold and proves the existence of an optimizing pair $(\yzstar,\zzstar) \in \mathcal{R}$ such that $F_0(\yzstar,\zzstar) = F_0^*$.

Since $a=-\infty$ is a natural boundary, $\A_0=\A$.  The following theorem establishes the optimality of the $(\yzstar,\zzstar)$-policy in the class ${\cal A}$ {\em without using an ad hoc comparison result} as in \cite{he:arxiv}.

\begin{thm} \label{dbm-cnd5-5}
Let $(\yzstar,\zzstar)$ be an optimizing pair of $F_0$.  Then the $(\yzstar,\zzstar)$ ordering policy defined by \eqref{sS-tau-def} is optimal in the class $\A$ for the drifted Brownian motion inventory model under the cost structure \eqref{dbm-cost}.
\end{thm}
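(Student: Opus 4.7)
The plan is to apply \thmref{G0-feasible} directly. Its hypotheses are Conditions \ref{diff-cnd}, \ref{cost-cnds}, \ref{extra-cnd} and \ref{AG0-unif-int}; the first three were already verified in the proof of \thmref{dbm-F-optimizers}, so the whole argument reduces to checking \cndref{AG0-unif-int}. Because the boundaries $\pm\infty$ are natural with $c_0(\pm\infty)=\infty$, only parts (a,i) and (b,i) of \cndref{AG0-unif-int} need to be verified; part (c) is vacuous since $|G_0(\pm\infty)|=\infty$.

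I first compute $\zeta$ and $G_0$ explicitly. From the scale and speed densities $s(x)=e^{\alpha x}$, $m(x)=\sigma^{-2}e^{-\alpha x}$ with $\alpha=2\mu/\sigma^2$, one finds $M[u,\infty)=(2\mu)^{-1}e^{-\alpha u}$, and hence $\zeta(x)=(x-x_0)/\mu$. Computing $g_0'(x)=2s(x)\int_x^\infty c_0(v)\,dM(v)$ piecewise on $[0,\infty)$ and $(-\infty,0)$ yields
\[
g_0'(x)=\begin{cases}\dfrac{c_h}{\mu}\,x+\dfrac{c_h\sigma^2}{2\mu^2}, & x\ge 0, \\[2ex]
\dfrac{(c_b+c_h)\sigma^2}{2\mu^2}\,e^{\alpha x}-\dfrac{c_b}{\mu}\,x-\dfrac{c_b\sigma^2}{2\mu^2}, & x<0,\end{cases}
\]
so $G_0'(x)=g_0'(x)-F_0^*/\mu$ grows linearly in $|x|$ at both boundaries, with slopes $c_h/\mu$ at $+\infty$ and $-c_b/\mu$ at $-\infty$. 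Integrating yields the quadratic growth $|G_0(x)|\sim c_hx^2/(2\mu)$ as $x\to+\infty$ and $|G_0(x)|\sim c_bx^2/(2\mu)$ as $x\to-\infty$.

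With these asymptotics in hand, verifying (b,i) at $+\infty$ amounts to the observations $c_0(x)/(1+|G_0(x)|)^2=O(x^{-3})$ and $(\sigma G_0'(x))^2/[(1+|G_0(x)|)(1+c_0(x))]=O(x^{-1})$; both tend to zero and are therefore uniformly bounded on every set of the form $(z_1,\infty)$. Analogously for (a,i) at $-\infty$: $c_0(x)/(1+|G_0(x)|)^2=O(|x|^{-3})$ and $(\sigma G_0'(x))^2/(1+|G_0(x)|)^3=O(x^{-4})$. Continuity on any bounded piece of the half-line then produces a single uniform constant $L$ in either direction, so \cndref{AG0-unif-int} holds.

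Applying \thmref{G0-feasible} therefore yields $J_0(\tau,Y)\ge F_0^*=J_0(\tau^*,Y^*)$ for every $(\tau,Y)\in\mathcal{A}_0$ with $J_0(\tau,Y)<\infty$; the complementary case $J_0(\tau,Y)=\infty$ is trivial. The calculation is essentially mechanical once the explicit forms of $\zeta$ and $g_0'$ are identified, so the main obstacle is purely the bookkeeping of patching the piecewise formula for $g_0$ across the kink of $c_0$ at $x=0$; this causes no genuine difficulty since $g_0$ is globally $C^2$ on $\mathbb{R}$.
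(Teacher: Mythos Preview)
Your proof is correct and follows essentially the same approach as the paper: both reduce to verifying \cndref{AG0-unif-int}, compute $\zeta$ and $g_0$ (or $g_0'$) explicitly for the drifted Brownian motion, and then check the required ratios at $\pm\infty$ using the quadratic growth of $G_0$ and linear growth of $G_0'$ and $c_0$. The only cosmetic difference is that the paper writes out the full explicit ratios before taking limits, whereas you summarize the growth with big-$O$ notation; the content is the same.
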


\begin{proof}
We need to verify \cndref{AG0-unif-int} holds in order to apply \thmref{G0-feasible} to show that the $(\yzstar,\zzstar)$ ordering policy is optimal.

First, define the functions $\tilde{g}_0$ and $\tilde{\zeta}$ on $\RR$ by
\begin{align*}
\tilde{g}_0(x) &= \displaystyle\int_0^x \int_u^\infty 2c_0(v)\, dM(v)\, dS(u) = \begin{cases}
-\frac{c_b}{2\mu}\, x^2 - \frac{\sigma^2 c_b}{2\mu^2}\, x + \frac{\sigma^4(c_b+c_h)}{4\mu^3} \left(\exp\{\frac{2\mu}{\sigma^2} x\} - 1\right), &\!\!\! x < 0, \\
\frac{c_h}{2\mu}\, x^2 + \frac{\sigma^2 c_h}{2\mu^2}\, x, &\!\!\! x \geq 0 \\
\end{cases}\\
\tilde{\zeta}(x) &= \displaystyle\int_0^x \int_u^\infty 2\, dM(v)\, dS(u) \quad\quad \, = \mbox{$\frac{1}{\mu}$} x.
\end{align*}
Note that $\tilde{g}_{0}, \tilde{\zeta} \in C^{2}(\I)$.  It therefore follows from \eqref{g0-fn} that
$$g_0(x) = \tilde{g}_0(x) - \tilde{g}_0(x_0) = \begin{cases}
-\frac{c_b}{2\mu}\, x^2 - \frac{\sigma^2 c_b}{2\mu^2}\, x + \frac{\sigma^4(c_b+c_h)}{4\mu^3} \left(\exp\{\frac{2\mu}{\sigma^2} x\} - 1\right) - \tilde{g}_0(x_0), &  x < 0, \\
\frac{c_h}{2\mu}\, x^2 + \frac{\sigma^2 c_h}{2\mu^2}\, x - \tilde{g}_0(x_0), &  x \geq 0,
\end{cases}
$$ 
and $\zeta(x) = \tilde\zeta(x) - \tilde{\zeta}(x_0) = \frac{1}{\mu} (x - x_0)$.  Defining $\tilde{G}_0 = \tilde{g}_0 - F_0^* \tilde{\zeta}$, it follows that
\begin{eqnarray*}
G_0(x) &=& \tilde{G}_0(x) - \tilde{G}_0(x_0) \\
&=& \begin{cases}
-\frac{c_b}{2\mu}\, x^2 - \left(\frac{\sigma^2 c_b}{2\mu^2}+\frac{F_0^*}{\mu}\right) x + \frac{\sigma^4(c_b+c_h)}{4\mu^3} \left(\exp\{\frac{2\mu}{\sigma^2} x\} - 1\right) - \tilde{G}_0(x_0), & \quad x < 0, \\
\frac{c_h}{2\mu}\, x^2 + \left(\frac{\sigma^2 c_h}{2\mu^2}-\frac{F_0^*}{\mu}\right) x  - \tilde{G}_0(x_0), & \quad x \geq 0.
\end{cases}
\end{eqnarray*}
We examine the cases $x < 0$ and $x > 0$ separately.  For $x > 0$, we have the two conditions 
\begin{equation} \label{dbm-G0-estimates} 
\begin{array}{rcl} \displaystyle 
\frac{(\sigma(x) G_0'(x))^2}{(1+|G_0(x)|)(1+c_0(x))} &=& \displaystyle \frac{\left(\frac{c_h \sigma}{\mu}\, x + \frac{\sigma^3 c_h}{2\mu^2} - \frac{F_0^*\sigma}{\mu}\right)^2}{\left(1+\left|\frac{c_h}{2\mu}\, x^2 + \left(\frac{\sigma^2 c_h}{2\mu^2}-\frac{F_0^*}{\mu}\right) x - \tilde{G}_0(x_0)\right|\right)(1+c_hx)},  \\ \displaystyle 
\frac{c_0(x)}{(1+|G_0(x)|)^2} &=& \displaystyle \frac{c_h x}{\left(1+\left|\frac{c_h}{2\mu}\, x^2 + \left(\frac{\sigma^2 c_h}{2\mu^2}-\frac{F_0^*}{\mu}\right) x - \tilde{G}_0(x_0)\right|\right)^2}, \rule{0pt}{22pt}
\end{array}
\end{equation}
and hence 
$$\lim_{x\to\infty} \frac{(\sigma(x) G_0'(x))^2}{(1+|G_0(x)|)(1+c_0(x))} = 0, \mbox{ and } \lim_{x\rightarrow \infty} \frac{c_0(x)}{(1+|G_0(x)|)^2} = 0.$$
For $x < 0$, the ratios are
\begin{eqnarray*}
\frac{(\sigma(x) G_0'(x))^2}{(1+|G_0(x)|)^3} &=& \frac{\left(\frac{-c_b \sigma}{\mu}\, x - \left(\frac{\sigma^3 c_b}{2\mu^2}+\frac{F_0^*\sigma}{\mu}\right) + \frac{\sigma^2(c_b+c_h)}{2\mu^2} \exp(\frac{2\mu}{\sigma^2}x)\right)^2}{\left(1+\left|\frac{-c_b}{2\mu}\, x^2 - \left(\frac{\sigma^3 c_b}{2\mu^2}+\frac{F_0^*\sigma}{\mu}\right) x + \frac{\sigma^4(c_b+c_h)}{4\mu^3}(\exp(\frac{2\mu}{\sigma^2}x)-1) - \tilde{G}_0(x_0)\right|\right)^3}, \\
\frac{c_0(x)}{(1+|G_0(x)|)^2} &=& \frac{-c_b x}{\left(1+\left|\frac{-c_b}{2\mu}\, x^2 - \left(\frac{\sigma^3 c_b}{2\mu^2}+\frac{F_0^*\sigma}{\mu}\right) x + \frac{\sigma^4(c_b+c_h)}{4\mu^3}(\exp(\frac{2\mu}{\sigma^2}x)-1) - \tilde{G}_0(x_0)\right|\right)^2},
\end{eqnarray*}
yielding 
$$\lim_{x\rightarrow -\infty} \frac{(\sigma(x) G_0'(x))^2}{(1+|G_0(x)|)^3} = 0, \mbox{ and } \lim_{x\rightarrow -\infty} \frac{c_0(x)}{(1+|G_0(x)|)^2} = 0.$$
As a result, both ratios are uniformly bounded and the result holds.
\end{proof}

\subsubsection{Drifted Brownian motion with reflection at $\{0\}$}
For this model, $X$ is a drifted Brownian motion process that reflects at $\{0\}$.  Thus when $X(t) > 0$, $X$ follows the dynamics in \eqref{dbm-dyn} and \cndref{diff-cnd} is established in Section~4.1 of \cite{helm:15b}.  

The cost structure is given by the functions $c_0$ on $[0,\infty)$ and $c_1$ on $\{(y,z): 0\leq y \leq z\}$ by 
\begin{equation} \label{rdbm-cost}
\begin{aligned}
c_0(x) =  k_3 x + k_4 e^{-x}, \ \text{ and }\
c_1(y,z) = k_1 + k_2\sqrt{z-y},
\end{aligned}
\end{equation}
with $k_1,k_2,k_3,k_4 > 0$.  Notice that the holding costs for small inventory levels are penalized by the term $k_4e^{-x}$ which remains bounded at $0$.  Also, the ordering cost function is a concave function of the order size so incorporates savings due to economies of scale.  This $c_1$ function does not satisfy the critical modularity condition required in \cite{helm:15b}.  We begin by establishing the existence of a pair of minimizers for $F_0$.

The analysis below verifies the existence of an optimal $(s,S)$ ordering policy in the class $\mathcal{A}_0$ which does not allow any positive amount of long-term average expected local time of the inventory process at $\{0\}$.

\begin{prop} \label{rdbm-F-optimizers}
There exists an optimizing pair $(\yzstar,\zzstar) \in {\cal R}$ of $F_0$ for the drifted Brownian motion model with reflection having cost structure \eqref{rdbm-cost}.
\end{prop}

\begin{proof}
Verification of Condition~\ref{cost-cnds} follows by straightforward computations so is left to the reader.  By assumption on the model, the boundary $0$ is regular (reflective) while the boundary $\infty$ is natural with $c_0(\infty) = \infty$.  Thus \cndref{extra-cnd} holds and an application of \thmref{F-optimizers} establishes the result.
\end{proof}

\begin{thm}
Let $(\yzstar,\zzstar)$ be as in \propref{rdbm-F-optimizers}.  Then the $(\yzstar,\zzstar)$ ordering policy is optimal in the class $\A_0$ for the reflected drifted Brownian motion inventory model under the cost structure \eqref{rdbm-cost}.
\end{thm}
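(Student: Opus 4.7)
The plan is to apply \thmref{G0-feasible} with $(\yzstar,\zzstar)$ taken from \thmref{rdbm-F-optimizers}. Conditions \ref{diff-cnd}, \ref{cost-cnds}, and \ref{extra-cnd} were already verified in the proof of \thmref{rdbm-F-optimizers}, so the task reduces to establishing \cndref{AG0-unif-int}. The left boundary $a=0$ is reflecting with $c_0(0)=k_4<\infty$, placing it in case (a,ii); the right boundary $b=\infty$ is natural with $c_0(\infty)=\infty$, placing it in case (b,i); and part (c,ii) is vacuous since $G_0(\infty)=+\infty$ (as shown below).

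First I would write $G_0=g_0-F_0^*\zeta$ in closed form by plugging $c_0(v)=k_3 v + k_4 e^{-v}$ and the scale/speed densities $s(v)=e^{\alpha v}$, $m(v)=\sigma^{-2}e^{-\alpha v}$ (with $\alpha=2\mu/\sigma^2$) into \eqref{g0-fn} and \eqref{zeta-fn}. Direct integration yields $\zeta(x)=(x-x_0)/\mu$ and
$$G_0(x)=\frac{k_3}{2\mu}x^2+\Big(\frac{k_3\sigma^2}{2\mu^2}-\frac{F_0^*}{\mu}\Big)x-\frac{2k_4}{\sigma^2(1+\alpha)}e^{-x}+C,$$
where $C$ is chosen so that $G_0(x_0)=0$; differentiating gives $G_0'(x)=\frac{k_3}{\mu}x+\frac{k_3\sigma^2}{2\mu^2}-\frac{F_0^*}{\mu}+\frac{2k_4}{\sigma^2(1+\alpha)}e^{-x}$. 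In particular $G_0,G_0'\in C^\infty([0,\infty))$ with $G_0(x)\sim \frac{k_3}{2\mu}x^2$ and $G_0'(x)\sim\frac{k_3}{\mu}x$ as $x\to\infty$.

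Verifying (a,ii) is then immediate: on any interval $[0,y_1]$ the quantities $G_0$, $G_0'$, and the constant $\sigma$ are all bounded and continuous, so $(\sigma G_0')^2/(1+|G_0|)^{2+\epsilon}$ is bounded for any $\epsilon\in(0,1)$. For (b,i), the asymptotics above give
$$\frac{c_0(x)}{(1+|G_0(x)|)^2}=O(x^{-3}),\qquad \frac{(\sigma G_0'(x))^2}{(1+|G_0(x)|)(1+c_0(x))}=O(x^{-1}),$$
both vanishing as $x\to\infty$, so both expressions are uniformly bounded on $[z_1,\infty)$ for any $z_1>0$ (continuity handles the intermediate range). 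Finally, (c,i) holds because $G_0(0)$ and $\sigma G_0'(0)$ are finite constants read off from the closed form.

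There is no serious obstacle here beyond the explicit computation: the exponential summand $e^{-x}$ is uniformly bounded and decays at infinity, so it contributes only lower-order corrections relative to the polynomial asymptotics that govern \cndref{AG0-unif-int}. Once that condition is in place, \thmref{G0-feasible} delivers $J_0(\tau,Y)\geq F_0^*=J_0(\tau^*,Y^*)$ for every $(\tau,Y)\in\A_0$, proving optimality of the $(\yzstar,\zzstar)$ policy in the full admissible class.
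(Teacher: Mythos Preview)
Your proposal is correct and follows essentially the same route as the paper: reduce to verifying \cndref{AG0-unif-int}, compute $G_0$ in closed form, handle the reflecting boundary $a=0$ via continuity and finiteness of $G_0$, $G_0'$ (giving (a,ii) and (c,i)), and handle $b=\infty$ via the polynomial asymptotics $G_0\sim \frac{k_3}{2\mu}x^2$, $G_0'\sim\frac{k_3}{\mu}x$ (giving (b,i)). The only discrepancy is the constant in front of $e^{-x}$: you obtain $\frac{2k_4}{\sigma^2(1+\alpha)}=\frac{2k_4}{2\mu+\sigma^2}$ whereas the paper records $\frac{2k_4\sigma^2}{(2\mu+\sigma^2)^2}$; your value is the one produced by direct integration of \eqref{g0-fn}, and in any case this constant is irrelevant to the boundedness and asymptotic estimates that the argument actually uses.
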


\begin{proof}
It suffices to show that \cndref{AG0-unif-int} holds.  The proof of \thmref{dbm-cnd5-5} shows that $\zeta(x) = \tilde{\zeta}(x)- \tilde{\zeta}(x_0)$ with $\tilde{\zeta}(x) = \frac{1}{\mu} x$ and, as in the same proof, straightforward computation establishes that
$$\tilde{g}_0(x) = \mbox{$\frac{k_3}{2\mu} x^2 + \frac{k_3\sigma^2}{2\mu^2} x + \frac{2k_4\sigma^2}{(2\mu+\sigma^2)^2}(1-e^{-x})$}$$
so $g_0(x) = \tilde{g}_0(x) - \tilde{g}_0(x_0)$.  Setting $\tilde{G}_0 = \tilde{g}_0 - F_0^* \tilde{\zeta}$, the function $G_0$ is 
$$G_0(x) = \mbox{$\frac{k_3}{2\mu} x^2 + \left(\frac{k_3\sigma^2}{2\mu^2}-\frac{F_0^*}{\mu}\right) x + \frac{2k_4\sigma^2}{(2\mu+\sigma^2)^2}(1-e^{-x})$}- \tilde{G}_0(x_0).$$
At the left boundary $0$, $c_0(0) = k_4$ and $G_0(0) = \frac{2k_4\sigma^2}{(2\mu+\sigma^2)^2} - \tilde{G}_0(x_0)$. It immediately follows from continuity that \cndref{AG0-unif-int}(a,ii) holds and direct computation verifies that \cndref{AG0-unif-int}(c,ii) is satisfied.  We thus need to examine the ratios of \cndref{AG0-unif-int} when $x$ is large.  These ratios differ only slightly from \eqref{dbm-G0-estimates} (equating $c_h$ with $k_3$) and the exponential term does not affect the limits.  Thus \cndref{AG0-unif-int} holds and \thmref{G0-feasible} then yields the results.
\end{proof}

\begin{rem}
Consider models in which $(k_1\vee\frac{k_2}{2})\wedge \frac{k_3}{2\mu} \geq \frac{2k_4\sigma^2}{(2\mu+\sigma^2)^2}$ so that either the fixed or unit ordering cost is expensive relative to the holding cost rate near $0$ and the linear holding cost rate is expensive relative to this same holding cost rate near $0$.  Taking the partial derivative with respect to $y$ yields
\begin{align*}
\frac{\partial F_0}{\partial y}(y,z)  &= \frac{\mu}{z-y} \bigg[\frac{k_1}{ z-y} +  \frac{k_2}{2\sqrt{z-y}} +  \frac{k_3}{2\mu}  (z-y) -  \frac{2k_4\sigma^2}{(2\mu+\sigma^2)^2}  e^{-y} 
+  \frac{2k_4\sigma^2}{(2\mu+\sigma^2)^2}  \frac{e^{-y}-e^{-z}}{z-y}\bigg]\!.
\end{align*}
Since $y \geq 0$, observe that
\begin{itemize}
\item  for all $(y,z) \in {\cal R}$ with $z-y \leq 1$, $\frac{k_1}{z-y} \geq \frac{2k_4\sigma^2}{(2\mu+\sigma^2)^2}\, e^{-y}$ or $\frac{k_2}{2\sqrt{z-y}} \geq \frac{2k_4\sigma^2}{(2\mu+\sigma^2)^2}\, e^{-y}$; and
\item $\frac{k_3}{2\mu}\,(z-y) \geq \frac{2k_4\sigma^2}{(2\mu+\sigma^2)^2}\, e^{-y}$ for all $(y,z) \in {\cal R}$ with $z-y > 1$.  
\end{itemize}
The positivity of the last term in the brackets of $\frac{\partial F_0}{\partial y}$ therefore implies that $\frac{\partial F_0}{\partial y}(y,z) > 0$ for all $(y,z) \in {\cal R}$ and hence $y_0^*=0$.  Thus the optimal ordering policy waits until the inventory hits $0$ before ordering.  (The optimal level $z_0^*$ is determined from the transcendental equation obtained by setting $\frac{\partial F_0}{\partial z}(0,z) = 0$.)
\end{rem}

\subsection{Geometric Brownian motion storage model} \label{sect:gBM}
Without any ordering, the inventory process is a geometric Brownian motion evolving in the state space $(0,\infty)$ and satisfying the stochastic differential equation 
$$dX_0(t) = - \mu X_0(t)\, dt + \sigma X_0(t) \, dW(t), \qquad X(0) = x_0\in \I=(0,\infty),$$
in which $\mu, \sigma > 0$ and $W$ is a standard Brownian motion process; the drift rate is negative.  It is well-known that both boundaries are natural.  Section~4.2 of \cite{helm:15b} shows that this model satisfies Condition~\ref{diff-cnd}.

\subsubsection{Nonlinear holding costs with ordering costs that are a concave function of order size}
The cost functions $c_0$ on $(0,\infty)$ and $c_1$ on $\{(y,z): 0<  y \leq z\}$ are given by 
\begin{equation} \label{gbm-nonlinear}
\begin{aligned}
c_0(x) = k_3 x + k_4 x^\beta, \ \text{ and } \ 
c_1(y,z) = k_1 + k_2\sqrt{z-y}, 
\end{aligned} 
\end{equation}
in which $k_1, k_2, k_3, k_4 > 0$ and $\beta < 0$.  For this $c_0$ function, $k_4 x^\beta$ is extremely costly as the level approaches $0$.  The proof of the next lemma is straightforward so is left to the reader.

\begin{lem}
\cndref{cost-cnds} holds for this geometric Brownian motion storage model having cost structure \eqref{gbm-nonlinear}.
\end{lem}

We now turn to the existence of an optimal $(s,S)$ ordering policy.

\begin{thm}
There exists an optimal $(\yzstar,\zzstar)$ ordering policy in the class ${\cal A}$ for the geometric Brownian motion storage model having nonlinear cost structure given by \eqref{gbm-nonlinear}.
\end{thm}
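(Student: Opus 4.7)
The plan is to invoke \thmref{F-optimizers}, which requires verifying Conditions \ref{diff-cnd}, \ref{cost-cnds}, and \ref{extra-cnd} for this particular geometric Brownian motion model with cost structure \eqref{gbm-nonlinear}. The first two conditions are already on the shelf: \cndref{diff-cnd} was established in the lemma at the start of \sectref{sect:gBM} (both boundaries $0$ and $\infty$ are natural, $0$ attracting and $\infty$ non-attracting), and \cndref{cost-cnds} was verified in the lemma immediately preceding this theorem. Hence only \cndref{extra-cnd} remains.

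For the boundary $a=0$, I would note that since $\beta < 0$, the term $k_4 x^\beta$ diverges to $\infty$ as $x \downarrow 0$, so $c_0(0) = \lim_{x\to 0} (k_3 x + k_4 x^\beta) = \infty$. This places the model in case (a,i) of \cndref{extra-cnd}, so no further verification near $0$ is needed. For the boundary $b=\infty$, the linear term $k_3 x$ alone yields $c_0(\infty) = \infty$, placing the model in case (b,i) of \cndref{extra-cnd}. Thus \cndref{extra-cnd} holds trivially.

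With all three conditions verified, \thmref{F-optimizers} applies and produces a minimizing pair $(\yzstar,\zzstar)\in\mathcal{R}$. Combining this with \propref{prop-sS-cost} shows that the $(\yzstar,\zzstar)$ ordering policy defined via \eqref{sS-tau-def} achieves the optimal long-term average cost $F_0^*$ among all $(s,S)$ policies.

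There is no real obstacle here; the only thing worth a moment's thought is that the assumption $\beta < 0$ is precisely what is needed for $c_0$ to blow up at the left natural boundary, so that the analytically delicate alternatives (a,ii) and (b,ii) of \cndref{extra-cnd} (which would require checking the elasticity-type inequalities \eqref{F-decr-at-a}, \eqref{F-incr-at-b}) can be bypassed entirely. The concavity of $c_1$ in the order size plays no role in the existence argument; it is relevant only in that the general machinery accommodates such non-modular ordering costs, which is one of the main points this example is meant to illustrate.
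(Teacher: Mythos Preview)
Your verification of \cndref{extra-cnd} is correct and matches the paper's reasoning exactly: since $\beta<0$ forces $c_0(0)=\infty$ and the linear term forces $c_0(\infty)=\infty$, both boundaries fall under cases (a,i) and (b,i), and nothing more is needed there.

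However, you have stopped one step short of what the theorem is asserting. In this paper the phrase ``optimal $(\yzstar,\zzstar)$ ordering policy'' means optimal in the full admissible class $\mathcal{A}_0$, not merely among $(s,S)$ policies; compare the parallel theorems in the other example subsections and note that the paper's own proof of this statement invokes \thmref{G0-feasible}, not \thmref{F-optimizers}. Your argument, as written, only reaches the conclusion of \thmref{F-optimizers} (existence of a minimizer of $F_0$ over $\mathcal{R}$), which is the weaker statement.

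To close the gap you must also verify \cndref{AG0-unif-int}. The paper does this by computing $g_0$, $\zeta$, and hence $G_0$ explicitly for this model (with $\rho=\frac{\sigma^2\beta^2}{2}-\mu\beta>0$ one gets $g_0(x)=\frac{k_3}{\mu}(x-x_0)-\frac{k_4}{\rho}(x^\beta-x_0^\beta)$ and $\zeta(x)=\frac{2}{2\mu+\sigma^2}(\ln x-\ln x_0)$), and then checking directly that the ratios in \cndref{AG0-unif-int}(a,i) and (b,i) are bounded near $0$ and $\infty$ respectively. With that in hand, \thmref{G0-feasible} delivers optimality in $\mathcal{A}_0$.
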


\begin{proof}
By \thmref{G0-feasible}, it suffices to show that Conditions \ref{extra-cnd} and \ref{AG0-unif-int} hold.  As previously noted, $0$ and $\infty$ are natural boundaries and we see that $c_0(0) = \infty$ and $c_0(\infty) = \infty$ so \cndref{extra-cnd}(a,i) and (b,i) hold.  

Let $\rho = \frac{\sigma^2 \beta^2}{2} - \mu \beta$ and note $\rho > 0$. Straightforward computations using \eqref{g0-fn} 
determine 
\begin{displaymath}  
\begin{aligned}
& g_0(x) = \mbox{$\frac{k_3}{\mu} (x - x_0) - \frac{k_4}{\rho} (x^\beta - x_0^\beta)$}, \quad  &  \text{ and }\quad 
& \zeta(x) = \mbox{$\frac{2}{2\mu+\sigma^2}$} (\ln(x) - \ln(x_0)) , \quad  &  x \in \I,
\end{aligned} 
\end{displaymath}
and hence
$$G_0(x) = \mbox{$\frac{k_3}{\mu} (x - x_0) - \frac{k_4}{\rho} (x^\beta - x_0^\beta) - \frac{2F_0^*}{2\mu+\sigma^2} (\ln(x) - \ln(x_0))$}.$$
As a result, we have
$$\frac{(\sigma(x) G_0'(x))^2}{(1+|G_0(x)|)^3} = \frac{\left(\frac{k_3\sigma}{\mu} x + \frac{k_4 \sigma (-\beta)}{\rho} x^\beta - \frac{2F_0^*}{\mu+(\sigma^2/2)}\right)^2}{\left(1+\left|\frac{k_3}{\mu} (x - x_0) - \frac{k_4}{\rho} (x^\beta - x_0^\beta) - \frac{2F_0^*}{2\mu+\sigma^2} (\ln(x) - \ln(x_0))\right|\right)^3}$$
so at the left boundary $a=0$, 
\begin{equation} \label{estimate1a}
\lim_{x\rightarrow 0} \frac{(\sigma(x) G_0'(x))^2}{(1+|G_0(x)|)^3} = 0.
\end{equation}
Turning to the boundary $b=\infty$,
\begin{eqnarray*}
\lefteqn{\frac{(\sigma(x) G_0'(x))^2}{(1+|G_0(x)|)(1+c_0(x))}} \\
&=& \frac{\left(\frac{k_3\sigma}{\mu} x + \frac{k_4 \sigma (-\beta)}{\rho} x^\beta - \frac{2F_0^*}{\mu+(\sigma^2/2)}\right)^2}{\left(1+\left|\frac{k_3}{\mu} (x - x_0) - \frac{k_4}{\rho} (x^\beta - x_0^\beta) - \frac{2F_0^*}{2\mu+\sigma^2} (\ln(x) - \ln(x_0))\right|\right)(1+k_3x+k_4x^\beta)}
\end{eqnarray*}
so
\begin{equation} \label{estimate1b}
\lim_{x\rightarrow \infty} \frac{(\sigma(x) G_0'(x))^2}{(1+|G_0(x)|)(1+c_0(x))} = \mbox{$\frac{\sigma^2}{\mu}$}.
\end{equation}

Finally, we have
$$\frac{c_0(x)}{(1+|G_0(x)|)^2} = \frac{k_3 x + k_4 x^\beta}{\left(1+\left|\frac{k_3}{\mu} (x - x_0) - \frac{k_4}{\rho} (x^\beta - x_0^\beta) - \frac{2F_0^*}{2\mu+\sigma^2} (\ln(x) - \ln(x_0))\right|\right)^2}$$
which gives
\begin{equation} \label{estimate3}
\lim_{x\rightarrow 0} \frac{c_0(x)}{(1+|G_0(x)|)^2} = 0 \quad \mbox{and} \quad \lim_{x\rightarrow \infty} \frac{c_0(x)}{(1+|G_0(x)|)^2} = 0.
\end{equation}
As a result, \eqref{estimate1a}, \eqref{estimate1b} and \eqref{estimate3} imply that \cndref{AG0-unif-int}(a,i) and (b,i) hold.
\end{proof}

\begin{rem}
When $k_4=0$ in \eqref{gbm-nonlinear}, the conditions of \propref{no-optimizers}(a) hold so there does not exist any optimal $(s,S)$ ordering policy and the ``no order'' policy is optimal.  See also Section~4.2.2 of \cite{helm:15b} for an alternate analysis.
\end{rem}
\subsubsection{Piecewise linear holding costs with modular ordering costs}
The cost functions $c_0$ on $[0,\infty)$ and $c_1$ on $\{(y,z)\in \RR_+^2: y\leq z\}$ are given by
\begin{equation} \label{gbm-piecewise-linear}
\begin{array}{rcl}
c_0(x) &=& \left\{\begin{array}{ll}
k_4(1-x), & \quad 0 \leq x \leq 1, \\
k_3 (x-1), & \quad x \geq 1
\end{array}\right. \\
c_1(y,z) &=& k_1 + \frac{k_2}{2}(y^{-1/2} - z^{-1/2}) + \frac{k_2}{2} (z-y) \rule{0pt}{12pt}
\end{array}
\end{equation}
with $k_1, k_2, k_3 > 0$ and $k_4 > (\sigma^{2}+ 2\mu)\left[k_{1} + \frac{k_{2}}{2}(1-e^{-\frac12}) + \frac{k_{2}\mu + 2 k_{3}}{2\mu} (e-1)\right] -2k_{3}$.  Note $\lim_{y\rightarrow 0}c_1(y,z) = \infty$, placing a very strong penalty on waiting until the inventory level is nearly $0$.  In this example, the holding cost rate decreases at rate $k_4$ on $[0,1]$ and increases at rate $k_3$ thereafter.  The function $c_1$ is modular in the sense that for any $0 < w \leq x \leq y \leq z$, $c_1(x,z) - c_1(x,y) - c_1(w,z) + c_1(w,y) = 0$.  By writing $c_1$ in the form
$$c_1(y,z) = k_1 + k_2 \left(\frac{\frac{(y^{-1/2}-z^{-1/2})}{z-y}+1}{2}\right) (z-y)$$
the cost per unit portion of the charge is smaller with large values of $y$ and $z$ so encourages the decision maker to order before the inventory falls very low and prefers large orders.

We note that the modularity condition on $c_1$ is a critical condition in \cite{helm:15b}.  That paper also requires $c_0(x)\rightarrow \infty$ as $x\rightarrow 0$, which is violated in \eqref{gbm-piecewise-linear}.

The proof of the next result is straightforward and left to the reader.

\begin{lem}
The geometric Brownian motion model with cost structure given in \eqref{gbm-piecewise-linear} satisfies \cndref{cost-cnds}.
\end{lem}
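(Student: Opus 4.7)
The plan is to verify items (a) and (b) of \cndref{cost-cnds} in turn, noting that for this model $\mathcal{E}=(0,\infty)$ with both $0$ and $\infty$ being natural boundaries, so no reflecting-boundary conditions need to be checked.

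For part (a), I would first observe that $c_0$ is linear, hence continuous, on each of $(0,1]$ and $[1,\infty)$, and that the two pieces agree at $x=1$ with common value $0$; thus $c_0\in C(\mathcal{E})$. The boundary limits $\lim_{x\downarrow 0}c_0(x)=k_4\in\overline{\R^+}$ and $\lim_{x\to\infty}c_0(x)=\infty$ exist, and the second matches the required value at $b=\infty$. Since $0$ is natural rather than reflecting, condition \eqref{G0-prime-bdd-at-a} is vacuous. For the integrability condition \eqref{c0-M-integrable}, I would use the speed density $m(v)=\sigma^{-2}v^{-2-2\mu/\sigma^2}$ recorded earlier for the geometric Brownian motion, split the integral at $v=1$ (when $y<1$), bound $c_0$ by $k_4$ on $[y,1]$, and on $[1,\infty)$ note that $c_0(v)m(v)=k_3\sigma^{-2}(v-1)v^{-2-2\mu/\sigma^2}$ decays like $v^{-1-2\mu/\sigma^2}$ at infinity; since $\mu>0$, this is integrable, which yields \eqref{c0-M-integrable}.

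For part (b), I would first exploit the fact that $\overline{\mathcal{R}}\subset\mathcal{E}^2=(0,\infty)^2$, so the factor $y^{-1/2}$ is evaluated only on a strictly positive range; hence $c_1\in C(\overline{\mathcal{R}})$ by inspection. The bound $c_1\geq k_1$ follows since for $(y,z)\in\overline{\mathcal{R}}$ one has $y\leq z$, giving $y^{-1/2}\geq z^{-1/2}$ and $z-y\geq 0$, so both non-constant summands of $c_1$ are non-negative. Finally, the partial derivatives $\partial_y c_1=-\tfrac{k_2}{4}y^{-3/2}-\tfrac{k_2}{2}$ and $\partial_z c_1=\tfrac{k_2}{4}z^{-3/2}+\tfrac{k_2}{2}$ are continuous in $y$ and $z$ on $(0,\infty)$, which supplies the required $C^1$-regularity in $\mathcal{E}$-neighbourhoods of $a=0$ (fixing $z$) and of $b=\infty$ (fixing $y$).

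None of these checks presents a serious obstacle; the only non-trivial calculation is the tail estimate in \eqref{c0-M-integrable}, which reduces to noting that the exponent $1+2\mu/\sigma^2>1$ at infinity dominates the linear growth of $c_0$, a direct consequence of $\mu>0$. The condition $k_4>(\sigma^2+2\mu)\bigl[k_1+\tfrac{k_2}{2}(1-e^{-1/2})+\tfrac{k_2\mu+2k_3}{2\mu}(e-1)\bigr]-2k_3$ imposed on the model plays no role in verifying \cndref{cost-cnds} and will instead be needed later to secure \cndref{extra-cnd}.
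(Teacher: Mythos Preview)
Your proposal is correct and follows essentially the same approach as the paper's proof, just with more detail: the paper dispatches continuity and $c_0(\infty)=\infty$ in one sentence, invokes the earlier computation \eqref{gbm-c0-M-integrable} (for the linear cost $k_3x$, which dominates the piecewise-linear $c_0$ near $\infty$) to handle \eqref{c0-M-integrable}, and declares the $c_1$ conditions ``clear.'' Your explicit split at $v=1$, the tail-exponent estimate, and the verification that $c_1\ge k_1$ and that the partials $\partial_y c_1$, $\partial_z c_1$ are continuous on $(0,\infty)$ are exactly the unwritten details behind the paper's terse proof; your side remarks that \eqref{G0-prime-bdd-at-a} is vacuous here and that the lower bound on $k_4$ is irrelevant for \cndref{cost-cnds} are also correct.
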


\begin{prop} \label{gbm-pl-F-optimizer}
The geometric Brownian motion model with cost functions given by \eqref{gbm-piecewise-linear} satisfies \cndref{extra-cnd} and hence there exists an optimizing pair $(\yzstar,\zzstar) \in {\cal R}$ of the function $F_0$.
\end{prop}

\begin{proof}
Begin by defining the functions $\tilde{\zeta}(x) = \frac{2}{2\mu+\sigma^2}\ln(x)$,   $x\in(0,\infty)$ and $\tilde{g}_0$ on $(0,\infty)$ by
\begin{eqnarray*}
\tilde{g}_0(x) &=& \int_1^x \int_u^\infty 2c_0(v)\, dM(v)\, dS(u) \\
 &=& \left\{\begin{array}{ll}
\frac{2k_4}{2\mu+\sigma^2} \ln(x) - \frac{k_4}{\mu} (x-1) + \frac{\sigma^4(k_3+k_4)}{\mu(2\mu+\sigma^2)^2} (x^{(2\mu+\sigma^2)/\sigma^2} - 1), & \quad x < 1 \\
\frac{k_3}{\mu} (x-1) - \frac{2k_3}{2\mu+\sigma^2}\ln(x), & \quad x \geq 1. \rule{0pt}{12pt}
\end{array}\right.
\end{eqnarray*}
It then follows from \eqref{g0-fn} 
that $g_0(x) = \tilde{g}_0(x) - \tilde{g}_0(x_0)$ and $\zeta(x) = \tilde{\zeta}(x) - \tilde{\zeta}(x_0)$. 
 Note that $g_0' = \tilde{g}_0'$ and $\zeta' = \tilde{\zeta}'$.

Since $0$ is a natural boundary and $c_0(0) = k_4 < \infty$, it suffices to show that for each $z \in (0,\infty)$, there exists $0 < y_z < z$ such that for all $0 < y <  y_z$,
$$\frac{-\frac{\partial c_1}{\partial y}(y,z) + g_0'(y)}{\zeta'(y)} \geq F_0(y,z)$$
and the existence of $(\overline{y},\overline{z})$ for which $F_0(\overline{y},\overline{z}) < k_4$.  Arbitrarily fix $z \in (0,\infty)$.  Since we need $y$ sufficiently close to $0$, using $y < 1$ gives 
\begin{eqnarray*}
\frac{-\frac{\partial c_1}{\partial y}(y,z) + g_0'(y)}{\zeta'(y)} &=& \frac{\frac{k_2}{2} (y^{-3/2}-1) + \frac{2k_4}{(2\mu+\sigma^2)} y^{-1} - \frac{k_4}{\mu} + \frac{\sigma^2(k_3+k_4)}{\mu(2\mu+\sigma^2)} y^{2\mu/\sigma^2}}{\frac{2}{(2\mu+\sigma^2)} y^{-1}} \\
&=& \mbox{$\frac{k_2(2\mu+\sigma^2)}{4}$} (y^{-1/2}-y) + k_4 + \mbox{$\frac{k_4(2\mu+\sigma^2)}{2\mu}$} y + \mbox{$\frac{\sigma^2(k_3+k_4)}{2\mu}$} y^{(2\mu+\sigma^2)/\sigma^2}.
\end{eqnarray*}
Similarly
\begin{eqnarray*}
F_0(y,z) &=& \frac{k_1 + \frac{k_2}{2}(y^{-1/2}-z^{-1/2}) + \frac{k_2}{2}(z-y)}{\frac{2}{2\mu+\sigma^2}(\ln(z)-\ln(y))} \\
& & +\; \frac{g_0(z) - \frac{2k_4}{2\mu+\sigma^2} \ln(y) + \frac{k_4}{\mu} (y-1) - \frac{\sigma^4(k_3+k_4)}{\mu(2\mu+\sigma^2)^2} (y^{(2\mu+\sigma^2)/\sigma^2} - 1)}{\frac{2}{2\mu+\sigma^2}(\ln(z)-\ln(y))} \\
&=& k_4 + \frac{k_1 + \frac{k_2}{2}(y^{-1/2}-z^{-1/2}) + \frac{k_2}{2}(z-y)}{\frac{2}{2\mu+\sigma^2}(\ln(z)-\ln(y))} \\
& & +\; \frac{g_0(z) - \frac{2k_4}{2\mu+\sigma^2} \ln(z) + \frac{k_4}{\mu} (y-1) - \frac{\sigma^4(k_3+k_4)}{\mu(2\mu+\sigma^2)^2} (y^{(2\mu+\sigma^2)/\sigma^2} - 1)}{\frac{2}{2\mu+\sigma^2}(\ln(z)-\ln(y))}. 
\end{eqnarray*}

Since the leading order terms as $y \rightarrow 0$ are $y^{-1/2}$ and $\frac{y^{-1/2}}{\ln(y)}$, respectively, it follows that 
$$\lim_{y\rightarrow 0} \frac{F_0(y,z)}{\frac{-\frac{\partial c_1}{\partial y}(y,z) + g_0'(y)}{\zeta'(y)}} = 0$$
and hence there exists some $y_z > a$ so that \eqref{F-decr-at-a} holds for all $a < y < y_z$.

Finally observe that the choice $(\overline{y},\overline{z}) = (1,e)$ has 
$$F_{0}(1, e) = \mbox{$(\sigma^{2}+ 2\mu)\left[k_{1} + \frac{k_{2}}{2}(1-e^{-\frac12}) + \frac{k_{2}\mu + 2 k_{3}}{2\mu} (e-1)\right] -2k_{3}$} < k_4$$ 
by the model restriction on $k_4$.
\end{proof}

\begin{thm}
There exists an optimal $(\yzstar,\zzstar)$ ordering policy in the class ${\cal A}$ for the geometric Brownian motion model having nonlinear cost structure given by \eqref{gbm-piecewise-linear}.
\end{thm}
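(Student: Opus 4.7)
The plan is to apply \thmref{G0-feasible} by verifying the single remaining hypothesis, namely \cndref{AG0-unif-int}. Conditions \ref{diff-cnd} and \ref{cost-cnds} were already established in the preceding lemmas for this model, while \cndref{extra-cnd} was verified in \thmref{gbm-pl-F-optimizer}, which also produced an explicit optimizing pair $(\yzstar,\zzstar)\in\mathcal R$. My strategy is to use the closed-form expressions for $\tilde g_0$ and $\tilde \zeta$ already recorded in the proof of \thmref{gbm-pl-F-optimizer} to assemble a piecewise formula for $G_0=g_0-F_0^*\zeta$ on $(0,1)$ and on $[1,\infty)$, then extract the leading asymptotics of $G_0(x)$ and $\sigma(x)G_0'(x)$ at each natural boundary and read off the required bounds.

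At the left boundary $0$, which is natural with $c_0(0)=k_4<\infty$, the relevant condition is \cndref{AG0-unif-int}(a,ii). The dominant term of $G_0(x)$ as $x\to 0$ is $\tfrac{2(k_4-F_0^*)}{2\mu+\sigma^2}\ln x$, so $G_0(x)\to-\infty$ logarithmically provided $k_4-F_0^*>0$. This positivity is exactly what the technical lower bound on $k_4$ in the cost specification guarantees: the bound is designed so that $F_0(1,e)<k_4$, and \thmref{gbm-pl-F-optimizer} gives $F_0^*\leq F_0(1,e)$. Differentiating, $G_0'(x)$ has a leading singularity $\tfrac{2(k_4-F_0^*)}{(2\mu+\sigma^2)}x^{-1}$, which multiplication by $\sigma(x)=\sigma x$ cancels, so $\sigma(x)G_0'(x)$ remains bounded as $x\to 0$. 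Consequently $(\sigma G_0')^2/(1+|G_0|)^{2+\epsilon}$ is $O(|\ln x|^{-(2+\epsilon)})$, bounded on $(0,y_1)$ for any $\epsilon\in(0,1)$. Since $G_0(0)=-\infty$, \cndref{AG0-unif-int}(c,i) is vacuous.

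At the right boundary $\infty$, which is natural with $c_0(\infty)=\infty$, I need \cndref{AG0-unif-int}(b,i). For $x\geq 1$ the closed form gives $G_0(x)\sim (k_3/\mu)x$ and $\sigma(x)G_0'(x)=\sigma xG_0'(x)\sim (\sigma k_3/\mu)x$, hence the first summand in (b,i) scales like $k_3x/(k_3x/\mu)^2=O(1/x)$ and the second like $(\sigma k_3x/\mu)^2/[(k_3x/\mu)(k_3x)]=\sigma^2/\mu$; both are bounded near $\infty$. Continuity fills in the compact middle range $[y_1,z_1]$, and again \cndref{AG0-unif-int}(c,ii) is vacuous because $G_0(\infty)=\infty$.

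The only genuinely substantive step is securing $k_4-F_0^*>0$ at the left boundary; everything else is routine bookkeeping with the explicit formulas for $\tilde g_0$ and $\tilde \zeta$. I expect that step to be the main obstacle in presentation rather than in mathematics, since it is already built into the model's lower bound on $k_4$ via the computation of $F_0(1,e)$ in \thmref{gbm-pl-F-optimizer}. Once \cndref{AG0-unif-int} is in hand, \thmref{G0-feasible} immediately delivers the optimality of the $(\yzstar,\zzstar)$ policy within the class $\A_0$.
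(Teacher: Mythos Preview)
Your proposal is correct and follows essentially the same route as the paper: verify \cndref{AG0-unif-int} using the explicit piecewise formulas for $G_0$ derived from $\tilde g_0$ and $\tilde\zeta$, checking (a,ii) at the natural boundary $0$ (where $c_0(0)=k_4<\infty$) and (b,i) at $\infty$ (where $c_0(\infty)=\infty$), then invoke \thmref{G0-feasible}. Your explicit justification that $k_4-F_0^*>0$ via $F_0^*\le F_0(1,e)<k_4$ is a point the paper leaves implicit, so your write-up is slightly more complete on that step.
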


\begin{proof}
Existence of $(\yzstar,\zzstar) \in {\cal R}$ with $F_0(\yzstar,\zzstar) = F_0^*$ follows from \propref{gbm-pl-F-optimizer}.  To apply \thmref{G0-feasible}, we need to show that \cndref{AG0-unif-int} holds.

Defining $\tilde{G}_0 = \tilde{g}_0 - F_0^* \tilde{\zeta}$, it follows that $G_0(x) = \tilde{G}_0(x) - \tilde{G}_0(x_0)$ and so $G_0'(x) = \tilde{g}_0'(x) - F_0^* \tilde{\zeta}'(x)$.  As a result, for $x$ sufficiently large, we have 
\begin{eqnarray*}
(\sigma(x) G_0'(x))^2 &=& \left(\mbox{$\frac{\sigma k_3}{\mu} x - \frac{2(\sigma k_3+F_0^*)}{2\mu+\sigma^2}$}\right)^2, \\
1+|G_0(x)| &=& 1 + \mbox{$\frac{k_3}{\mu} (x-1) - \frac{2(k_3+F_0^*)}{2\mu+\sigma^2} \ln(x) - \tilde{g}_0(x_0)$} \mbox{ and} \\
c_0(x) &=& k_3(x-1).
\end{eqnarray*}
Thus 
$$\lim_{x\rightarrow \infty} \frac{(\sigma(x)G_0'(x))^2}{(1+|G_0(x)|)(1+c_0(x))} = \mbox{$\frac{\sigma^2}{\mu}$} \quad \mbox{and} \quad \lim_{x\rightarrow \infty} \frac{(\sigma(x)G_0'(x))^2}{(1+|G_0(x)|)^{2+\epsilon}} = 0.$$
For $x$ in a neighbourhood of the left boundary $0$, 
\begin{eqnarray*}
G_0(x) &=& \mbox{$\frac{2(k_4-F_0^*)}{2\mu+\sigma^2} \ln(x) - \frac{k_4}{\mu} (x-1) + \frac{\sigma^4(k_3+k_4)}{\mu(2\mu+\sigma^2)^2} (x^{(2\mu+\sigma^2)/\sigma^2} - 1) - \tilde{g}_0(x_0)$}, \mbox{ and} \\
(\sigma(x) G_0'(x))^2 &=& \left(-\mbox{$\frac{\sigma^2(\sigma^2k_4-2\mu F_0^*)}{\mu(2\mu+\sigma^2)} x + \frac{\sigma^4(k_3+k_4)}{\mu(2\mu+\sigma^2)} x^{(2\mu+\sigma^2)/\sigma^2}$}\right)^2. 
\end{eqnarray*}
It follows that $\lim_{x\rightarrow 0} G_0(x) = -\infty$, and $\lim_{x\rightarrow 0} (\sigma^2(x)G_0'(x))^2=0$, which then implies that \cndref{AG0-unif-int}(a) is satisfied. 
\end{proof}

\subsection{Feller's Branching Diffusion Inventory Model}\label{ex-branching}
In the absence of ordering, let the inventory dynamics be described Feller's branching diffusion given by the following  stochastic differential equation\begin{equation}
\label{eq-Feller-diffusion}
dX_0(t) = -\mbox{$\frac{1}{2}$} X_0(t)\, dt + \sqrt{X_0(t)}\, dW(t), \ X_0(0) =x_{0} \in\I:= (0,\infty).
\end{equation} Suppose without loss of generality that $x_{0} =1$ in this subsection. The scale and speed densities of \eqref{eq-Feller-diffusion} are given by $s(x) = e^{x-1}$ and $m(x) = x^{-1} e^{-x+1}$, respectively. 
Consequently it is straightforward to verify that  $0$ is an attracting point and $\infty$ is a non-attracting point. In fact, one can show that 
$0 $ is an absorbing point, see, for instance, Theorem 13.1 of \cite{Klebaner-05}; 
and $\infty$ is a natural point. Moreover, for any $y \in \I$, we have $M[y,\infty) = \int_{y}^{\infty} \frac{1}{z} e^{1-z} dz \le \frac{e}{y} \int_{y}^{\infty} e^{-z} \, dz < \infty. $This verifies Condition \ref{diff-cnd}. 

Suppose the holding and ordering  costs functions are given by 
\begin{equation}\label{cost-branching-diffusion}
c_{0}(x) = x^{\gamma_{1}} + x^{\gamma_{2}}, \ \ x\in \I, \qquad c_{1}(y,z) : = k_{1} + \widehat c(y,z), \ (y,z) \in \mathcal R,
\end{equation} in which $\gamma_{1} > 0$,  $\gamma_{2} < 0$, $k_{1}>0$, and $\hat c: \mathcal R\mapsto \R_{+}$ is a nonnegative and continuous function. 
It is immediate to show that for any $y\in \I$,  $\int_{y}^{\infty} c_{0}(v) \, dM(v)  < \infty$; establishing Condition \ref{cost-cnds}.
Moreover, Condition \ref{extra-cnd} is trivially satisfied. Therefore, by Theorem \ref{F-optimizers}, there exists a pair $(y_{0}^{*}, z_{0}^{*} ) \in \mathcal R$ such that $F_{0}(y_{0}^{*}, z_{0}^{*}) = F_{0}^{*} = \inf\{F_{0}(y,z): (y,z) \in \overline{\mathcal R} \}.$

\begin{thm}\label{thm-braching-diffusion}
If $\gamma_{2} \le -2$, then there exists an optimal $(\yzstar,\zzstar)$  ordering policy in the class $\A$ for the Branching diffusion model \eqref{eq-Feller-diffusion} having nonlinear cost structure given by \eqref{cost-branching-diffusion}.
\end{thm}
\begin{proof}
The assertion follows from Theorem \ref{G0-feasible} directly if we can verify Condition \ref{AG0-unif-int} holds.  Using the definitions of $\zeta$, $g_{0}$ and $G_{0}$ in \eqref{g0-fn} and \eqref{G0-def}, respectively, we  have 
 \begin{align*}
&G_{0}(x) = 2 \int_{1}^{x}\int_{u}^{\infty} \big(v^{\gamma_{1} -1} + v^{\gamma_{2}-1} - F_{0}^{*} v^{-1} \big)  e^{-v}e^{u } \, dv \, du,\qquad  x \in \I.\end{align*}

Let us first study the asymptotic behaviors of $ \frac{c_{0}(x)}{ (1+|G_{0} (x)|)^{2}}$ and $ \frac{(\sigma(x)G_{0}'(x))^{2}}{(1+|G_{0} (x)|) (1+ c_{0}(x))}$ when $x\to \infty$. 
Since $\lim_{x\to\infty}\int_{x}^{\infty} (v^{\gamma_{1} -1}+ v^{\gamma_{2}-1} - F_{0}^{*} v^{-1}) e^{-v} \, dv =0$, we can  use L'Hospital's Rule to  compute 
  \begin{align*}
 \lim_{x\to\infty} \frac{\int_{x}^{\infty}  (v^{\gamma_{1} -1}+ v^{\gamma_{2}-1} - F_{0}^{*} v^{-1}) e^{-v} \, dv}{x^{\gamma_{1}-1}e^{-x}}& =\lim_{x\to\infty}\frac{-(x^{\gamma_{1}-1} + x^{\gamma_{2}-1} - F_{0}^{*} x^{-1})e^{-x}}{(\gamma_{1}-1)x^{\gamma_{1}-2}e^{-x} - x^{\gamma_{1}-1}e^{-x} }=1.
\end{align*} Hence there exists some $\delta  > 1$ such that 
\begin{displaymath}
\mbox{$\frac{1}{2}$} x^{\gamma_{1}-1}e^{-x}\le \int_{x}^{\infty}  (v^{\gamma_{1} -1}+ v^{\gamma_{2}-1} - F_{0}^{*} v^{-1}) e^{-v} \, dv \le \mbox{$\frac{3}{2}$} x^{\gamma_{1}-1}e^{-x}, \text { for all }x\ge \delta.
\end{displaymath} On the other hand, the integral $ 2 \int_{1}^{\delta}\!\int_{u}^{\infty} \big(v^{\gamma_{1} -1} + v^{\gamma_{2}-1} - F_{0}^{*} v^{-1} \big)  e^{-v}e^{u } \,dv  du$ 
is uniformly bounded by a constant $K = K(\delta, \gamma_{1}, \gamma_{2}, F_{0}^{*})$.  In the rest of  the proof, we shall denote by $K$   a generic positive constant whose exact value may be different from line to line.  
 Thus it follows that for $x \ge \delta$, we have
 \begin{align*}
 |G_{0}(x)|   & \ge 2 \int_{\delta}^{x}  \int_{u}^{\infty} \big(v^{\gamma_{1} -1} + v^{\gamma_{2}-1} - F_{0}^{*} v^{-1} \big)  e^{-v}e^{u } \, dv \, du - K    \\
    &  \ge  2 \int_{\delta}^{x} \mbox{$\frac{1}{2}$} u^{\gamma_{1}-1}e^{-u} e^{u }\, du -K 
     = \frac{x^{\gamma_{1}} - \delta^{\gamma_{1}}}{\gamma_{1}} - K.
\end{align*}
Likewise, for all $x\ge \delta$, we have \begin{align*}
    G_{0}'(x)  & = 2 \int_{x}^{\infty} \big(v^{\gamma_{1} -1} + v^{\gamma_{2}-1} - F_{0}^{*} v^{-1} \big)  e^{-v}e^{x} \, dv       
      \le 2 \cdot \mbox{$\frac{3}{2}$} x^{\gamma_{1}-1}e^{-x}e^{x} =3 x^{\gamma_{1}-1}.
\end{align*}
Hence it follows that  
\begin{align}\label{eq1-cnd56}
  \frac{c_{0}(x)}{ (1+|G_{0} (x)|)^{2}}   &\le   \frac{x^{\gamma_{1}} + x^{\gamma_{2}}}{\big(1 +  |\frac{x^{\gamma_{1}} - \delta^{\gamma_{1}}}{\gamma_{1}}- K|\big)^{2}}  \to 0, && \text{ as }x \to \infty,\\
\label{eq2-cnd56}   \frac{(\sigma(x)G_{0}'(x))^{2}}{(1+|G_{0} (x)|) (1+ c_{0}(x))}   &   \le  \frac{(x^{\frac12} 3 x^{\gamma_{1}-1})^{2}}{\big( 1+  |\frac{x^{\gamma_{1}} - \delta^{\gamma_{1}}}{\gamma_{1}}- K|\big) (1+ x^{\gamma_{1} } + x^{\gamma_{2}})}  \to 0,  &&\text{ as }x \to \infty.  
   \end{align} 
   
   Next we consider the asymptotic behaviors of $\frac{c_{0}(x)}{ (1+|G_{0} (x)|)^{2}} $ and  $\frac{(\sigma(x)G_{0}'(x))^{2}}{(1+|G_{0} (x)|)^{3}}$ when $x\downarrow 0$.
When $ 0 < x \ll 1$, we can write 
\begin{equation}\label{G0-decomposition} \begin{aligned}
 G_{0}(x) =&  -2 \int_{x}^{\kappa}\int_{u}^{\kappa} \big(v^{\gamma_{1} -1} + v^{\gamma_{2}-1} - F_{0}^{*} v^{-1} \big)  e^{-v}e^{u } \, dv\, du \\ & - 2 \int_{x}^{\kappa}\int_{\kappa}^{1} \big(v^{\gamma_{1} -1} + v^{\gamma_{2}-1} - F_{0}^{*} v^{-1} \big)  e^{-v}e^{u } \, dv \, du   \\ & - 2 \int_{\kappa}^{1}\int_{u}^{1} \big(v^{\gamma_{1} -1} + v^{\gamma_{2}-1} - F_{0}^{*} v^{-1} \big)  e^{-v}e^{u } \, dv \, du \\ & +2 \int_{1}^{x}\int_{1}^{\infty} \big(v^{\gamma_{1} -1} + v^{\gamma_{2}-1} - F_{0}^{*} v^{-1} \big)  e^{-v}e^{u } \, dv\, du,\end{aligned}\end{equation} 
 where $\kappa= \kappa(F_{0}^{*},\gamma_{2})  \in (0,1)$ is chosen so that $F_{0}^{*} < \frac12 v^{\gamma_{2}}$ (and hence $v^{\gamma_{1} -1} + v^{\gamma_{2}-1} - F_{0}^{*} v^{-1} > \frac12 v^{\gamma_{2}-1}> 0$) for all $v\in (0, \kappa]$. 
It is easy to see that the second,  third, and fourth integrals of \eqref{G0-decomposition} are uniformly bounded.
Thus    we have
\begin{align}\label{|G0|-lower-bd}
\nonumber  |G_{0}(x)|   
    &   \ge 2    \int_{x}^{\kappa}\int_{u}^{\kappa} \mbox{$\frac{1}{2}$} v^{\gamma_{2}-1} e^{-v}e^{u}\, dv\, du -K \ge e^{-\kappa} e^{x} \int_{x}^{\kappa}\,\mbox{$\frac{1}{\gamma_{2}}$} (\kappa^{\gamma_{2}} - u^{\gamma_{2}}) \, du- K\\
    & = e^{-\kappa} e^{x}\, \mbox{$\frac{1}{\gamma_{2}}$} \bigg(\kappa^{\gamma_{2}} (\kappa-x) - \frac{\kappa^{\gamma_{2}+ 1}-x^{\gamma_{2}+ 1}}{\gamma_{2}+1}\bigg) - K   \ge \frac{e^{-\kappa}x^{\gamma_{2}+ 1}}{\gamma_{2}(\gamma_{2}+1)} - K.
\end{align} 
  Then it follows that $\lim_{x\downarrow 0} G_{0}(x) = -\infty$ and   for all $0 < x \ll 1$, \begin{equation}\label{eq3-cnd56}
 \frac{c_{0}(x)}{ (1+|G_{0} (x)|)^{2}}  \le  \frac{x^{\gamma_{1}} + x^{\gamma_{2}}} {\big(1+ | \frac{e^{-\kappa}x^{\gamma_{2}+ 1}}{\gamma_{2}(\gamma_{2}+1)} - K|\big)^{2}}  \le  \frac{x^{\gamma_{1}} + x^{\gamma_{2}}} {Kx^{2\gamma_{2} + 2} }  \le K.
\end{equation}    

    Next  using the $\kappa \in (0,1)$ chosen before, we write
  \begin{align*}
 \sigma(x) G_{0}'(x)   
    &   =2 x^{\frac12} e^{x} \int_{x}^{\kappa} \big(v^{\gamma_{1} -1} + v^{\gamma_{2}-1} - F_{0}^{*} v^{-1} \big)  e^{-v} \, dv   \\ & \quad  
    + 2 x^{\frac12} e^{x} \int_{\kappa}^{\infty} \big(v^{\gamma_{1} -1} + v^{\gamma_{2}-1} - F_{0}^{*} v^{-1} \big)  e^{-v} \, dv.   
\end{align*}  Observe that the second term above is uniformly bounded for all $x \in (0,1)$. 
 Since $\gamma_{2} \le -2$, $\gamma_{1} > 0$ and $F_{0}^{*} > 0$, we have 
\begin{align*}
    | \sigma(x) G_{0}'(x)  |  & \le  2 e^{1}x^{\frac12}\int_{x}^{\kappa}    2  v^{\gamma_{2}-1}  \, dv  + K= 4 e x^{\frac12} \frac{\kappa^{\gamma_{2}} - x^{\gamma_{2}}}{\gamma_{2}} + K \le  \mbox{$\frac{4 e}{-\gamma_{2}}$} x^{\gamma_{2} + \frac12} + K .    
\end{align*} This, together with \eqref{|G0|-lower-bd}, implies that for $0 < x \ll 1$,
 \begin{align}\label{eq4-cnd56}
   \frac{(\sigma(x)G_{0}'(x))^{2}}{(1+|G_{0} (x)|)^{3}} & \le \frac{\big( \frac{4 e}{-\gamma_{2}} x^{\gamma_{2} + \frac12} + K\big)^{2}}{\big( \frac{e^{-\kappa}x^{\gamma_{2}+ 1}}{\gamma_{2}(\gamma_{2}+1)} - K\big)^{3}} \le K   \frac{x^{2 \gamma_{2} + 1}}{x^{3 \gamma_{2} + 3}} = K x^{-\gamma_{2} -2} \le K. 
\end{align} Equations \eqref{eq1-cnd56}, \eqref{eq2-cnd56}, \eqref{eq3-cnd56}, and \eqref{eq4-cnd56}  establish Condition \ref{AG0-unif-int}. The proof is therefore complete. 
\end{proof}

\begin{rem}\label{rem-about-cnd56}
For $\gamma_2 \in (-2,0)$, similar computations show that $\lim_{x\rightarrow 0}\frac{c_{0}(x)}{(1+|G_{0}(x)|)^{2}} =\infty$.  Thus Condition \ref{AG0-unif-int}(a) fails.  Observe that the speed measure is very large in any neighborhood of $0$ indicating that the inventory moves slowly while $\gamma_2$ determines the penalty rate that the holding cost imposes near $0$.  It is therefore the delicate interplay between the dynamics of the model and the cost structure which determines whether or not Condition \ref{AG0-unif-int} holds.
\end{rem}

\begin{appendices}

\section{Proofs for \sectref{sect:form}} \label{appendix-A} 


\begin{proof}[Proof of \lemref{g0-at-a}]
The arguments related to the right boundary $b$ are very similar to and a bit more straightforward than those related to the left boundary $a$.  We therefore leave the verification of the limits at $b$ to the reader.

Similarly, the argument when $c_0(a) = \infty$ is essentially the same (using the definition of a limit being $\infty$) as when $c_0(a) < \infty$ so we give the details in the latter instance.  Choose $\epsilon > 0$ arbitrarily and let $y_\epsilon \in \mathcal{I}$ be such that for all $a < x < y_\epsilon$, $|c_0(x) - c_0(a)| < \epsilon$.  For $a < y < y_\epsilon$, define $\tau_y$ to be the first hitting time of $\{y\}$ by the process $X_0$ of \eqref{dyn} when $X_0(0) = y_\epsilon$. Since $a$ is a natural boundary, $\lim_{y\rightarrow a} \left(\zeta(y_\epsilon) - \zeta(y)\right) = \lim_{y\rightarrow a} \EE_{y_\epsilon}[\tau_y] = \infty$.  Using the definition of $\zeta$ in \eqref{g0-fn}, observe that  
\begin{align} \label{zeta-diff} 
\zeta(y_\epsilon) - \zeta(y) = \int_y^{y_\epsilon} \int_u^b 2\, dM(v)\, dS(u) 
= \int_y^{y_\epsilon} \int_u^{y_\epsilon} 2\, dM(v)\, dS(u) + 2 M[y_\epsilon,b) S[y,y_\epsilon].
\end{align}
By \cndref{diff-cnd}(b), $M[y_\epsilon,b) < \infty$ and since $a$ is attracting, $\lim_{y\rightarrow a} S[y,y_\epsilon] = S(a,y_\epsilon] < \infty$.  Thus the limit of the double integral as $y \rightarrow a$ is infinite.  

Similarly to \eqref{zeta-diff} using the definition of $g_0$ in \eqref{g0-fn}, we have 
$$g_0(y_\epsilon) - g_0(y) = \int_y^{y_\epsilon} \int_u^{y_\epsilon} 2c_0(v)\, dM(v)\, dS(u) + 2\left(\int_{y_\epsilon}^b c_0(v)\, dM(v)\right) S[y,y_\epsilon].$$ 
Due to the integrability condition \eqref{c0-M-integrable} on $c_0$ relative to the speed measure $M$, the second term on the right-hand side remains bounded as $y \rightarrow a$.  Therefore writing 
\begin{align*}
\frac{g_0(y_\epsilon)-g_0(y)}{\zeta(y_\epsilon)-\zeta(y)} = \frac{\int_y^{y_\epsilon} \int_u^{y_\epsilon} 2c_0(v)\, dM(v)\, dS(u)}{\zeta(y_\epsilon)-\zeta(y)} + \frac{2\left(\int_{y_\epsilon}^b c_0(v)\, dM(v)\right) S[y,y_\epsilon]}{\zeta(y_\epsilon)-\zeta(y)},
\end{align*}
the second summand converges to $0$ as $y\rightarrow a$ so the asymptotics is determined by the first summand.  Using \eqref{zeta-diff}, observe that
\begin{eqnarray*}
\frac{\int_y^{y_\epsilon} \int_u^{y_\epsilon} 2c_0(v)\, dM(v)\, dS(u)}{\zeta(y_\epsilon)-\zeta(y)} &=& \frac{\int_y^{y_\epsilon} \int_u^{y_\epsilon} 2c_0(v)\, dM(v)\, dS(u)}{\int_y^{y_\epsilon} \int_u^{y_\epsilon} 2\, dM(v)\, dS(u)} \cdot \frac{1}{1+\frac{2M[y_\epsilon,b) S[y,y_\epsilon]}{\int_y^{y_\epsilon} \int_u^{y_\epsilon} 2\, dM(v)\, dS(u)}}. 
\end{eqnarray*}
The second factor converges to $1$ as $y\rightarrow a$ so by the choice of $y_\epsilon$, we have
\begin{equation} \label{g0-zeta-ratio-at-a2}
c_0(a) - \epsilon \leq \liminf_{y\rightarrow a} \frac{g_0(y_\epsilon)-g_0(y)}{\zeta(y_\epsilon)-\zeta(y)} \leq \limsup_{y\rightarrow a} \frac{g_0(y_\epsilon)-g_0(y)}{\zeta(y_\epsilon)-\zeta(y)} \leq c_0(a) + \epsilon.
\end{equation}
Recall, $\zeta(y) \rightarrow -\infty$ as $y\rightarrow a$.  In the case $g_0(y)$ remains bounded as $y\rightarrow a$, then $\lim_{y\rightarrow a} \frac{g_0(y)}{\zeta(y)} = 0$.  Moreover, the limit exists in \eqref{g0-zeta-ratio-at-a2} and equals 0 so
$$c_0(a) - \epsilon \leq \lim_{y\rightarrow a} \frac{g_0(y_\epsilon) - g_0(y)}{\zeta(y_\epsilon) - \zeta(y)} = 0 \leq c_0(a) + \epsilon.$$ 
Since $\epsilon$ is arbitrary, it follows that $c_0(a) = 0$ and hence \eqref{g0-zeta-ratio-at-a} holds.  

When $g_0(y) \rightarrow -\infty$ as $y\rightarrow a$, we have
$$c_0(a)-\epsilon \leq \liminf_{y\rightarrow a} \left(\frac{g_0(y)}{\zeta(y)} \cdot \frac{\frac{g_0(y_\epsilon)}{g_0(y)} - 1}{\frac{\zeta(y_\epsilon)}{\zeta(y)}-1}\right) = \liminf_{y\rightarrow a} \frac{g_0(y)}{\zeta(y)}$$
and similarly 
$$\limsup_{y\rightarrow a} \frac{g_0(y)}{\zeta(y)} \leq c_0(a) + \epsilon.$$
Since $\epsilon$ is arbitrary, it again follows that $\lim_{y\rightarrow a} \frac{g_0(y)}{\zeta(y)}  = c_0(a)$, establishing \eqref{g0-zeta-ratio-at-a}.  Picking $z \in \mathcal{E}$ arbitrarily, the identity \eqref{z-fixed-g0-zeta-diff-at-a} now follows.

A similar argument addresses \eqref{double-g0-zeta-diff-at-a}.  Again using the definitions of $g_0$ and $\zeta$, we have
\begin{eqnarray*}
\frac{g_0(z) - g_0(y)}{\zeta(z) - \zeta(y)} &=& \frac{\int_y^z \int_u^{y_\epsilon} 2 c_0(v)\, dM(v)\, dS(u) + \left(\int_{y_\epsilon}^b 2 c_0(v)\, dM(v)\right) S[y,z]}{\int_y^z 2 M[u,y_\epsilon]\, dS(u) + 2 M[y_\epsilon,b) S[y,z]} \\
&=& \frac{\frac{\int_y^z \int_u^{y_\epsilon} 2 c_0(v)\, dM(v)\, dS(u)}{\int_y^z \int_u^{y_\epsilon} 2\, dM(v)\, dS(u)} + \frac{\left(\int_{y_\epsilon}^b 2 c_0(v)\, dM(v)\right) S[y,z]}{\int_y^z 2 M[u,y_\epsilon]\, dS(u)}}{1 + \frac{2 M[y_\epsilon,b) S[y,z]}{\int_y^z 2 M[u,y_\epsilon]\, dS(u)}}.
\end{eqnarray*}
Since $a$ is attracting, $S(a) < \infty$ and since it is a natural boundary, $\zeta(a) = -\infty$.  Together, these imply that $\lim_{y\rightarrow a} M[y,z] = \infty$; this holds, in particular, when $z=y_\epsilon$.  Examining the second summands in both the numerator and denominator above, set $K$ below to be $K = M[y_\epsilon,b)$ for the denominator term and $K=\int_{y_\epsilon}^b c_0(v)\, dM(v)$ for the numerator summand.  Then 
$$\frac{2K S[y,z]}{\int_y^z 2 M[u,y_\epsilon]\, dS(u)} \leq \frac{2K S[y,z]}{2 M[z,y_\epsilon] S[y,z]} $$
and the right-hand side converges to $0$ as $z\rightarrow a$.  Thus for all $y < z \leq y_\epsilon$, 
$$c_0(a) - \epsilon \leq \liminf_{(y,z)\rightarrow (a,a)} \frac{g_0(z) - g_0(y)}{\zeta(z) - \zeta(y)} \leq \limsup_{(y,z)\rightarrow (a,a)} \frac{\int_y^z \int_u^{y_\epsilon} 2 c_0(v)\, dM(v)\, dS(u)}{\int_y^z \int_u^{y_\epsilon} 2\, dM(v)\, dS(u)} \leq c_0(a) + \epsilon$$
and the result follows since $\epsilon$ is arbitrary.
\end{proof}

\begin{proof}[Proof of \thmref{F-optimizers}]
We examine the behaviour of $F_0$ at each boundary using the same order of analysis as in the proof of Proposition~3.5 of \cite{helm:15b}.
\smallskip

\noindent
{\em Case i: the diagonal $z=y$, excluding natural boundaries.}\/ By definition, $F_0(y,y) = \infty$ which is not the minimal value of $F_0$. \smallskip

\noindent
{\em Case ii: the boundary $z=b$, excluding $(a,b)$ when $a$ is natural.}\/
Suppose $b$ is an entrance boundary so $\zeta(b) < \infty$.  It follows that for each $y \in \mathcal{E}$ with $y < b$, $F_0(y,b) = \lim_{z\rightarrow b} F_0(y,z)$ exists in $\overline{\RR^+}$.  The optimization of $F_0$ therefore includes the possibility $z=b$.

Next consider when $b<\infty$ is a natural boundary with $c_0(b) < \infty$ for which \cndref{extra-cnd}(b,ii) holds.  Then for each $y \in \mathcal{E}\backslash\{b\}$, $F_0(y,\cdot)$ is increasing for $z > z_y$ and thus the infimal value of $F_0$ is not obtained in the limit as $z\rightarrow b$.

Now assume $b$ is a natural boundary for which $c_0(b) = \infty$.  Case (ii) of the proof of Proposition 3.5 of \cite{helm:15b} establishes that $F(y,b) = \infty$ for each $y \in {\cal E}\backslash\{b\}$.
\smallskip

\noindent
{\em Case iii: the vertex $(b,b)$ with $b$ being natural.}\/  Using \eqref{double-g0-zeta-diff-at-a} of \lemref{g0-at-a},
$$\liminf_{(y,z)\rightarrow (b,b)} F_0(y,z) \geq \liminf_{(y,z)\rightarrow (b,b)} \frac{g_0(z)-g_0(y)}{\zeta(z)-\zeta(y)} = c_0(b).$$
Thus the infimum does not occur in the limit as $(y,z) \rightarrow (b,b)$ since $(\wdt{y},\wdt{z})$ of \cndref{extra-cnd}(b,ii) satisfies $F_0(\wdt{y},\wdt{z}) < c_0(b)$ or $c_0(b) = \infty$ in \cndref{extra-cnd}(b,i).  
\smallskip

\noindent
{\em Case iv: the boundary $y=a$, excluding $(a,b)$ when $b$ is natural.}\/ 
As shown in case (iv) of the proof of Proposition 3.5 in \cite{helm:15b}, when $a$ is a regular or an exit boundary, $F_0(a,z)$ exists in $\overline{\RR^+}$ for each $z\in \mathcal{E}$ with $z > a$.  The optimization of $F_0$ therefore includes values with $y=a$.
This proof also shows that when $a$ is a natural boundary for which $c_0(a) = \infty$,  
\begin{equation}
\label{asymp-at-a2}
 \lim_{y\rightarrow a} F_0(y,z) \ge \lim_{y\rightarrow a}c_{0} (y) = \infty.
\end{equation} 

Proposition 3.5 of \cite{helm:15b} does not consider the case when $a>-\infty$ is a natural boundary with $c_0(a) < \infty$ so a proof is required.  Since \cndref{extra-cnd}(a,ii) holds, $F_0(\cdot,z)$ is strictly decreasing for each $z\in \mathcal{E}$ and hence the infimal value of $F_0$ does not occur in the limit as $y \rightarrow a$.
\smallskip

\noindent
{\em Case v: the vertex $(a,b)$ with both boundaries being natural.}\/ 
When $c_0(a) = \infty$ or $c_0(b)=\infty$, case (v) of the proof of Proposition 3.5 in \cite{helm:15b} establishes that 
$$\lim_{(y,z) \rightarrow (a,b)} F_0(y,z) = \infty.$$

We therefore need to consider models in which $a > -\infty$ and $b < \infty$ with $c_0(a), c_0(b) < \infty$.  Note that both \cndref{extra-cnd}(a,ii) and (b,ii) hold.

Choose $\epsilon$ such that $0 < \epsilon < (c_0(a)-F_0(\wdh{y},\wdh{z})) \wedge (c_0(b)-F_0(\wdt{y},\wdt{z}))$.  Again, \lemref{g0-at-a} establishes that there exists some $y_\epsilon, z_\epsilon \in \mathcal{I}$ such that 
$$\frac{g_0(y_\epsilon) - g_0(y)}{\zeta(y_\epsilon) - \zeta(y)} \geq c_0(a) - \epsilon \quad \forall y \in (a,y_\epsilon)\quad \mbox{and} \quad \frac{g_0(z)-g_0(z_\epsilon)}{\zeta(z)-\zeta(z_\epsilon)} \geq c_0(b) - \epsilon \quad \forall z \in (z_\epsilon,b).$$ 
Thus for each $y$ with $a < y < y_\epsilon$ and $z$ such that $z_\epsilon < z < b$, we have
\begin{eqnarray*}
F_0(y,z) &\geq& \frac{g_0(z)-g_0(y)}{\zeta(z)-\zeta(y)} \\
&=& \frac{g_0(z) - g_0(z_\epsilon) + g_0(z_\epsilon) - g_0(y_\epsilon) + g_0(y_\epsilon) - g_0(y)}{\zeta(z) - \zeta(y)} \\
&=& \frac{g_0(z) - g_0(z_\epsilon)}{\zeta(z)-\zeta(z_\epsilon)} \cdot \frac{\zeta(z)-\zeta(z_\epsilon)}{\zeta(z) - \zeta(y)} + \frac{g_0(z_\epsilon) - g_0(y_\epsilon)}{\zeta(z) - \zeta(y)} + \frac{g_0(y_\epsilon) - g_0(y)}{\zeta(y_\epsilon)-\zeta(y)}\cdot \frac{\zeta(y_\epsilon)-\zeta(y)}{\zeta(z) - \zeta(y)} \\
&\geq& [c_0(b)-\epsilon] \cdot \frac{\zeta(z)-\zeta(z_\epsilon)}{\zeta(z) - \zeta(y)} + \frac{g_0(z_\epsilon)-g_0(y_\epsilon)}{\zeta(z) - \zeta(y)} + [c_0(a)-\epsilon] \cdot \frac{\zeta(y_\epsilon)-\zeta(y)}{\zeta(z) - \zeta(y)} \\
&\geq& [(c_0(a)-\epsilon)\wedge(c_0(b)-\epsilon)]\cdot \frac{\zeta(z)-\zeta(z_\epsilon) + \zeta(y_\epsilon)-\zeta(y)}{\zeta(z) - \zeta(y)}. 
\end{eqnarray*}
Since $\lim_{(y,z)\rightarrow (a,b)} (\zeta(z) - \zeta(y)) = \infty$, it follows that
\begin{equation} \label{ab3}
\liminf_{(y,z) \rightarrow (a,b)} F_0(y,z) \geq [(c_0(a)-\epsilon)\wedge(c_0(b)-\epsilon)] > F_0(\overline{y},\overline{z}) \wedge F_0(\widetilde{y},\widetilde{z}).
\end{equation}
Therefore the infimum of $F_0$ is not obtained in the limit as $(y,z)\rightarrow (a,b)$.  
\smallskip

\noindent
{\em Case vi: the vertex $(a,a)$ with $a$ being natural.}\/  This argument is essentially the same as for the vertex $(b,b)$ using \lemref{g0-at-a} and \cndref{extra-cnd}(a,ii).

In summary, we have established that the infimal value of $F_0$ does not occur at any boundary so there exists some $(\yzstar,\zzstar) \in \mathcal{R}$ which minimizes the function $F_0$.  
\end{proof}

\section{Proofs for \sectref{sect:optim}} \label{appendix-B} 

\begin{proof}[Proof of \lemref{lem:G0-approx}]
Fix $n \in \NN$ arbitrarily.  To show that $\gn$ is bounded, notice first that on the set $\{x\in \mathcal{E}: |G_0(x)| < 1\}$, 
$$|\gn(x)| = \frac{|G_0(x)|}{1+\frac{1}{n} h(G_0(x))} < 1.$$ 
On the set $\{x \in \mathcal{E}: |G_0(x)| \geq 1\}$, we have
$$|\gn(x)| = \frac{n}{\frac{n}{|G_0(x)|}+1} \leq n.$$
Combining these estimates indicates that $|\gn(x)| \leq n$.  In addition, when $a$ and $b$ are finite and natural, $G_0(x) \rightarrow \pm \infty$ implies $\lim_{x\rightarrow a} G_n(x) = -n$ and $\lim_{x\rightarrow b} G_n(x) = n$, respectively, so we may define $G_n$ at such boundaries to be the appropriate limiting value.

Straightforward calculations establish that for $x \in {\cal E}$,
\begin{eqnarray} \label{G0-prime-expression} 
\gn'(x) &=& \frac{G_0'(x) [1 + \frac{1}{n} h(G_0(x)) - \frac{1}{n} G_0(x) h'(G_0(x))]}{(1 + \frac{1}{n} h(G_0(x)))^2}, \quad \mbox{and} \rule[-18pt]{0pt}{18pt} \\ \nonumber 
\gn''(x)&=& \frac{G_0''(x)[1 + \frac{1}{n} h(G_0(x)) - \frac{1}{n} G_0(x) h'(G_0(x))]}{(1+\frac{1}{n} h(G_0(x)))^{2}} - \frac{(G_0'(x))^{2} G_0(x) h''(G_0(x))}{n(1+\frac{1}{n} h(G_0(x)))^{2}} \rule[-18pt]{0pt}{18pt} \\ \nonumber
& & \quad -\; \frac{2 (G_0'(x))^{2} h'(G_0(x)) [1 + \frac{1}{n} h(G_0(x)) - \frac{1}{n} G_0(x) h'(G_0(x))]}{n (1+ \frac{1}{n} h(G_0(x)))^{3}}.
\end{eqnarray}
Observe that
$$(\sigma(x) \gn'(x))^2 = \frac{(\sigma(x) G_0'(x))^2}{(n + h(G_0(x)))^3} \cdot \frac{n^3[1 + \frac{1}{n} h(G_0(x)) - \frac{1}{n} G_0(x) h'(G_0(x))]^2}{1 + \frac{1}{n} h(G_0(x))}.$$
By \cndref{AG0-unif-int} (and \eqref{bi-implies-ai} when necessary), there exist $y_1 > a$, $z_1 < b$ and $L < \infty$ such that the first factor is bounded for all $x \in [y_1,z_1]^c$.  The continuity of this factor implies it is also bounded on $[y_1,z_1]$.  Since the second factor is bounded by $4n^3$, $(\sigma \gn')^2$ is bounded.

Next, we show that $A\gn$ is bounded.  Using the expressions for $\gn'$ and $\gn''$, it follows that 
\begin{eqnarray} \label{AGn-bound} \nonumber
A\gn(x) &=& \frac{AG_0(x) [1 + \frac{1}{n} h(G_0(x)) - \frac{1}{n} G_0(x) h'(G_0(x))]}{(1 + \frac{1}{n} h(G_0(x)))^2} - \frac{(\sigma(x)G_0'(x))^{2} G_0(x) h''(G_0(x))}{2n(1+\frac{1}{n} h(G_0(x)))^{2}} \\ \nonumber
& & \qquad -\; \frac{(\sigma(x)G_0'(x))^{2} h'(G_0(x)) [1 + \frac{1}{n} h(G_0(x)) - \frac{1}{n} G_0(x) h'(G_0(x))]}{n (1+ \frac{1}{n} h(G_0(x)))^{3}} \\
&=:& A\gn^{(1)}(x) + e_n^{(2)}(x) + e_n^{(3)}(x).
\end{eqnarray}
We examine these terms carefully.  Recall, $AG_0(x) = F_0^* - c_0(x)$ for $x \in {\cal I}$.  

Consider first the case in which $c_0(a)=\infty$; $c_0(b)=\infty$ is handled similarly so is omitted. Let $y_1 > a$ be as in \cndref{AG0-unif-int}(a,i).  For $x\leq y_1$ such that $|G_0(x)| \geq 1$, $h(G_0(x)) = |G_0(x)|$ and $h''(G_0(x)) = 0$ so $A\gn^{(1)}(x)$ and $e_n^{(3)}(x)$ are uniformly bounded due to \cndref{AG0-unif-int}(a,i) while $e_n^{(2)}(x)=0$.  For $x \leq y_1$ such that $|G_0(x)| < 1$, \cndref{AG0-unif-int}(a,i) implies that both $c_0(x)$ and $(\sigma(x) G_0'(x))^2$ are uniformly bounded.  Since $h(x)$ and $h''(x)$ are also uniformly bounded for $|x| < 1$, it follows that $A\gn$ is bounded on $(a,y_1)$.

Now consider the case in which $c_0(a) < \infty$; again, the case $c_0(b) < \infty$ is handled similarly.  $A\gn^{(1)}$ remains bounded on $[a,y_1)$ since continuity of $c_0$ at $a$ implies there is some neighbourhood $[a,y_2)$ of $a$ such that $\left|\frac{2(F_0^*-c_0(x))}{(1+\frac{1}{n}h(G_0(x)))^2}\right| \leq 2c_0(a) + 2F_0^*+ 1$ for $x \in [a,y_2)$.  When $y_2 < y_1$, continuity implies $A\gn^{(1)}$ remains bounded on $[y_2,y_1)$.  Essentially the same analysis as above but using \cndref{AG0-unif-int}(a,ii) handles $e_n^{(2)}$ and $e_n^{(3)}$ on $(a,y_1)$. Thus $AG_n$ is bounded in $(a,y_1)$ and this relation extends to include $x=a$.  

Similar arguments with regard to the boundary $b$ show that $A\gn$ is bounded in $(z_1,b)$, extending to include $b$ under \cndref{AG0-unif-int}(b,ii) when $c_0(b) < \infty$.  By continuity, $A\gn$ is bounded on $[y_1,z_1]$, establishing that $A\gn$ is bounded.

We now verify the boundary behaviour of \defref{class-D-def}.  
Regarding $A\gn$, we must show it is continuous at finite natural boundaries for which $c_0$ is finite.  We examine the boundary $a$; the analysis for $b$ is similar.  If $|G_0(a)| < \infty$, then \cndref{AG0-unif-int}(c) implies $AG_n(a) = \lim_{x\rightarrow a} AG_n(x)$ exists and is finite.  Now consider the case in which $|G_0(a)| = \infty$.  Then for $x$ sufficiently small (without loss of generality $x \leq y_1$), $|G_0(x)| \geq 1$ so $h(G_0(x)) = |G_0(x)|$ and $h''(G_0(x)) = 0$.  It then immediately follows that $e_n^{(2)}(x)=0$ while $A\gn^{(1)}(x)$ converges to $0$.  For $e_n^{(3)}(x)$, \cndref{AG0-unif-int}(a,ii) implies that for $x \leq y_1$, 
$$\frac{(\sigma(x)G_0'(x))^{2}}{(1+ \frac{1}{n} h(G_0(x)))^{3}} \leq \frac{(\sigma(x)G_0'(x))^{2} }{(1+ \frac{1}{n} |G_0(x)|)^{2 + \epsilon}} \cdot \frac{1}{(1+ \frac{1}{n} |G_0(x)|)^{1-\epsilon}} $$
which again converges to $0$ as $x\rightarrow a$.  Thus defining $AG_n(a) = 0$ makes $AG_n$ continuous at $a$.

Now, consider the case of $a$ being a reflecting boundary.  Using the definitions of $g_0$, $\zeta$ and $G_0$ in \eqref{g0-fn}  
and \eqref{G0-def}, respectively, it follows that
$$G_0'(x) = s(x) \int_x^b 2(c_0(v) - F_0^*)\, dM(v).$$
\cndref{AG0-unif-int}(c,ii) then implies that $|G_0'(a)| < \infty$.  From the expression for $\gn'(x)$ in \eqref{G0-prime-expression}, $\lim_{x\rightarrow a} \gn'(x)$ exists and is finite.

Finally, when $a$ is a sticky boundary and $c_0(a) < \infty$, \cndref{AG0-unif-int}(c,i) along with \eqref{G0-prime-expression} establishes that \defref{class-D-def}(b,iii) holds.
\end{proof}

\begin{proof}[Proof of \lemref{AGn-BGn-conv}]
We consider the convergence of $G_n$ first.  For all $x \in {\cal I}$, $|G_0(x)| < \infty$, so 
$$\lim_{n\rightarrow \infty} G_n(x) = \lim_{n\rightarrow \infty} \frac{G_0(x)}{1+\frac{1}{n} h(G_0(x))} = G_0(x).$$
This result holds also at the boundaries whenever $G_0$ is bounded there.  

Now consider the case in which $|G_0(a)| = \lim_{x\rightarrow a} |G_0(x)| = \infty$.  Observe that for $x \in {\cal I}$,
\begin{equation} \label{G0-integral-form}
G_0(x) = \int_{x_0}^x \int_u^b 2[c_0(v)-F_0^*]\, dM(v)\, dS(u).
\end{equation} 
When $a$ is attainable, $\zeta(a) > -\infty$ so if $c_0$ were bounded in a neighbourhood of $a$, $G_0$ would also be bounded in the neighbourhood.  Thus $c_0(x) \rightarrow \infty$ as $x \rightarrow a$ and as a result, $G_0(a) = \lim_{x\rightarrow a} G_0(x) = -\infty$.  When $a$ is a natural boundary, $\zeta(a) = -\infty$ and \cndref{extra-cnd} implies $c_0(a) > F_0^*$ so again $G_0(a) = -\infty$.  Then regardless of the type of boundary, for $x$ sufficiently close to $a$, $G_0(x) \leq -1$.  Since $h(x) = |x|$ on the set where $|x| \geq 1$, 
$$\lim_{x\rightarrow a} G_n(x) = \lim_{x\rightarrow a} \frac{G_0(x)}{1+\frac{1}{n}|G_0(x)|} = -n =: G_n(a)$$
so again $\lim_{n\rightarrow \infty} G_n(a) = -\infty = G_0(a).$

A similar argument at the boundary $b$ establishes that $\lim_{n\rightarrow \infty} G_n(b) = G_0(b)$ and therefore $G_n$ converges pointwise to $G_0$.  It is therefore immediate that $BG_n$ converges pointwise to $BG_0$ on $\overline{\cal R}$.

Turning to $AG_n$, recall from \eqref{AGn-bound} that $AG_n = AG_n^{(1)} + e_n^{(2)} + e_n^{(3)}$.  
Since $G_0 \in C^2({\cal I})$, $G_0(x)$ and $G_0'(x)$ are finite for $x \in {\cal I}$.  It then follows that $\lim_{n\rightarrow \infty} AG_n(x) = AG_0(x)$ for $x \in {\cal I}$.  A careful examination of the convergence at the boundaries is required.

Assume $c_0$ is finite at the boundaries.  When $|G_0(a)| < \infty$, \cndref{AG0-unif-int}(c) implies $\lim_{x\rightarrow a} \sigma(x) G_0'(x) = K_1$ for some finite $K_1$.  Denote this limit by $\sigma(a) G_0'(a)$ and set $AG_0(a) = F_0^* - c_0(a)$.  Then 
$$\lim_{x\rightarrow a} AG_n(x) = AG_n^{(1)}(a) + e_n^{(2)}(a) + e_n^{(3)}(a) =: AG_n(a).$$
As before, it follows that $\lim_{n\rightarrow \infty} AG_n(a) = AG_0(a)$.  A similar analysis applies at the boundary $b$ when $|G_0(b)| < \infty$.  Observe that, under the assumption that $c_0$ is finite at the boundaries, $G_0$ is bounded at the boundaries when $a$ is attainable and when $b$ is an entrance boundary.

Now consider the case in which $c_0(a) < \infty$ and $|G_0(a)| = \infty$.  Thus $a$ is a natural boundary and hence \cndref{extra-cnd} implies that $G_0(a) = -\infty$.  Then for some $y_1 > a$, $G_0(x) \leq -1$ for all $a < x < y_1$ and as a result, $h(G_0(x)) = |G_0(x)|$ and $h''(G_0(x)) = 0$.  Examining the expression for $AG_n$, we see 
$$AG_n(x) = \frac{AG_0(x)}{(1 + \frac{1}{n} |G_0(x)|)^2} - \frac{(\sigma(x)G_0'(x))^{2}\mbox{ sgn}(G_0(x))}{n (1+ \frac{1}{n} |G_0(x)|)^{3}}.$$
Since $AG_0(x) = F_0^* - c_0(x)$, $c_0(a)<\infty$ and $|G_0(a)| = \infty$, the first term converges to $0$ as $x\rightarrow a$.  Using \cndref{AG0-unif-int}(a,ii), we have
$$\left|\frac{(\sigma(x)G_0'(x))^{2} \mbox{ sgn}(G_0(x))}{n (1+ \frac{1}{n} |G_0(x)|)^{3}}\right| \leq \frac{Ln^2}{(1+|G_0(x)|)^{1-\epsilon}}$$
so this term also converges to $0$ as $x\rightarrow a$.  Therefore $AG_n(a) = 0$ and it follows that $\lim_{n\rightarrow \infty} AG_n(a) = 0 > F_0^* - c_0(a) = AG_0(a)$.  A similar argument using \cndref{AG0-unif-int}(b,ii) establishes that $AG_n(b) \geq AG_0(b)$ for each $n$ when $|G_0(b)| = \infty$.
\end{proof}

\section{A Counter-intuitive Example} \label{appendix-C}
This appendix presents an example of an inventory model and a particular ordering policy for which the long-term average cost is finite, and hence $\{\mu_{0,t_j}\}$ is tight for any $\{t_j\}$ with $t_j\rightarrow \infty$, but the corresponding sequence $\{\mu_{1,t_j}\}$ is not tight.

Consider the classical drifted Brownian motion inventory model of \eqref{dbm-dyn} in  Section \ref{sect:examples}:
\begin{equation} \label{dbm-dyn-appendix}
X_0(t) = W(t) - t, \qquad t\geq 0,
\end{equation} where for notational simplicity,    the initial inventory level is $x_0=0$ and the drift and diffusion coefficients are $\mu=-1$ and $\sigma=1$.  The cost functions of \eqref{dbm-cost} are specified  as 
$$c_0(x) = 2|x|, \quad \forall x \in \RR, \qquad \mbox{and}\qquad c_1(y,z) = k_1 + (z-y), \quad \forall (y,z) \in \overline{\cal R}.$$

A special ordering policy $(\tau,Y)$ will now be described. It runs in cycles, each of which is composed of two phases.  For cycle $i=1,2,3,\ldots$, Phase 1 consists of using the $(0,1)$-ordering policy a total of $2^{i-1}$ times; the length of each sub-cycle is a random variable having mean $1$.  Phase 2 involves a single $(0,2^{(i-1)/2})$-ordering policy followed immediately by using the $(2^{(i-1)/2},2^{(i-1)/2})$-ordering policy $2^{i-1}$ times.

The formal description of the ordering policy is now given. 

\begin{defn}[{\em The Policy}\/ $(\tau,Y)$] \label{stoch-tauY-def}
For cycle $i=1$, define 
$$\left\{\begin{array}{rcl}
\tau_{1,1} &=& 0, \\ Y_{1,1} &=& 1, \end{array} \right. \qquad \mbox{and} \qquad 
\left\{\begin{array}{rcl}
\tau_{1,2} &=& \inf\{t\geq \tau_{1,1}: X(t) = 0\}, \\ Y_{1,2} &=& 1, \end{array} \right. \qquad 
\left\{\begin{array}{rcl}
\tau_{1,3} &=& \tau_{1,2}, \\ Y_{1,3} &=& 0, \end{array} \right.$$
and for cycle $i = 2,3,4,\ldots$, define the orders in Phase 1 to be 
\begin{align*}
\begin{cases} 
\tau_{i,1} = \inf\{t\geq \tau_{i-1,2^i+1}: X(t) = 0\}, \\ Y_{i,1} = 1, \end{cases}     
\!\!  \begin{cases} 
\tau_{i,j}  =  \inf\{t\geq \tau_{i,j-1}: X(t) = 0\}, \\ Y_{i,j}  =  1, \end{cases}  &   j=2, \ldots, {2^{i-1}}, 
\end{align*}
and the orders in Phase 2 by 
\begin{align*}
\begin{cases} 
\tau_{i,2^{i-1}+1} = \inf\{t\geq \tau_{i,2^{i-1}}: X(t) = 0\}, \\ Y_{i,2^{i-1}+1} = 2^{(i-1)/2}, \end{cases}  
\begin{cases} 
\tau_{i,j}   =  \tau_{i,2^{i-1}+1},  \\
Y_{i,j} = 0, \end{cases}  & j = 2^{i-1}+2, \ldots, 2^i+1.
\end{align*}
\end{defn}

\begin{rem} \label{no-0-size-order}
When formulating the ordering costs, traditionally no distinction is made between not ordering and ordering nothing, with no cost incurred in either case.  In contrast, the formulation in this paper has orders of size $0$ incur the fixed cost $k_1$.  The policy $(\tau,Y)$ uses $0$-size orders to create non-trivial masses in the average ordering measure $\mu_{1,t}$ at arbitrarily large values on the diagonal $z=y$ without affecting the length of Phase 2, provided $t$ is sufficiently large.  Under the traditional formulation, it is possible to place many orders of suitably small sizes, resulting in similar masses in neighbourhoods near the diagonal at arbitrary distances from the origin (with $t$ large), such that the length of Phase 2 is barely increased.  The analysis is essentially the same as in this manuscript but requires more careful bookkeeping without affecting the limiting results.  The costly $0$-size orders considerably simplify the computations.
\end{rem}

The main result of this appendix can now be stated. 

\begin{thm}
For the drifted Brownian motion inventory model, let $(\tau,Y)$ be the ordering policy of \defref{stoch-tauY-def}, $X$ be the resulting inventory process, and $\{\mu_{0,t}\}$ and $\{\mu_{1,t}\}$, respectively, be the corresponding average expected occupation and ordering measures defined in \eqref{mus-t-def}.  Then 
(a) $J_0(\tau,Y) < \infty$; (b) $\{\mu_{0,t}: t > 0\}$ is tight as $t\rightarrow \infty$; and (c) $\{\mu_{1,t}: t > 0\}$ is not tight.
\end{thm}

\begin{proof}
This theorem is proven in pieces.  \propref{lta-holding-costs} shows that the long-term average holding costs are bounded and thus an argument as in the proof of \propref{mu0-tightness} establishes the tightness of the average expected occupation measures $\{\mu_{0,t}\}$ as $t\rightarrow \infty$.  Finally, \propref{ordering-measure-results} shows both that the long-term average ordering costs are finite and that the average expected ordering measures $\{\mu_{1,t}\}$ are not tight.
\end{proof}

Our analysis depends on a careful construction of the inventory process $X$ under the ordering policy $(\tau,Y)$ of \defref{stoch-tauY-def}. Independent copies of the diffusion $X_0$ of \eqref{dbm-dyn} are pieced together at the jump times (see, e.g., the appendix of \cite{chri:14} for such a construction) with the implication that the various ordering sub-cycles are independent.  

The initial analysis examines the long-term average cost of the $(\tau,Y)$ policy.  It begins by focusing on the holding costs.  Notice that the only orders which affect the length of cycle $i$ are the $2^{i-1}$ times that the $(0,1)$ policy is used and the one time that the $(0,2^{(i-1)/2})$ policy occurs; the $2^{i-1}$ times that orders of size $0$ are placed do not change the state of the inventory or lengthen the sub-cycles so have no affect on the holding costs.  Thus it is sufficient to restrict the analysis solely to the non-zero orders.

Let ${\cal S}:=\{\sigma_1, \sigma_2,\sigma_3, \ldots\}$ denote the times of the non-zero orders.  Let $i\in \NN$ denote the cycle and $j\in \{1,2,\ldots,2^{i-1}+1\}$ be the number of the non-zero order within cycle $i$.  Then observe that $\sigma_n = \tau_{i,j}$ where $n=2^{i-1}+i+j-2$.  The ensuing computations are simplified by a shift in the index for $j$.  For $i\geq 2$, define $\tau_{i,0} = \tau_{i-1,2^{i-1}+1}$ so that order number ``zero'' of the $i^{\mbox{\footnotesize th}}$ cycle is the last non-zero order, in fact the large order, of cycle $(i-1)$.  

Our first result gives a strong law of large numbers result for the cycle lengths.

\begin{prop} \label{slln}
Let $(\tau,Y)$ be the ordering policy of \defref{stoch-tauY-def} and ${\cal S}$ be the times of the non-zero orders.  Then
\begin{equation} \label{slln-sigma-n}
\lim_{n\rightarrow \infty} \frac{\sigma_n}{n} = 1 \; (a.s.).
\end{equation}
\end{prop} 

\begin{proof}
Define the independent (but not identically distributed) random variables 
$$\beta_0 = \sigma_1 = 0, \quad \mbox{and} \quad \beta_k = \sigma_{k+1} - \sigma_k, \qquad k\in \NN;$$  
thus $\beta_k$ gives the random length of time of the $k^{\mbox{\footnotesize th}}$ inter-order interval.  Then,  
$\sigma_n = \sum_{k=0}^{n-1} \beta_k, $  $  n \in \NN.$

Notice that, apart from $\beta_0=0$, $\beta_n$ is either the length of a $(0,1)$ sub-cycle or a $(0,2^{(i-1)/2})$ sub-cycle.  More precisely, for each $i\ge 1$, $\beta_{2^{i}+i-1} $ is the length of the cycle arising from the $(0,2^{(i-1)/2})$ sub-cycle and for $n\neq 2^i-1+i$, $\beta_n$ is the length of a $(0,1)$ sub-cycle.  Using the Laplace transform of the hitting time of a drifted Brownian motion process (see Formula 2.0.1 (p.\;295) of \cite{boro:02}), one can determine that 
$$\begin{array}{lclclcll} 
\EE[\beta_{2^{i-1}+i+j-2}]&=& 1, & \mbox{and} & \mbox{Var}(\beta_{2^{i-1}+i+j-2})&=&1, & \; j=1,2,\ldots, 2^{i-1},\/ i\ge 1, \\
\EE[\beta_{2^{i}+i-1}]&=& 2^{(i-1)/2}, & \mbox{and} & \mbox{Var}(\beta_{2^{i}+i-1})&=& 2^{(i-1)/2}, & \;  i\geq 1.
\end{array}$$
Next observe 
\begin{align*}
\sum_{n=2}^\infty \frac{\mbox{Var}(\beta_n)}{n^2} &= 
\sum_{i=2}^\infty \sum_{j=0}^{2^{i-1}}\frac{\mbox{Var}(\beta_{2^{i-1}+i+j-2})}{(2^{i-1}+i+j-2)^2} \\
& \leq \sum_{i=1}^\infty \sum_{j=1}^{2^{i-1}} \frac{1}{(2^{i-1}+i+j-2)^2} + \sum_{i=1}^\infty \frac{2^{(i-1)/2}}{(2^{i-1}+i-1)^2} \\
&\leq \sum_{n=1}^\infty \frac{1}{n^2} + \sum_{i=1}^\infty \frac{1}{2^{3(i-1)/2}} < \infty.
\end{align*} 
By Kolmogorov's Strong Law of Large Numbers (cf.\ Theorem 2 (p.\;389) of \cite{shir:96}), it follows that $$\lim_{n\to\infty} \biggl(\frac{1}{n} \sum_{k=1}^n \beta_k - \overline{\mu}_n \biggr) =0,  \quad a.s.,$$  in which $\overline{\mu}_n = \frac{1}{n} \sum_{k=1}^n \EE[\beta_k]$.  Note that $\sigma_{n+1} =  \sum_{k=0}^n \beta_k  = \sum_{k=1}^n \beta_k $. Thus the above equation can be rewritten as  $\lim_{n\to \infty}  ( \frac{\sigma_{n+1}}{n } - \overline{\mu}_{n}  )  =0 $ a.s., which in turn implies  
  \eqref{slln-sigma-n}  if  we can show that $\overline{\mu}_n \rightarrow 1$ as $n\rightarrow \infty$.

We now analyze the convergence of $\overline{\mu}_n$.  Again for $n\geq 2$, write $n=2^{i-1}+i+j-2$ with $i \ge 2$ and $0\leq j \leq 2^{i-1}$.  Notice that for $1 \leq k \leq i-1$, cycle $k$ contains $2^{k-1}$ sub-cycles generated by $(0,1)$ ordering policies having a mean length of $1$ and a single $(0,2^{(k-1)/2})$ sub-cycle with mean length $2^{(k-1)/2}$ while the partial cycle $i$ has $j$ sub-cycles from $(0,1)$, 
Thus for $n \geq 2$, 
\begin{align*}
\mbox{$\frac{1}{n}$} \sum_{k=1}^n \EE[\beta_k] &
= \frac{1}{2^{i-1}+i+j-2} \left(\sum_{k=1}^{i-1} (2^{k-1} + 2^{(k-1)/2}) + j\right) \\
&= \frac{1}{2^{i-1}+i+j-2} \left(2^{i-1} - 1 +\frac{2^{(i-1)/2}-1}{2^{1/2}-1} + j\right)  \\
& = \frac{1 -\frac{1}{2^{i-1}+j} + \frac{1}{2^{1/2}-1} \cdot\frac{2^{(i-1)/2}-1}{2^{i-1}+j} }{1 + \frac{i-2}{2^{i-1}+j}}.
\end{align*} 
Obviously we have $$\lim_{i\to\infty}\frac{1}{2^{i-1}+j}=\lim_{i\to\infty}\frac{2^{(i-1)/2}-1}{2^{i-1}+j} =\lim_{i\to\infty} \frac{i-2}{2^{i-1}+j} =0 \text{ for each }j =0, 1, \dots, 2^{i-1}.$$ 
Therefore it follows that as $n\to\infty$ (and hence   $i\rightarrow \infty$),  $\frac{1}{n} \sum_{k=1}^n \EE[\beta_k] $ converges to 1.
\end{proof}

For this model, we now establish a variant of the elementary renewal theorem; the fact that the cycles are not identically distributed means that the theorem cannot simply be applied.

\begin{prop} \label{elementary-renewal}
For $t \geq 0$, let $N(t) = \sum_{i=1}^\infty I_{\{\sigma_i \leq t\}} = \max\{n: \sigma_n \leq t\}$ be  the number of orders of positive size by time $t$.  Then 
\begin{equation}
\label{eq-Nt/t conv}
\lim_{t\rightarrow \infty} \frac{N(t)}{t} = 1,\quad  (\text{a.s. and in } L^{1}).
\end{equation}
\end{prop}

\begin{proof}
First by the definition of $N(t)$, $\sigma_{N(t)} \leq t < \sigma_{N(t)+1}$ for each $t \geq 0$.  Thus for each $t \geq 0$,
$$\frac{\sigma_{N(t)}}{N(t)} \leq \frac{t}{N(t)} < \frac{\sigma_{N(t)+1}}{N(t)} = \frac{N(t)+1}{N(t)} \cdot \frac{\sigma_{N(t)+1}}{N(t)+1}.$$
As $t\rightarrow \infty$, \propref{slln} implies first that $N(t) \rightarrow \infty\; (a.s.)$ and then establishes the a.s. convergence of \eqref{eq-Nt/t conv}. 
In order to prove 
the $L^{1}$ convergence of \eqref{eq-Nt/t conv},  it is necessary to show that the collection $\{\frac{N(t)}{t}: t\geq 1\}$ is uniformly integrable.  By Lemma~3 (p.\ 190) of \cite{shir:96}, it suffices to show that $\sup_{t\geq 1} \EE[(\frac{N(t)}{t})^2] < \infty$.  To this end, define the ordering policy $(\wdt\tau,\wdt Y)$ which always uses the $(0,1)$ policy and denote by  $\{\wdt{\sigma}_n: n\in\NN\}$  the ordering times.  Define the corresponding renewal process $\wdt N$ by  
$\wdt N(t) = \max\{n: \wdt\sigma_n \leq t\} = \sum_{i=1}^\infty I_{\{\wdt\sigma_i \leq t\}}.$
It then follows that $N(t) \leq \wdt N(t)$ for each $t \geq 0$ and hence
$\EE [ (\frac{N(t)}{t} )^2 ] \leq \EE [ (\frac{\wdt N(t)}{t} )^2 ].$
Using a standard renewal argument (see, e.g., the proof of Theorem~5.5.2 (pp.\;143,144) of \cite{chun:01}), it follows that $ \EE [ (\frac{\wdt N(t)}{t} )^2 ]$ 
is uniformly bounded for $t \geq 1$, establishing the uniform integrability of $\{\frac{N(t)}{t}: t \geq 1\}$ and hence   the $L^{1}$ convergence. 
\end{proof}

The next step on the way to showing $J_0(\tau,Y) < \infty$ is to analyze the holding costs over a single cycle.  Observe that $X(\sigma_i)$ is either $1$ or $2^{(k-1)/2}$ for some $k$; to simplify notation, let $z$ represent either value.  Next, $X(t) = z - (t-\sigma_i) + W(t-\sigma_i)$ for $t \in [\sigma_i,\sigma_{i+1})$ since no orders are placed on the interval $(\sigma_i,\sigma_{i+1})$ and, by the definition of $\sigma_{i+1}$, $z - (\sigma_{i+1}-\sigma_i) + W(\sigma_{i+1}-\sigma_i) = 0$.  Again to simplify notation, make the change of time $s=t-\sigma_i$ and define $\tau=\sigma_{i+1}-\sigma_i$.  Thus, $X$ satisfies $X(s) = z - s + W(s)$ for $s\in [0,\tau)$.  


Define 
\begin{equation} \label{cycle-holding-cost}
\Theta_\tau = \int_0^\tau c_0(X(s))\, ds.
\end{equation}
Then by Proposition~2.6 of \cite{helm:15b}, it follows that $\EE[\Theta_\tau] = z^2 + z$. 

We now establish a result similar to the law of large numbers for the holding costs.  

\begin{prop} \label{cycle-cost-asymptotics}
Let $(\tau,Y)$ be given by \defref{stoch-tauY-def} 
and $X$   the resulting inventory process. 
 Then
\begin{equation}
\label{eq-limsup-holding-cost}
 \limsup_{n\rightarrow \infty} \mbox{$\frac{1}{n}$} \sum_{k=1}^{n} \EE\left[\int_{\sigma_{k}}^{\sigma_{k+1}} c_0(X(s))\, ds\right] \le  3.
\end{equation}
\end{prop}

\begin{proof} For simplicity of notation, denote $\Theta_{k} : = \int_{\sigma_{k}}^{\sigma_{k+1}} c_0(X(s))\, ds $ for $k =1, 2, \dots$
As in the proof of \propref{slln}, write the index $n$ as $n=2^{i-1}+i+j-2$ in which $i=2,3,\ldots$ and $j=0,1,\ldots,2^{i-1}$.  Recall, the large orders have indices with $j=0$ and hence $n = 2^{i-1} + i -2 $ for $i \ge 2$ so the formula for $\EE[\Theta_\tau]$ above establishes that 
\begin{align*}
\EE[\Theta_{2^{i-1}+i+j-2}] = 2, & \quad j=1,\ldots,2^{i-1},\;  \ \text{ and } \
\EE[\Theta_{2^{i}+i-1}] = 2^{i-1} + 2^{(i-1)/2}, & \quad  i\in \NN.
\end{align*}
Consequently, for $n\geq 2$, 
\begin{align*}
 \mbox{$\frac1n$} \sum_{\ell=1}^{n} \EE[\Theta_{\ell}] 
 &  =  \frac{1}{2^{i-1}+i+j-2}   \left(\sum_{k=1}^{i-1} (2^{k-1}\cdot 2 + 2^{k-1} + 2^{(k-1)/2}) +j \cdot 2\right)  \\
 & =  \frac{1}{2^{i-1}+i+j-2} \left(3(2^{i-1} -1) +  \frac{2^{(i-1)/2}-1}{2^{1/2} -1} + 2 j \right) \\
 & \le   \frac{3 ( 2^{i-1}  +j ) + \frac{2^{(i-1)/2}-1}{2^{1/2} -1}-3}{2^{i-1}+i+j-2} 
  = \frac{3 + \frac{1}{2^{1/2}-1} \cdot\frac{2^{(i-1)/2}-1}{2^{i-1}+ j} - \frac{3}{2^{i-1}+ j}} {1 + \frac{i-2}{ 2^{i-1} + j}}.
\end{align*} Using similar computations as those in the end of   the proof of Proposition \ref{slln}, we see immediately that this ratio converges to 3 as $n\to \infty $ and hence $i \to \infty$. This gives \eqref{eq-limsup-holding-cost} as desired. 
\end{proof}

We now parlay the asymptotics relative to cycles to verify that the long-term average holding costs related to $(\tau,Y)$ are finite.
\begin{prop} \label{lta-holding-costs}
Let $(\tau,Y)$ be given by \defref{stoch-tauY-def} 
and $X$   the resulting inventory process. 
 Then
$$\limsup_{t\rightarrow \infty} \mbox{$\frac{1}{t}$} \EE\left[\int_0^t c_0(X(s))\, ds\right] \leq 3.$$
\end{prop}

\begin{proof}
Again, denote the non-zero ordering times by ${\cal S}$ and observe that the orders of size $0$ do not affect the cycle lengths.  Again, for each $t \geq 0$, recall $N(t) = \max\{n: \sigma_n \leq t\}$ so that $\sigma_{N(t)} \leq t < \sigma_{N(t)+1}$.  Thus for positive $t$,
\begin{eqnarray*}
\mbox{$\frac{1}{t}$} \int_0^t c_0(X(s))\, ds &\leq& \mbox{$\frac{1}{t}$} \int_0^{\sigma_{N(t)+1}} c_0(X(s))\, ds  
 \leq  \mbox{$\frac{1}{t}$} \int_0^{\sigma_{N(t)+2}} c_0(X(s))\, ds =  \mbox{$\frac{1}{t}$} \sum_{j=1}^{N(t)+1} \Theta_{j}, 
\end{eqnarray*}
where as in the proof of \propref{cycle-cost-asymptotics}, we employed the notation $\Theta_j = \int_{\sigma_j}^{\sigma_{j+1}} c_0(X(s))\, ds$ for each $j \in \NN$.  Noting that $\{N(t)=0\} = \emptyset$ for each $t$, consider the expectation of the right-hand term above:
\begin{align*} \label{approx-wald} 
\EE\Bigg[\sum_{j=1}^{N(t)+1}\! \Theta_j\Bigg] &= \EE\Bigg[\sum_{k=1}^\infty I_{\{N(t)=k\}}\! \sum_{j=1}^{k+1} \Theta_j\Bigg]  
 =  \EE\Bigg[\sum_{j=1}^\infty \Theta_j\sum_{k=j-1}^\infty I_{\{N(t)=k\}}\Bigg] = \sum_{j=1}^\infty \EE\big[\Theta_j I_{\{N(t) \geq j-1\}}\big].
\end{align*}
Observe that $\{N(t) \geq j-1\} = \{N(t)<j-1\}^c = \{\sigma_{j-1} > t\}^c$ is independent of the process over the interval $[\sigma_j,\sigma_{j+1})$.  Therefore
\begin{align*}
\sum_{j=1}^\infty \EE\left[\Theta_j I_{\{N(t) \geq j-1\}}\right] &= \sum_{j=1}^\infty \EE[\Theta_j] \EE[I_{\{N(t) \geq j-1\}}] 
= \sum_{j=1}^\infty \EE[\Theta_j] \left(\sum_{k=j-1}^\infty \PP(N(t)=k)\right) \\
&=\sum_{k=1}^\infty \sum_{j=1}^{k+1} \EE[\Theta_j] \PP(N(t)=k).
\end{align*}
By \propref{cycle-cost-asymptotics}, $\limsup_{k\rightarrow \infty} \frac{1}{k}\sum_{j=1}^{k} \EE[\Theta_j] \le 3$ so for any $\epsilon > 0$ there exists some $K_0 < \infty$ such that $\frac{1}{k+1} \sum_{j=1}^{k+1} \EE[\Theta_j] <   3+ \epsilon$ for all $k \geq K_0$.  Thus
 \begin{align*}
&\sum_{k=1}^\infty \sum_{j=1}^{k+1} \EE[\Theta_j] \PP(N(t)=k)  \\
 &\ \ =  \sum_{k=1}^{K_0-1}   \sum_{j=1}^{k+1} \EE[\Theta_j] \PP(N(t)=k) +  \sum_{k=K_0}^\infty \biggl(\mbox{$\frac{1}{k+1}$} \sum_{j=1}^{k+1} \EE[\Theta_j]\biggr) (k+1) \PP(N(t)=k) \\
&\ \ \leq \sum_{k=1}^{K_0-1}  \sum_{j=1}^{k+1} \EE[\Theta_j]  \PP(N(t)=k)  +\sum_{k=K_0}^\infty (3 + \epsilon) (k+1) \PP(N(t)=k) \\
&\ \ \leq \sum_{k=1}^{K_0-1} \sum_{j=1}^{k+1} \EE[\Theta_j] + (3 + \epsilon)\EE[N(t)] +3 + \epsilon.
\end{align*}Since the first and last summands are constant, combining these upper bounds, dividing by $t$ and using \propref{elementary-renewal} yields
$$\limsup_{t\rightarrow \infty} \mbox{$\frac{1}{t}$}\EE\left[\int_0^t c_0(X(s))\,ds\right] \leq \lim_{t\rightarrow \infty} \frac{(3 + \epsilon)\EE[N(t)]}{t} =3 + \epsilon.$$
The result now follows since $\epsilon> 0$ is arbitrary.\end{proof}

The final task is to verify that the long-term average ordering costs are finite and that $\{\mu_{1,t}\}$ is not tight as $t\rightarrow \infty$.  The next proposition addresses both of these concerns since the analysis is very similar.

\begin{prop} \label{ordering-measure-results}
Let $(\tau,Y)$ be given by \defref{stoch-tauY-def} for the drifted Brownian motion inventory model.  For $t > 0$, define $\mu_{1,t}$ by \eqref{mus-t-def}.  Then 
\begin{equation}
\label{eq-limsup-order-cost}
 \limsup_{t\rightarrow \infty} \int c_1(y,z)\, \mu_{1,t}(dy\times dz) \le 3k_1+2 
\end{equation}
and $\{\mu_{1,t}: t > 1\}$ is not tight. 
\end{prop}

\begin{proof}
We first address the lack of tightness for $\{\mu_{1,t}\}$.  Let $\Gamma\subset \overline{\cal R}$ be any compact set.  Then there exists some $N_0$ such that for all $i \geq N_0$, $(2^{(i-1)/2},2^{(i-1)/2}) \in \Gamma^c$.  

Again, denote the times of non-zero orders by ${\cal S} = \{\sigma_n: n\in \NN\}$ and for $n\geq 2$, write $n=2^{i-1}+i+j-2$ with $i \ge 2$ and $j=0,\ldots,2^{i-1}$.  Recall, the ``large'' order of size $2^{(i-1)/2}$ occurs at time $\sigma_{2^i+i-1}$.  Under policy $(\tau,Y)$, there are a further $2^{i-1}$ orders of size $0$ at time $\sigma_{2^i+i-1}$; denote this common time of ordering by $\wdt\sigma_{2^i+i-1,j}$ for $j=1,\ldots,2^{i-1}$ for each for the $0$-size orders.  Thus
\begin{align*}
\mu_{1,t}(\Gamma^c) &= \mbox{$\frac{1}{t}$} \EE\Bigg[\sum_{n=1}^\infty I_{\{\sigma_n \leq t\}} I_{\Gamma^c}(X(\sigma_n-),X(\sigma_n))  \\
& \qquad +\; \sum_{i=1}^\infty \sum_{j=1}^{2^{i-1}} I_{\{\wdt\sigma_{2^i+i-1,j}\leq t\}} I_{\Gamma^c}(X(\wdt\sigma_{2^i+i-1,j}-),X(\wdt\sigma_{2^i+i-1,j})) \Biggr] \\
&\geq \mbox{$\frac{1}{t}$} \EE\Bigg[\sum_{i=N_0}^\infty \sum_{j=1}^{2^{i-1}} I_{\{\wdt\sigma_{2^i+i-1,j}\leq t\}} I_{\Gamma^c}(2^{(i-1)/2},2^{(i-1)/2}) \Bigg] 
= \mbox{$\frac{1}{t}$} \EE\Bigg[\sum_{i=N_0}^\infty 2^{i-1} I_{\{\sigma_{2^i+i-1}\leq t\}} \Bigg].
\end{align*}
For each $t \geq 0$, define the processes $I$ and $J$ such that $N(t) = 2^{I(t)-1}+I(t)+J(t)-2$ in which $I(t)$ denotes the cycle in which order $N(t)$ occurs and $0 \leq J(t) \leq 2^{I(t)-1}$.  Since $N(t)$ is the number of non-zero orders placed by time $t\geq 0$, order number $N(t)$ is the $J(t)^{\mbox{\footnotesize th}}$ order within cycle $I(t)$; again $J(t)=0$ corresponds to the large order of the previous cycle.  Using the processes $I$ and $J$, it follows that for $0 \leq J(t) \leq 2^{I(t)-1}$,
\begin{align} \label{mu1-lb} \nonumber 
\mu_{1,t}(\Gamma^c) &\geq \mbox{$\frac{1}{t}$} \EE\Bigg[\sum_{i=N_0}^\infty 2^{i-1} I_{\{\sigma_{2^i+i-1}\leq t\}} \Bigg] &&\hspace{-35pt} \geq \mbox{$\frac{1}{t}$} \EE\Bigg[\sum_{\ell=2^{N_0}+N_0-1}^{I(t)-1} 2^{\ell-1}\Bigg] \\ 
&= \mbox{$\frac{1}{t}$} \EE\Bigg[\sum_{\ell=1}^{I(t)-1} 2^{\ell-1} - \sum_{\ell=1}^{2^{N_0}+N_0-2} 2^{\ell-1}\Bigg] 
&&\hspace{-35pt}= \mbox{$\frac{1}{t}$} \left(\EE\left[2^{I(t)-1}\right] - 2^{2^{N_0}+N_0-2}\right).
\end{align}
 By \lemref{elementary-renewal}, $N(t) \rightarrow \infty\; (a.s.)$ as $t\rightarrow \infty$ so $I(t) \rightarrow \infty$ as well.  Thus the asymptotics of $\mu_{1,t}(\Gamma^c)$ is determined by the asymptotics of the first summand above. 

We next determine bounds on $I(t)$.  Since $J(t) \leq 2^{I(t)-1}$ and $N(t) = 2^{I(t)-1}+I(t)+J(t)-2$, we have $N(t) \leq 2^{I(t)}+I(t) -2$ and hence 
\begin{equation} \label{bd1}
2^{I(t)-1} \geq \mbox{$\frac{1}{2}$} (N(t) - I(t) +2).
\end{equation}
Since $I(t)\ge 1 $ and  $J(t) \ge  0$, $N(t) \ge  2^{I(t)-1}-1$ so 
\begin{equation} \label{I-minus-one-lb}
I(t)-1 < \log_2(N(t)+1) \leq \log_2(N(t)+N(t)) = \mbox{$\log_2(\frac{N(t)\cdot 2t}{t}) = \log_2(\frac{N(t)}{t}) + \log_2(2t)$}.
\end{equation}
Using this estimate in \eqref{bd1} yields
$$2^{I(t)-1} > \mbox{$\frac{1}{2}$} (N(t) - \log_2(\mbox{$\frac{N(t)}{t}$}) - \log_2(2t)+ 1).$$
Employing this lower bound in \eqref{mu1-lb} and Jensen's inequality on the second summand, we have
$$\mu_{1,t}(\Gamma^c) \geq \mbox{$\frac{1}{2}$} \cdot \left(\frac{\EE[N(t)]}{t} - \frac{\log_2(\EE[\frac{N(t)}{t}])}{t} - \frac{\log_2(2t)}{t}\right) \stackrel{t\rightarrow \infty}{\longrightarrow} \mbox{$\frac{1}{2}$};$$
note that we have also used \propref{elementary-renewal} on the second summand.  Therefore for any $\epsilon < \frac{1}{2}$, $\mu_{1,t}(
\Gamma^c) > \epsilon$ for all $t$ sufficiently large.  Hence $\{\mu_{1,t}\}$ is not tight as $t\rightarrow \infty$.

Consider now the total ordering costs by time $t > 0$.  First, denote $(\tau,Y) = \{(\tau_k,Y_k): k \in \NN\}$ to capture all of the orders.  Next, denote the non-zero orders by ${\cal S}$ and as above, for each $i \in \NN$ and $j=1,\ldots, 2^{i-1}$, let $\wdt\sigma_{2^i+i-1,j} = \sigma_{2^i+i-1}$ be the common time of the $0$-size orders in cycle $i$.  

Since $t$ is finite and $0 \leq J(t) \leq 2^{I(t)-1} $, 
\begin{align*}
\lefteqn{\EE\left[\sum_{k=1}^\infty I_{\{\tau_k\leq t\}} c_1(X(\tau_k-),X(\tau_k))\right]} \\
   & =  \EE\left[\sum_{i=1}^{I(t)-1} \left[2^{i-1}(k_1+1)  + (k_1 + 2^{(i-1)/2}) + 2^{i-1} k_{1}\right] + J(t) (k_{1}+1)\right]\\ 
    & \le     \EE\left[\left( (2k_1+1) (2^{I(t)-1}-1) + \frac{2^{(I(t)-1)/2}-1}{2^{1/2}-1} + k_1 (I(t)-1)\right)+ 2^{I(t) -1} (k_{1}+1)\right]\\
&=  \EE\left[(3k_1+2) 2^{I(t)-1}  + \frac{2^{(I(t)-1)/2}-1}{2^{1/2}-1} + k_1 I(t)  -3 k_{1} -1 \right].
\end{align*}
Since $I(t) \ge 1$ and $ J(t) \geq 0$, it follows that $2^{I(t)-1}    \le N(t) + 1 $ and so $2^{(I(t)-1) /2} \le (N(t)+1)^{1/2}$.  Using \eqref{I-minus-one-lb}, we also have $I(t) <  \log_{2}(\frac{N(t)}{t}) + \log_{2}(2t) + 1$. 
Then it follows from Jensen's inequality  that \begin{align*}
\EE&\left[\sum_{k=1}^\infty I_{\{\tau_k\leq t\}} c_1(X(\tau_k-),X(\tau_k))\right] \\
      & \le  (3k_{1} +2) \EE[N(t)]    + \frac{     \EE[(N(t)+1)^{1/2}]-1}{2^{1/2}-1}  + k_{1} \EE\left[ \log_{2}(N(t)/t ) + \log_{2}(2t) + 1\right] + 1 \\ 
      & \le   (3k_{1} +2) \EE[N(t)]    + \frac{   (\EE[N(t)+1])^{1/2} - 1} {2^{1/2}-1}    + k_{1}\big [ \log_{2} (\EE[N(t)/t] ) + \log_{2}(2t) + 1\big] + 1. 
\end{align*}
Now divide both sides by $t$, and then send $t\to \infty$, obtaining \eqref{eq-limsup-order-cost}  from Proposition \ref{elementary-renewal}.
\end{proof}

\begin{rem}[Final Comments]
Under further analysis, one can obtain more precise results about the costs related to the ordering policy $(\tau,Y)$ of \defref{stoch-tauY-def}.  As in the analysis of \propref{cycle-cost-asymptotics}, a lower bound on the limit inferior of the Ces\`{a}ro mean of the expected cycle costs can be shown to be $\frac{5}{2}$.  With more extensive calculations including the variance of the holding costs per cycle, these bounds can be shown to be tight and moreover that 
$$\liminf_{t\rightarrow \infty} \mbox{$\frac{1}{t}$} \int_0^t c_0(X(s))\, ds = \mbox{$\frac{5}{2}$} \; (a.s.) \quad \mbox{and} \quad \limsup_{t\rightarrow \infty} \mbox{$\frac{1}{t}$} \int_0^t c_0(X(s))\, ds = 3 \; (a.s.).$$
\end{rem}

\end{appendices}
\bibliographystyle{apalike}

\end{document}